\newtheorem{definition}{Definition}[section]
\newtheorem{theorem}[definition]{Theorem}
\newtheorem{corollary}[definition]{Corollary}
\newtheorem{lemma}[definition]{Lemma}
\newtheorem{remark}[definition]{Remark}
\newcommand{\defas}{:=}
\newcommand{\ind}{\chi}
\newcommand{\Tc}{\mathcal{T}}
\newcommand{\Ec}{\mathcal{E}}
\newcommand{\Vc}{\mathcal{V}}
\newcommand{\Ic}{\mathcal{I}}
\newcommand{\Sc}{\mathcal{S}}
\newcommand{\neumannSpace}{\Vc}
\newcommand{\nlOp}{\mathcal{L}}
\newcommand{\neumannOp}{\mathcal{N}}
\newcommand{\neumannVarOp}{\mathcal{B}}
\newcommand{\nlDom}{\Omega}
\newcommand{\nlBound}{\Ic}
\newcommand{\extnlBound}{\hat{\nlBound}}
\newcommand{\completeDom}{\nlDom \cup \nlBound}
\newcommand{\varOp}{A}
\newcommand{\varForce}{F}
\newcommand{\xb}{\mathbf{x}}
\newcommand{\yb}{\mathbf{y}}
\newcommand{\zb}{\mathbf{z}}
\newcommand{\Nbb}{\mathbb{N}}
\newcommand{\kernel}{\gamma}
\newcommand{\kernelij}{\gamma_{ij}}
\newcommand{\kerneliI}{\gamma_{i \Ic}}
\newcommand{\kernelIi}{\gamma_{\Ic i}}
\newcommand{\mbR}{\mathbb{R}}
\newcommand{\mbRd}{\mathbb{R}^d}
\newcommand{\advar}{v}
\newcommand{\trialSpace}{V}
\newcommand{\testSpace}{V_c}
\newcommand{\basisfun}{\phi}
\newcommand{\FEMatrix}{\mathbf{A}}
\newcommand{\FEMatrixEntry}{\mathbf{a}}
\newcommand{\FEForce}{\mathbf{f}}
\newcommand{\FEForceAlt}{\mathbf{b}}
\newcommand{\FEForceEntry}{\mathbf{f}}
\newcommand{\FEDirData}{\mathbf{g}}
\newcommand{\FEu}{\mathbf{u}}
\newcommand{\indexSet}{\mathfrak{J}}
\newcommand{\projection}{\mathcal{P}}
\newcommand{\polynom}{p}
\newcommand{\multiIndex}{\boldsymbol{\alpha}}
\newcommand{\schwarzOp}{\Sc}
\newcommand{\schwarzSubspace}{\mathbb{V}}
\newcommand{\weakSol}{u}
\newcommand{\precBGS}{\mathbf{M}_{BGS}}
\newcommand{\precBJ}{\mathbf{M}_{BJ}}
\title{\textbf{Schwarz Methods for Nonlocal Problems}}
\author{Matthias Schuster\thanks{Universitaet Trier, D-54286 Trier, Germany; Email: schusterm@uni-trier.de, admin@christianvollmann.de, volker.schulz@uni-trier.de}\hspace{2mm}\href{https://orcid.org/0000-0002-9355-1076}{\includegraphics[scale=0.06]{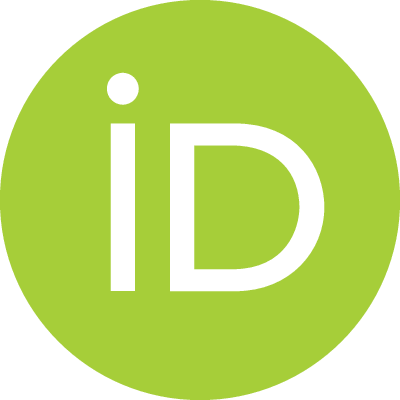}} \and Christian Vollmann\footnotemark[1]\hspace{2mm}\href{https://orcid.org/0000-0002-0351-0044}{\includegraphics[scale=0.06]{orcid.eps}} \and Volker Schulz\footnotemark[1]\hspace{2mm}\href{https://orcid.org/0000-0001-7665-130X}{\includegraphics[scale=0.06]{orcid.eps}} }
\date{}
\begin{document}
\maketitle
\small
\textbf{Abstract.}
The first domain decomposition methods for partial differential equations were already developed in 1870 by H. A. Schwarz. Here we consider a nonlocal Dirichlet problem with variable coefficients, where a nonlocal diffusion operator is used. We find that domain decomposition methods like the so-called Schwarz methods seem to be a natural way to solve these nonlocal problems. In this work we show the convergence for nonlocal problems, where specific symmetric kernels are employed, and present the implementation of the multiplicative and additive Schwarz algorithms in the above mentioned nonlocal setting.
~\\
\textbf{Keywords.} Schwarz methods, nonlocal diffusion, finite element method, domain decomposition.
\normalsize

\section{Introduction}
Nonlocal operators can be helpful to describe physical phenomena, where long range interactions occur. Therefore, in these cases, nonlocal models are better suited than the classical approach via partial differential equations. Nonlocal operators have already been applied in many fields, such as image denoising\cite{buades2010image, delia2021bilevel}, anomalous diffusion\cite{brockmann2008, suzuki2022, deliaAnomalousTransport}, peridynamics\cite{silling2000reformulation, javili2019peridynamics} and stochastic processes\cite{metzler2000random, delia2017nonlocal}, just to name a few.\\
Moreover, nonlocal interface problems as well as models that contain domain-dependent parameters have become popular in recent years, see e.g., \cite{shape_paper, capodaglio_energy_coupling, glusa2023asymptotically}. These domain-dependent parameters may naturally suggest a domain decomposition which leads to domain decomposition methods that are widely used in the case of partial differential equations and have been investigated in, e.g., \cite{mathewDD, toselli2004domain}.
In the matter of nonlocal equations the approach of finite element tearing and interconnecting (FETI) has been successfully applied as described in \cite{klar2023scalable, xu2021feti}. Another class of domain decomposition methods are Schwarz algorithms, that were initially developed in \cite{schwarz1869} for partial differential equations. An abstract framework for the Schwarz approach can be found in \cite{LionsSchwarz1, LionsSchwarz2, LionsSchwarz3}.\\
Although FETI is a very efficient way to solve nonlocal equations, Schwarz methods have several advantages. They are easier to implement and it is straightforward to incorporate black-box solvers, whereas for FETI the finite element formulation on overlapping domains needs to be changed. Moreover, we could observe that Schwarz methods can also converge in case of a nonsymmetric kernel as shown in an experiment in Chapter \ref{chap:patch_test_experiments}. Lastly, if the parameter $\delta$, which determines the the range of nonlocal interaction and thus the width of the nonlocal boundary regarding one (sub-)domain (see Section \ref{chap:problem_formulation}), is quite huge, FETI cannot decompose the domain in many subdomains. In this case, GMRES with a Schwarz preconditioner might be faster, especially if we include the set-up times of both solvers. \\
Schwarz methods have also been utilized to solve an energy-based Local-to-Nonlocal(LtN) coupling in \cite{AcostaDD}. Additionally, in \cite{AcostaDD} they showed the convergence of the multiplicative Schwarz method for this energy-based LtN coupling by applying \cite[Theorem I.1]{LionsSchwarz1}. In Section \ref{sec:well_posedness} we present the convergence of the multiplicative Schwarz method for two cases of nonlocal problems in a similar fashion by referring to \cite[Theorem I.2]{LionsSchwarz1}. Furthermore, we point out that the Schwarz formulation solves linear, quadratic and cubic patch tests. These patch tests often serve as a criterion to evaluate and therefore compare Local-to-Nonlocal couplings. We refer to the review paper \cite{coupling_review} for more information on LtN couplings. Additionally we test GMRES against two preconditioned versions of GMRES that result from the multiplicative and additive Schwarz method combined with the finite element method.\\
We start by introducing nonlocal Dirichlet problems and nonlocal problems that involve Neumann boundary conditions in Chapter \ref{chap:problem_formulation}. After that we describe the multiplicative and additive Schwarz method for the previously mentioned nonlocal problems in Chapter \ref{chap:schwarz_methods}. Next, in Chapter \ref{sec:well_posedness}, we prove the convergence of this multiplicative Schwarz method to the unique solution under the assumption that the kernel is symmetric. Furthermore, we present the finite element method as a way to numerically solve nonlocal problems in Chapter \ref{chap:finite_element_method}.
Here, we also illustrate how Schwarz methods are applied for this discrete framework. In Chapter \ref{chap:patch_tests} we shortly introduce patch tests, which are used to evaluate Local-to-Nonlocal or Nonlocal-to-Nonlocal couplings. In the last Chapter \ref{chap:numerical_experiments} we start by testing the Schwarz methods on two examples. The first one is a nonlocal Dirichlet problem, where we use a singular symmetric kernel, and the second example is a nonlocal problem with Neumann boundary, where we employ a constant kernel. Then, we investigate an example of a nonlocal Dirichlet problem with a nonsymmetric kernel that additionally satisfies the patch test. Lastly, we study two versions of preconditioned GMRES, that are connected to the Schwarz method for the finite element formulation, and we compare these two approaches to a GMRES without preconditioning.
 
\section{Problem Formulation}\label{chap:problem_formulation}
Let $\nlDom \subset \mbRd$ be an open and bounded domain and $\{\Omega_i\}_{i=1}^n$ be a partition of $\Omega$, where $\nlDom_i \subset \nlDom$ is open for $i=1,...,n$, and that satisfies
\begin{align} \label{decomposition}
    \overline{\nlDom} = \bigcup_{i=1}^n \overline{\Omega}_i \text{ and } \Omega_i \cap \Omega_j = \emptyset \text{ for } i,j = 1,...,n \text{ and } i \neq j.
\end{align}
From now on we denote by $\kernel:\mbRd \times \mbRd \rightarrow [0,\infty)$ a nonnegative (interaction) kernel and define the nonlocal boundary $\nlBound$ of $\Omega$ as follows 
\begin{align*}
 \nlBound \defas \{\yb \in \mbRd \setminus \nlDom: \int_{\nlDom} \kernel(\xb, \yb) ~d\xb > 0 \}.   
\end{align*}
Based on the partition $\{\Omega_i\}_{i=1}^n$ we can express the kernel $\kernel$ as  
\begin{align}\label{kernel_sum}
    \kernel = \sum_{i,j=1}^n \kernelij \ind_{\Omega_i \times \Omega_j} + \sum_{i=1}^n \kerneliI \ind_{\Omega_i \times \nlBound} + \sum_{i=1}^n \kernelIi \ind_{\nlBound \times \Omega_i},
\end{align}
with potentially different nonnegative kernels 
\begin{align*}
\kernelij:\Omega_i \times \Omega_j \rightarrow [0,\infty), \kerneliI:\Omega_i \times \nlBound \rightarrow [0,\infty) \text{ and } \kernelIi:\nlBound \times \Omega_i \rightarrow [0,\infty), \text{ for } i,j=1,...,n.
\end{align*}
Then, we define the nonlocal diffusion operator $-\nlOp$, which can be found in, e.g., \cite{delia2017nonlocal, delia_glusa_interface, DuAnalysis, vollmann_diss, vu_diss}, as
\begin{align}
\label{op:conv-diff}
    -\nlOp u(\xb) = \int\limits_{\mbRd} \left(u(\xb) - u(\yb) \right)\kernel(\xb,\yb) ~d\yb.
\end{align}
\subsection{Nonlocal Dirichlet Problems}
\label{sec:dirichlet_problem}
One class of problems, that we focus on, are nonhomogeneous steady-state Dirichlet problems with nonlocal boundary constraints given by
\begin{align}
	\begin{split}
		\label{nlProblem}
		-\nlOp u &= f \text{ on } \nlDom\\
		u &= g \text{ on } \nlBound, 
	\end{split}
\end{align}
where the forcing term $f$ may vary across the various parts of the decomposition \eqref{decomposition}, i.e., there exist possibly mutually different $f_i:\Omega_i \rightarrow \mbR, \text{ for } i=1,...,n,$ such that
\begin{align} \label{def:forcing_term}
    f = \sum_{i=1}^n f_i \ind_{\Omega_i}.
\end{align}
Similar types of nonlocal problems can also be found in, e.g., \cite{delia_glusa_interface, shape_paper}. Moreover, the kernel $\kernel$ is assumed to be symmetric and is supposed to fulfill the following conditions
\begin{itemize}
	\item[(K1)] There exist $\delta > 0$ and $\phi:\mbRd \times \mbRd \rightarrow [0,\infty )$ with $\kernel(\xb,\yb) = \phi(\xb,\yb) \ind_{B_\delta(\xb)}(\yb)$.
	\item[(K2)] There exist $0 <\kernel_0 < \infty$ and $\epsilon \in (0, \delta)$ such that $\kernel_0 \leq \kernel(\xb,\yb)$ for $\yb \in B_{\epsilon}(\xb)$ and $\xb \in \nlDom$.
\end{itemize}
In order to derive a weak formulation, we multiply the first equation of the left-hand side of \eqref{nlProblem} by a test function $\advar: \nlDom \cup \nlBound \rightarrow \mbR$ and then integrating over $\nlDom$ yields
\begin{align}
    \left(-\nlOp u, \advar \right)_{L^2(\nlDom)} &= \left(f, \advar \right)_{L^2(\nlDom)} \nonumber \\
    \Leftrightarrow \int_{\nlDom} \advar(\xb)\int_{\mbRd} (u(\xb) &- u(\yb)) \kernel(\xb,\yb) ~d\yb d\xb = \int_{\nlDom} f(\xb) \advar(\xb) ~d\xb. \label{firstVarForm}
\end{align}
By applying Fubini’s theorem and by considering $\advar(\yb) = 0$ on $\nlBound$ the first integral of the left-hand side can be expressed as
\begin{align*}
    &\int_{\nlDom} \advar(\xb)\int_{\completeDom} (u(\xb) - u(\yb))\kernel(\xb,\yb) ~d\yb d\xb\\ 
    &= \frac{1}{2} \iint_{(\completeDom)^2} (\advar(\xb) - \advar(\yb)) (u(\xb) - u(\yb))\kernel(\xb,\yb) ~d\yb d\xb.
\end{align*}
Thus we now define the nonlocal bilinear form
\begin{align*}
    \varOp(u,v) &\defas \frac{1}{2} \iint_{(\completeDom)^2} (\advar(\xb) - \advar(\yb)) (u(\xb) - u(\yb))\kernel(\xb,\yb) ~d\yb d\xb\\
    &=\sum_{i,j= 1}^n \frac{1}{2}\int_{\Omega_i} \int_{\Omega_j}\left(\advar(\xb) - \advar(\yb) \right) \left(u(\xb) - u(\yb)\right)\kernelij(\xb,\yb) ~d\yb d\xb \\
    &~~~~+ \sum_{i=1}^n \int_{\Omega_i}  \int_{\nlBound} (\advar(\xb) - \advar(\yb) ) (u(\xb) - u(\yb))\kerneliI(\xb,\yb) ~d\yb d\xb 
  \end{align*}  
  and the linear functional
\begin{align*}
    \varForce(v) &\defas \int_{\nlDom} f(\xb) \advar(\xb) ~d\xb = \sum_{i=1}^n \int_{\Omega_i} f_i(\xb) \advar(\xb) ~d\xb,
\end{align*}
where we used the definition of the forcing term \eqref{def:forcing_term} and that the kernel can be expressed as a sum of kernels as in \eqref{kernel_sum}.
Moreover, with $\varOp$ we can define the following semi-norm and norm, respectively,
\begin{align}
    \begin{split} \label{norm}
        |u|_{\trialSpace(\completeDom)} &\defas \sqrt{\varOp(u,u)}, \\ 
    ||u||_{\trialSpace(\completeDom)} &\defas |u|_{\trialSpace(\completeDom)} + ||u||_{L^2(\completeDom)}.
    \end{split}
\end{align}
This enables us to define the following spaces
\begin{align*}
    \trialSpace(\completeDom) &\defas \{u \in L^2(\completeDom): ||u||_{\trialSpace(\completeDom)} < \infty\} \text{ and} \\
    \testSpace(\completeDom) &\defas \{u \in \trialSpace(\completeDom): u = 0 \text{ on } \nlBound\}.
\end{align*}\\
We now present a variational formulation of problem \eqref{nlProblem}:
\begin{definition}
	Given $f \in L^2(\nlDom)$ and $g \in \trialSpace(\completeDom)$, if $\weakSol \in \trialSpace(\completeDom)$ solves
	\begin{align}\label{weak_formulation}
		\begin{split}
		\varOp(\weakSol, \advar) &= \varForce(\advar) \text{ for all } \advar \in \testSpace(\completeDom) \text{ and} \\
		u&-g \in \testSpace(\completeDom),
	\end{split}	
\end{align}
	then $u$ is called weak solution of \eqref{nlProblem}.
\end{definition}~\\
Since $\varOp(u,\advar) = \varOp(u - g,\advar) + \varOp(g,\advar)$ and $u-g \in \testSpace(\completeDom)$,
problem \eqref{weak_formulation} can also be reformulated as a homogeneous Dirichlet problem
\begin{align}
	\label{weak_formulation_homogeneous}
	\begin{split}
	&\textit{Given $f \in L^2(\nlDom)$ and $g \in \trialSpace(\completeDom)$, find } \tilde{\weakSol} \in \testSpace(\completeDom):\\
	&\varOp(\tilde{\weakSol}, \advar) = \varForce(\advar) - \varOp(g,\advar) \textit{ for all } \advar \in \testSpace(\completeDom),\\
	&\textit{then } \weakSol \defas \tilde{\weakSol} + g \in \trialSpace(\completeDom) \textit{ is a weak solution of \eqref{weak_formulation}.}
\end{split}
\end{align} 
\begin{remark}
	\label{remark:examples_well_posedness}
	In this work, we show the convergence of the multiplicative Schwarz method for nonlocal Dirichlet problems for symmetric kernels, that fulfill (K1) and (K2). Additionally, the kernels are assumed to satisfy the requirements of one of the following two classes, where the well-posedness of the corresponding nonlocal Dirichlet problem \eqref{weak_formulation} is shown, e.g., in \cite{DuAnalysis}:
	\begin{itemize}
		\item Integrable kernels:\\
		There exist constants $0 < \kernel_1, \kernel_2 < \infty$ with
		\begin{align*}
			\kernel_1 \leq \inf_{\xb \in \nlDom} \int_{\completeDom} \kernel(\xb,\yb) ~d\yb \text{ and } \sup_{\xb \in \nlDom} \int_{\completeDom} \left( \kernel(\xb,\yb) \right)^2 ~d\yb \leq \left( \kernel_2 \right)^2.
		\end{align*}
	Then the solution $u \in \trialSpace(\completeDom)$ for \eqref{weak_formulation} exists, is unique and in this case the spaces $\left(\testSpace(\completeDom), |\cdot|_{\trialSpace(\completeDom)} \right)$ and $\left( L_c^2(\completeDom), ||\cdot||_{L^2(\completeDom)} \right)$ as well as $\left(\trialSpace(\completeDom), ||\cdot||_{\trialSpace(\completeDom)} \right)$ and $\left( L^2(\completeDom), ||\cdot||_{L^2(\completeDom)} \right)$ are equivalent.
		\item Singular kernels:\\
		There exist constants $0 < \kernel_*, \kernel^* < \infty$ and $s \in (0,1)$ with
		\begin{align*}
			\kernel_* < \kernel(\xb,\yb)||\xb + \yb||_2^{d+2s} < \kernel^* \text{ for all } \yb \in B_{\delta}(\xb) \text{ and } \xb \in \nlDom.
		\end{align*}
	Then the solution to \eqref{weak_formulation} exists, is unique and the spaces $\left(\testSpace(\completeDom), |\cdot|_{\trialSpace(\completeDom)} \right)$ and \\ $\left( H^s_c(\completeDom), |\cdot|_{H^s(\completeDom)} \right)$ as well as $\left(\trialSpace(\completeDom), ||\cdot||_{\trialSpace(\completeDom)} \right)$ and $\left(H^s(\completeDom), ||\cdot||_{H^s(\completeDom)} \right)$ are equivalent.
	\end{itemize}
\end{remark}
\subsection{Nonlocal Problems with Robin Boundary Conditions}
\label{chap:nl_prob_robin}
Besides solving a nonlocal Dirichlet problem, we are also interested in solving nonlocal problems that involve a Neumann boundary condition, i.e., find a solution $u$ for
\begin{align}
    \label{nlProb_RobinBoundary}
    \begin{split}
        - \nlOp u(\xb) + \kappa(\xb)u(\xb) &= f(\xb)  \text{ for } \xb \in \nlDom, \\
    \neumannOp u(\yb) &= g^N(\yb)  \text{ for } \yb \in \nlBound^N, \\
    u(\yb) &= g^D(\yb) \text{ for } \yb \in \nlBound^D,
    \end{split}
\end{align}
where
\begin{align}
	\nlBound = \nlBound^N \dot{\cup} \nlBound^D,\ \kappa&:\nlDom \rightarrow [\alpha, \beta] \text{ with } 0 < \alpha \leq \beta < \infty,\ \kernel: \mbRd \times \mbRd \rightarrow [0, \infty ) \text{ measurable and} \nonumber \\
\neumannOp(\yb) &\defas \int_{\completeDom} \left( u(\xb) - u(\yb) \right)\kernel(\xb, \yb) ~d\yb \text{ for } \yb \in \nlBound^N. \label{neumann_op}
\end{align}
Here, $\neumannOp$ is called nonlocal Neumann operator. For problems of type \eqref{nlProb_RobinBoundary}, we only assume $\kernel:\mbRd \times \mbRd \rightarrow[0,\infty)$ to be a measurable and symmetric function.
Furthermore, if $\kernel(\xb,\yb) = 0$ for $\xb,\yb \in \nlBound$, the Neumann operator reduces to
\begin{align}
	\label{vu_neumann_op}
	\neumannOp(\yb) = \int_{\nlDom} \left( u(\xb) - u(\yb) \right)\kernel(\xb, \yb) ~d\yb \text{ for } \yb \in \nlBound^N.
\end{align}
In the case of $\nlBound^N=\nlBound$ and \eqref{vu_neumann_op} there exists a unique weak solution to problem \eqref{nlProb_RobinBoundary} according to \cite{vu_diss}. In the following we show existence of a unique solution to problem \eqref{nlProb_RobinBoundary} for the more general case \eqref{nlProb_RobinBoundary} by following the approach in \cite{vu_diss}.
Therefore, in order to get a variational formulation for the problem \eqref{nlProb_RobinBoundary}, we multiply the first two equations of \eqref{nlProb_RobinBoundary} by a test function $\advar: \nlDom \cup \nlBound \rightarrow \mbR$, integrate over $\nlDom$, or $\nlBound^N$ respectively, and add up the two equations. Then we get
\begin{align}
	\label{neumann_var_Form_start}
	\left( - \nlOp u + \kappa u, \advar \right)_{L^2(\nlDom)} + \left( \neumannOp u, \advar \right)_{L^2(\nlBound^N)} = \left(f, \advar \right)_{L^2(\nlDom)} + \left(g^N, \advar \right)_{L^2(\nlBound^N)}.
\end{align}
By assuming $\advar = 0$ on $\nlBound^D$ and by using similar computations as in Section \ref{chap:problem_formulation} we can reformulate \eqref{neumann_var_Form_start} as
\begin{align}
	\begin{split}\label{neumann_var_Form_end}
	&\frac{1}{2} \int_{\completeDom} \int_{\completeDom} \left( \advar(\xb) - \advar(\yb) \right) \left( u(\xb) - u(\yb) \right) \kernel(\xb,\yb) ~d\yb d\xb
	+ \int_{\nlDom} \kappa(\xb) \advar(\xb) u(\xb) ~d\xb \\
	&= \int_{\nlDom} f(\xb) \advar(\xb) ~d\xb + \int_{\nlBound^N} g^N(\xb) \advar(\xb) ~d\xb.
\end{split}
\end{align}
Next, we define the spaces
\begin{alignat*}{2}
    \neumannSpace(\nlDom, \nlBound^N, \nlBound^D) &\defas \{\advar \in L^2(\completeDom): ||\advar ||_{\neumannSpace(\nlDom, \nlBound^N, \nlBound^D)} < \infty \}, \text{ and} \\
    \neumannSpace_c(\nlDom, \nlBound^N, \nlBound^D) &\defas \{\advar \in \neumannSpace(\nlDom, \nlBound^N, \nlBound^D) \text{ with } \advar = 0 \text{ on } \nlBound^D \}, \text{ where} \\
    ||\advar ||^2_{\neumannSpace(\nlDom, \nlBound^N, \nlBound^D)} &\defas \int_\nlDom \advar(\xb)^2 ~d\xb + \int_{\completeDom} \int_{\completeDom} \left( \advar(\xb) - \advar(\yb) \right)^2\kernel(\xb,\yb) ~d\yb d\xb.
\end{alignat*}
Moreover, set
\begin{align*}
	\langle u,\advar \rangle_{\neumannSpace(\nlDom, \nlBound^N, \nlBound^D)} \defas \int_\nlDom u(\xb)\advar(\xb) ~d\xb &+ \int_{\completeDom} \int_{\completeDom} \left( u(\xb) - u(\yb) \right) \left( \advar(\xb) - \advar(\yb) \right)\kernel(\xb,\yb) ~d\yb d\xb,
\end{align*}
which is a semi-inner product for $\neumannSpace_c(\nlDom, \nlBound^N, \nlBound^D)$. We further define the quotient space
\begin{align*}
	\trialSpace(\nlDom, \nlBound^N, \nlBound^D) &\defas \neumannSpace(\nlDom, \nlBound^N, \nlBound^D)/Z, \text{ and }
	\testSpace(\nlDom, \nlBound^N, \nlBound^D) \defas \neumannSpace_c(\nlDom, \nlBound^N, \nlBound^D)/Z, \text{ where} \\
	Z &\defas \{ \advar \in \neumannSpace_c(\nlDom, \nlBound^N, \nlBound^D): \advar(\xb) = 0 \text{ for a.e. } \xb \in \completeDom \} \text{ and set}\\
	\langle [u], [\advar] \rangle_{\trialSpace(\nlDom, \nlBound^N, \nlBound^D)} &\defas \langle u,\advar \rangle_{\neumannSpace(\nlDom, \nlBound^N, \nlBound^D)} \text{ and } ||[\advar]||_{\trialSpace(\nlDom, \nlBound^N, \nlBound^D)} = ||\advar||_{\neumannSpace(\nlDom, \nlBound^N, \nlBound^D)}.
\end{align*}
In the following we write $u$ instead of $[u]$ for ease of presentation.\\
The first line of \eqref{neumann_var_Form_end} yields the bilinear form $\neumannVarOp:\trialSpace(\nlDom, \nlBound^N, \nlBound^D) \times \trialSpace(\nlDom, \nlBound^N, \nlBound^D) \rightarrow \mbR,$ where
\begin{align*}
    \neumannVarOp(u,\advar) \defas \frac{1}{2} \int\limits_{\completeDom} \int\limits_{\completeDom} \left(u(\xb) - u(\yb) \right) \left( \advar(\xb) - \advar(\yb) \right) \kernel(\xb,\yb) ~d\yb d\xb.
\end{align*}
Additionally, we define the linear functional $\varForce^N:\trialSpace(\nlDom, \nlBound^N, \nlBound^D) \rightarrow \mbR$ as
\begin{align*}
	\varForce^N(\advar) = \int_{\nlDom} f(\xb)\advar(\xb) ~d\xb + \int_{\nlBound^N} g^N(\xb)\advar(\xb) ~d\xb.
\end{align*}
\begin{definition}
	Given functions $g^D \in \trialSpace(\nlDom, \nlBound^N, \nlBound^D)$, $f \in L^2(\nlDom)$, $g^N \in L^2(\nlBound^N)$ and ${\kappa:\nlDom \rightarrow [\alpha,\beta]}$, where $0 < \alpha \leq \beta < \infty$, then, if a function $\weakSol \in \trialSpace(\nlDom, \nlBound^N, \nlBound^D)$ solves
	\begin{align}
		\label{neumannVarFormulation}
		\begin{split}
		\neumannVarOp(u,\advar) + \int_{\nlDom} \kappa(\xb) u(\xb) \advar(\xb) ~d\xb &= \varForce^N(\advar) \text{ for all } \advar \in \testSpace(\nlDom, \nlBound^N, \nlBound^D) \text{ and} \\
		\weakSol - g^D &\in \testSpace(\nlDom, \nlBound^N, \nlBound^D),
	\end{split}
	\end{align}
	the function $\weakSol$ is called weak solution to problem \eqref{nlProb_RobinBoundary}.
\end{definition}~\\
Again, this weak formulation can also be reformulated as a problem with homogeneous Dirichlet boundary conditions:
\begin{align}
	&\textit{Given functions $g^D \in \trialSpace(\nlDom, \nlBound^N, \nlBound^D)$, $f \in L^2(\nlDom)$, $g^N \in L^2(\nlBound^N)$ and $\kappa:\nlDom \rightarrow [\alpha,\beta]$,}\nonumber\\ 
    &\textit{where $0 < \alpha \leq \beta < \infty$, find a function } \tilde{\weakSol} \in \testSpace(\nlDom, \nlBound^N, \nlBound^D) \textit{ such that} \nonumber \\
		&\neumannVarOp(\tilde{\weakSol},\advar) + \int_{\nlDom} \kappa(\xb) \tilde{\weakSol}(\xb) \advar(\xb) ~d\xb = \varForce^N(\advar)
		- \left(\neumannVarOp(g^D,\advar)
		+ \int_{\nlDom} \kappa(\xb) g^D(\xb) \advar(\xb) ~d\xb \right) \label{neumannVarFormulation_homogeneous} \\ 
		&\textit{ for all } \advar \in \testSpace(\nlDom, \nlBound^N, \nlBound^D), \textit{then } \weakSol \defas \tilde{\weakSol} + g^D \in \trialSpace(\nlDom, \nlBound^N, \nlBound^D) \textit{ solves \eqref{neumannVarFormulation}.} \nonumber
\end{align}
Now, we can show the following theorem, which is similar to \cite[Theorem 3.1]{vu_diss}.
\begin{theorem}
	\label{theorem:neumann_hilbert}
	The space $\neumannSpace_c(\nlDom, \nlBound^N, \nlBound^D)$ is complete regarding $||\cdot||_{\neumannSpace(\nlDom, \nlBound^N, \nlBound^D)}$. As a consequence, $\testSpace(\nlDom, \nlBound^N, \nlBound^D)$ is a Hilbert space with respect to $||\cdot||_{\trialSpace(\nlDom, \nlBound^N, \nlBound^D)}$. 
\end{theorem}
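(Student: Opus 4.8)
The plan is to establish completeness of $\neumannSpace_c(\nlDom, \nlBound^N, \nlBound^D)$ by taking an arbitrary Cauchy sequence $(\advar_k)$ with respect to $||\cdot||_{\neumannSpace(\nlDom, \nlBound^N, \nlBound^D)}$ and exhibiting a limit inside the space; the Hilbert-space claim for $\testSpace(\nlDom, \nlBound^N, \nlBound^D)$ then follows because $||\cdot||_{\neumannSpace(\nlDom, \nlBound^N, \nlBound^D)}$ is exactly the norm induced by $\langle \cdot, \cdot\rangle_{\neumannSpace(\nlDom, \nlBound^N, \nlBound^D)}$, so that passing to the quotient by the null space $Z$ turns the semi-inner product into a genuine inner product and transports completeness to $\testSpace(\nlDom, \nlBound^N, \nlBound^D) = \neumannSpace_c(\nlDom, \nlBound^N, \nlBound^D)/Z$.

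First I would split the norm into its two contributions. Since $\int_\nlDom \advar^2\,d\xb \le ||\advar||^2_{\neumannSpace(\nlDom, \nlBound^N, \nlBound^D)}$, the sequence $(\advar_k)$ is Cauchy in the complete space $L^2(\nlDom)$ and hence converges there to some $\weakSol$; passing to a subsequence I may assume $\advar_k \to \weakSol$ pointwise almost everywhere on $\nlDom$. Simultaneously, the nonlocal gradients $w_k(\xb,\yb) \defas (\advar_k(\xb) - \advar_k(\yb))\sqrt{\kernel(\xb,\yb)}$ form a Cauchy sequence in $L^2((\completeDom)^2)$, because their squared $L^2$-distance is precisely the seminorm part of $||\advar_k - \advar_l||^2_{\neumannSpace(\nlDom, \nlBound^N, \nlBound^D)}$; thus $w_k \to w$ in $L^2((\completeDom)^2)$ and, after a further subsequence, almost everywhere.

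The crux is to recover the boundary trace of the limit on $\nlBound^N$ and to identify $w$ as the nonlocal gradient of $\weakSol$. Here I would use the defining property of the nonlocal boundary: for almost every $\yb \in \nlBound^N$ the set $\{\xb \in \nlDom : \kernel(\xb,\yb) > 0\}$ has positive measure, since $\int_\nlDom \kernel(\xb,\yb)\,d\xb > 0$ on $\nlBound$. Fixing such a $\yb$ and an admissible $\xb$ at which both $\advar_k(\xb) \to \weakSol(\xb)$ and $w_k(\xb,\yb) \to w(\xb,\yb)$, the identity $\advar_k(\yb) = \advar_k(\xb) - w_k(\xb,\yb)/\sqrt{\kernel(\xb,\yb)}$ shows that $\advar_k(\yb)$ converges; I define $\weakSol(\yb)$ as this limit, which by construction is independent of the chosen $\xb$. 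Setting $\weakSol \defas 0$ on $\nlBound^D$, consistent with $\advar_k = 0$ there, I obtain a function on all of $\completeDom$ satisfying $w(\xb,\yb) = (\weakSol(\xb) - \weakSol(\yb))\sqrt{\kernel(\xb,\yb)}$ almost everywhere on $(\completeDom)^2$.

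It then remains to check membership and convergence. A nonlocal trace estimate obtained from $\advar(\yb)^2\kernel(\xb,\yb) \le 2(\advar(\xb) - \advar(\yb))^2\kernel(\xb,\yb) + 2\advar(\xb)^2\kernel(\xb,\yb)$, integrated over $\nlDom \times \nlBound^N$ and using $\int_\nlDom \kernel(\xb,\yb)\,d\xb > 0$ on $\nlBound^N$, bounds $||\advar||_{L^2(\nlBound^N)}$ in terms of $||\advar||_{\neumannSpace(\nlDom, \nlBound^N, \nlBound^D)}$; applied to $\advar_k$ this yields a uniform $L^2(\completeDom)$ bound, so Fatou's lemma gives $\weakSol \in L^2(\completeDom)$. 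Finally $||\advar_k - \weakSol||^2_{\neumannSpace(\nlDom, \nlBound^N, \nlBound^D)} = ||\advar_k - \weakSol||^2_{L^2(\nlDom)} + ||w_k - w||^2_{L^2((\completeDom)^2)} \to 0$, so $\weakSol \in \neumannSpace_c(\nlDom, \nlBound^N, \nlBound^D)$ is the desired limit and the space is complete. The main obstacle I anticipate is exactly this trace step: making the pointwise recovery of $\weakSol$ on $\nlBound^N$ rigorous, including the selection of almost-every $\xb$-slices via Fubini together with the consistency check, and establishing the trace inequality controlling $||\cdot||_{L^2(\nlBound^N)}$, which is what forces the limit back into $L^2(\completeDom)$ rather than merely into a weighted boundary space.
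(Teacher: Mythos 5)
Your overall strategy coincides with the paper's own proof: split the $\neumannSpace$-norm of the Cauchy sequence into its $L^2(\nlDom)$ part and its nonlocal-gradient part, obtain limits $\weakSol\in L^2(\nlDom)$ and $w\in L^2((\completeDom)\times(\completeDom))$, recover the boundary values pointwise a.e.\ by inverting $w_k(\xb,\yb)=(\advar_k(\xb)-\advar_k(\yb))\sqrt{\kernel(\xb,\yb)}$ on the positive-measure slices $\{\xb\in\nlDom:\kernel(\xb,\yb)>0\}$, identify $w(\xb,\yb)=(\weakSol(\xb)-\weakSol(\yb))\sqrt{\kernel(\xb,\yb)}$, and conclude from $||\advar_k-\weakSol||^2_{\neumannSpace(\nlDom,\nlBound^N,\nlBound^D)}=||\advar_k-\weakSol||^2_{L^2(\nlDom)}+||w_k-w||^2_{L^2((\completeDom)\times(\completeDom))}\rightarrow 0$. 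The pointwise-recovery step is exactly the part the paper delegates to its reference, and your explicit treatment of it (Fubini selection of slices, consistency in $\xb$) is correct; the Hilbert-space statement for the quotient $\testSpace(\nlDom,\nlBound^N,\nlBound^D)$ also follows as you say.

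The genuine gap is the ``nonlocal trace estimate'' you use to put $\weakSol$ into $L^2(\completeDom)$; that step is not just unproven but false in the generality of this theorem, where $\kernel$ is only assumed measurable, nonnegative and symmetric. Integrating your pointwise inequality gives
\begin{align*}
\int_{\nlBound^N}\advar(\yb)^2\int_{\nlDom}\kernel(\xb,\yb)\,d\xb\,d\yb
&\leq 2\int_{\nlDom}\int_{\nlBound^N}\left(\advar(\xb)-\advar(\yb)\right)^2\kernel(\xb,\yb)\,d\yb\,d\xb\\
&\quad+2\int_{\nlDom}\advar(\xb)^2\int_{\nlBound^N}\kernel(\xb,\yb)\,d\yb\,d\xb,
\end{align*}
and to pass from this to $||\advar||_{L^2(\nlBound^N)}\leq C\,||\advar||_{\neumannSpace(\nlDom,\nlBound^N,\nlBound^D)}$ you would need both an essential \emph{lower} bound $\int_{\nlDom}\kernel(\xb,\yb)\,d\xb\geq c_0>0$ on $\nlBound^N$ and an essential \emph{upper} bound on $\int_{\nlBound^N}\kernel(\xb,\yb)\,d\yb$ for $\xb\in\nlDom$. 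Neither is available: the definition of the nonlocal boundary gives only pointwise positivity, and this kernel mass typically decays to zero at the outer rim of $\nlBound$, while for singular kernels (admissible here) the second integral blows up as $\xb$ approaches $\partial\nlDom$. Concretely, for $d=1$, $\nlDom=(0,1)$, $\kernel(\xb,\yb)=\ind_{B_1(\xb)}(\yb)$ off $\nlBound\times\nlBound$ and $\kernel=0$ on $\nlBound\times\nlBound$, the function $\advar=0$ on $\nlDom\cup(-1,0)$ and $\advar(\yb)=(2-\yb)^{-3/4}$ on $(1,2)$ has $||\advar||^2_{\neumannSpace(\nlDom,\nlBound^N,\nlBound^D)}=2\int_1^2(2-\yb)^{-1/2}\,d\yb<\infty$ but $||\advar||_{L^2(\nlBound)}=\infty$, so no such trace inequality can hold. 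The paper never needs this step: it reads membership in $\neumannSpace_c(\nlDom,\nlBound^N,\nlBound^D)$ off what your argument already establishes, namely $\weakSol|_{\nlDom}\in L^2(\nlDom)$, $\weakSol=0$ on $\nlBound^D$ and $(\weakSol(\xb)-\weakSol(\yb))\sqrt{\kernel(\xb,\yb)}=w\in L^2((\completeDom)\times(\completeDom))$, i.e.\ finiteness of the $\neumannSpace$-norm. Under that reading your first three paragraphs complete the proof and the trace step should simply be deleted. Note, moreover, that if one insists on the literal requirement $\weakSol\in L^2(\completeDom)$ in the definition of $\neumannSpace(\nlDom,\nlBound^N,\nlBound^D)$, then truncating the function $\advar$ above at height $n$ produces a Cauchy sequence in $\neumannSpace_c(\nlDom,\nlBound^N,\nlBound^D)$ with no limit in that space, so no argument of the kind you attempt could close the gap; the finite-norm reading is the one under which the theorem is provable.
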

\begin{proof}
	Let $(\advar_n)_{n \in \Nbb}$ be a Cauchy sequence in $\neumannSpace_c(\nlDom, \nlBound^N, \nlBound^D)$. Then, due to the completeness of $L^2(\nlDom)$, respectively $L^2((\completeDom^N) \times (\completeDom^N))$, there exist functions $\tilde{\advar} \in L^2(\nlDom)$ and $w \in L^2((\completeDom^N) \times (\completeDom^N))$, such that
	\begin{align*}
		&\lim_{k \rightarrow \infty} \int_{\nlDom} \left( \advar_k(\xb) - \tilde{\advar}(\xb) \right)^2 ~d\xb = 0 \text{ and} \\
		&\lim_{k \rightarrow \infty} \int_{\completeDom^N} \int_{\completeDom^N} \left( (\advar_k(\xb) - \advar_k(\yb))\sqrt{\kernel(\xb,\yb)} - w(\xb,\yb) \right)^2 ~d\yb d\xb = 0.
	\end{align*}
Analogously to \cite[Theorem 3.1]{vu_diss} one can show that there exists a subsequence $\left( \advar_{n_l} \right)_{l \in \Nbb}$ of $(\advar_n)_{n \in \Nbb}$ and an extension $\advar:\completeDom \rightarrow \mbR$ of  $\tilde{\advar}$, i.e., $\advar = \tilde{\advar}$ on $\nlDom$, with $\advar = 0$ on $\nlBound^D$ and
\begin{align*}
	\lim_{l \rightarrow \infty} \advar_{n_l}(\xb) = \advar(\xb) \text{ for a.e. } \xb \in \completeDom.
\end{align*}
Consequently, we derive
\begin{align*}
	w(\xb,\yb) = \lim_{l \rightarrow \infty} \left( \advar_{n_l}(\xb) - \advar_{n_l}(\yb) \right) \sqrt{\kernel(\xb,\yb)} = \left( \advar(\xb) - \advar(\yb) \right) \sqrt{\kernel(\xb,\yb)} \text{ for a.e. } (\xb,\yb) \in (\completeDom)^2.
\end{align*}
Therefore, $\advar \in \neumannSpace_c(\nlDom, \nlBound^N, \nlBound^D)$ and we can conclude
\begin{align*}
	\lim_{l \rightarrow \infty} ||\advar_{n_l} - \advar||_{\neumannSpace(\nlDom, \nlBound^N, \nlBound^D)} = 0 \text{ and } \lim_{n \rightarrow \infty} ||\advar_{n} - \advar||_{\neumannSpace(\nlDom, \nlBound^N, \nlBound^D)} = 0.
\end{align*}
\end{proof}~\\
Set $|||u|||_{\trialSpace(\nlDom, \nlBound^N, \nlBound^D)}^2\defas \neumannVarOp(u, u) + \int\limits_{\nlDom} \kappa(\xb)u^2(\xb) ~d\xb$, then the norms $|||\cdot|||_{\trialSpace(\nlDom, \nlBound^N, \nlBound^D)}$ and ${||\cdot||_{\trialSpace(\nlDom, \nlBound^N, \nlBound^D)}}$ are equivalent for $u \in \testSpace(\nlDom, \nlBound^N, \nlBound^D)$ since 
\begin{align}\label{ineq:neumann_norms}
	\min\{\frac{1}{2}, \alpha \} ||u||^2_{\trialSpace(\nlDom, \nlBound^N, \nlBound^D)} \leq |||u|||_{\trialSpace(\nlDom, \nlBound^N, \nlBound^D)}^2 \leq \max\{ \frac{1}{2}, \beta \}||u||^2_{\trialSpace(\nlDom, \nlBound^N, \nlBound^D)}.
\end{align}
By applying the Riesz representation theorem we directly get the following conclusion. 
\begin{corollary}
	Given $f \in L^2(\nlDom)$, $g^N \in L^2(\nlBound^N)$ and $g^D \in \trialSpace(\nlDom, \nlBound^N, \nlBound^D)$, there exists a unique function $\tilde{\weakSol} \in \testSpace(\nlDom, \nlBound^N, \nlBound^D)$ that satisfies \eqref{neumannVarFormulation_homogeneous}, i.e., $\weakSol = \tilde{\weakSol} + g^D$ is a weak solution for \eqref{nlProb_RobinBoundary}.
\end{corollary}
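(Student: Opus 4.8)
The plan is to recast the homogeneous problem \eqref{neumannVarFormulation_homogeneous} as a Riesz representation problem on the Hilbert space $\testSpace(\nlDom, \nlBound^N, \nlBound^D)$. I would first introduce the left-hand side as a bilinear form $a(u,\advar) \defas \neumannVarOp(u,\advar) + \int_{\nlDom} \kappa(\xb) u(\xb)\advar(\xb)~d\xb$ and observe that it is exactly the form whose diagonal is the energy norm, i.e. $a(u,u) = |||u|||_{\trialSpace(\nlDom, \nlBound^N, \nlBound^D)}^2$. It is clearly symmetric and bilinear, and by \eqref{ineq:neumann_norms} it is positive definite, since $a(u,u) \geq \min\{\tfrac{1}{2},\alpha\}\,||u||^2_{\trialSpace(\nlDom, \nlBound^N, \nlBound^D)} > 0$ whenever $u \neq 0$ in $\testSpace(\nlDom, \nlBound^N, \nlBound^D)$. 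Hence $a$ is an inner product on $\testSpace(\nlDom, \nlBound^N, \nlBound^D)$ inducing the norm $|||\cdot|||_{\trialSpace(\nlDom, \nlBound^N, \nlBound^D)}$, which is equivalent to $||\cdot||_{\trialSpace(\nlDom, \nlBound^N, \nlBound^D)}$ again by \eqref{ineq:neumann_norms}. Combined with the completeness established in Theorem \ref{theorem:neumann_hilbert}, this makes $\big(\testSpace(\nlDom, \nlBound^N, \nlBound^D),\, a\big)$ a Hilbert space, so the only remaining task is to show that the right-hand side is a bounded linear functional and then invoke Riesz.

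Accordingly, I would write the right-hand side as $L(\advar) \defas \varForce^N(\advar) - \big(\neumannVarOp(g^D,\advar) + \int_{\nlDom}\kappa(\xb)g^D(\xb)\advar(\xb)~d\xb\big) = \varForce^N(\advar) - a(g^D,\advar)$ and bound each contribution separately. The term $a(g^D,\advar)$ is controlled by the Cauchy--Schwarz inequality for the inner product $a$, giving $|a(g^D,\advar)| \leq |||g^D|||_{\trialSpace(\nlDom, \nlBound^N, \nlBound^D)}\,|||\advar|||_{\trialSpace(\nlDom, \nlBound^N, \nlBound^D)}$, which is finite since $g^D \in \trialSpace(\nlDom, \nlBound^N, \nlBound^D)$. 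For the force functional, $\int_{\nlDom} f(\xb)\advar(\xb)~d\xb \leq ||f||_{L^2(\nlDom)}\big(\int_{\nlDom}\advar^2\big)^{1/2}$ is bounded directly by the $L^2(\nlDom)$ part that is explicitly present in $||\cdot||_{\neumannSpace(\nlDom, \nlBound^N, \nlBound^D)}$.

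The step I expect to be the main obstacle is the boundary integral $\int_{\nlBound^N} g^N(\xb)\advar(\xb)~d\xb$, because the norm $||\cdot||_{\neumannSpace(\nlDom, \nlBound^N, \nlBound^D)}$ contains no explicit $L^2(\nlBound^N)$ contribution, so the trace $||\advar||_{L^2(\nlBound^N)}$ must be absorbed into the nonlocal energy. My plan here is to prove a nonlocal trace estimate: for $\yb \in \nlBound^N$ use $\advar(\yb)^2 \leq 2(\advar(\xb)-\advar(\yb))^2 + 2\advar(\xb)^2$, multiply by $\kernel(\xb,\yb)$, integrate over $\xb \in \nlDom$ and then over $\yb \in \nlBound^N$. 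The first resulting term is dominated by the nonlocal seminorm $\neumannVarOp(\advar,\advar)$ and the second by $||\advar||^2_{L^2(\nlDom)}$, provided the interaction weight $\int_{\nlDom}\kernel(\xb,\yb)~d\xb$ admits a uniform lower bound on $\nlBound^N$. While positivity of this quantity is built into the very definition of $\nlBound$, a uniform lower bound is what is actually required, and this is precisely where the structural assumptions on $\kernel$ following \cite{vu_diss} enter; I would verify this estimate carefully rather than treat it as automatic.

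Once the trace inequality yields $||\advar||_{L^2(\nlBound^N)} \leq C\,||\advar||_{\trialSpace(\nlDom, \nlBound^N, \nlBound^D)}$, the bound $|\int_{\nlBound^N} g^N\advar| \leq ||g^N||_{L^2(\nlBound^N)}\,C\,||\advar||_{\trialSpace(\nlDom, \nlBound^N, \nlBound^D)}$ completes the continuity of $L$. I would then apply the Riesz representation theorem on $\big(\testSpace(\nlDom, \nlBound^N, \nlBound^D),\, a\big)$ to obtain a unique $\tilde{\weakSol} \in \testSpace(\nlDom, \nlBound^N, \nlBound^D)$ with $a(\tilde{\weakSol},\advar) = L(\advar)$ for all $\advar \in \testSpace(\nlDom, \nlBound^N, \nlBound^D)$, which is exactly \eqref{neumannVarFormulation_homogeneous}. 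Setting $\weakSol \defas \tilde{\weakSol} + g^D$ then gives the unique weak solution of \eqref{neumannVarFormulation}, as claimed.
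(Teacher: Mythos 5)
Your proposal takes essentially the same route as the paper: after Theorem \ref{theorem:neumann_hilbert} and the norm equivalence \eqref{ineq:neumann_norms} establish that the energy form $\neumannVarOp(u,\advar)+\int_{\nlDom}\kappa u\advar~d\xb$ is an inner product making $\testSpace(\nlDom,\nlBound^N,\nlBound^D)$ a Hilbert space, the paper's entire proof is to invoke the Riesz representation theorem, exactly as you do. The one point where you go beyond the paper is your nonlocal trace estimate: you correctly observe that boundedness of the boundary functional $\advar \mapsto \int_{\nlBound^N} g^N(\xb)\advar(\xb)~d\xb$ on $\testSpace(\nlDom,\nlBound^N,\nlBound^D)$ is not automatic and needs kernel conditions beyond measurability, nonnegativity and symmetry (in your splitting, a uniform lower bound of $\int_{\nlDom}\kernel(\xb,\yb)~d\xb$ over $\yb\in\nlBound^N$ and an upper bound of $\int_{\nlBound^N}\kernel(\xb,\yb)~d\yb$ over $\xb\in\nlDom$), a point the paper leaves implicit by citing the approach of \cite{vu_diss}, so your extra care is a refinement of, not a departure from, the paper's argument.
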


\section{Schwarz Methods}
\label{chap:schwarz_methods}
The first domain decomposition method was already formulated by Hermann Amandus Schwarz around 1869 and is now known as the Schwarz alternating method\cite{schwarz1869, schwarz1870}. It is an iterative method which was originally used to solve the Laplace equation on a domain by decomposing the domain into two overlapping domains where the Laplace equation could be easily solved on each. In every iteration the former solution regarding the problem on the other domain was used as a boundary condition on the boundary that was part of the other domain.\\ 
In this section we formulate the additive and multiplicative Schwarz method for the nonlocal problem \eqref{nlProblem} in the same manner. Therefore, we denote the nonlocal boundary regarding the subdomains $\Omega_i$ of the decomposition \eqref{decomposition} by
\begin{align*}
    \nlBound_i \defas \{\yb \in \mbRd \setminus \Omega_i: \int_{\Omega_i} \kernel(\xb, \yb) d\xb > 0 \}.
\end{align*}
\subsection{Schwarz Methods for Nonlocal Dirichlet Problems}
\label{chap:schwarz_dirichlet}
Given the solution $u$ of the nonlocal problem \eqref{weak_formulation}, for $i=1,...,n$ we set $u_i \defas u|_{\Omega_i \cup \nlBound_i},$ and observe that $u_i$ is a solution of
\begin{align}
    \textit{Find a weak solution } \weakSol_i &\in \trialSpace(\nlDom_i \cup \nlBound_i) \textit{ subject to } \nonumber \\
    \begin{split} \label{schwarz_formulation}
        -\nlOp u_i &= f_i \textit{ on } \nlDom_i, \\
           u_i &= u_j \textit{ on } \nlBound_i \cap \nlDom_j, \textit{ for } j = 1,...,n   \textit{ and } j\neq i, \\
           u_i &= g \textit{ on } \nlBound_i \cap \nlBound.
    \end{split}
\end{align}
In this case, $\weakSol|_{\nlDom_j \cup \nlBound_j} \in \trialSpace(\nlDom_j \cup \nlBound_j)$ for $j=1,...,n$ and $j \neq i$ serve as the boundary data.\\
On the other hand, as we will show in Section \ref{well-posedness_dirichlet}, if we have weak solutions $\{u_i\}_{i=1}^n$ for \eqref{schwarz_formulation}, the function $$u \defas \sum_{i=1}^n u_i \ind_{\nlDom_i} + g \ind_{\nlBound}$$ solves the nonlocal problem \eqref{nlProblem}.
\begin{remark}
	\label{remark:boundary_extension}
	Since we derive \eqref{schwarz_formulation} from \eqref{nlProblem} and \eqref{weak_formulation}, respectively, the boundary data of the problem on $\nlDom_i$ is an element of $\trialSpace(\nlDom_i \cup \nlBound_i)$ and can naturally be extended to a function of $\trialSpace(\completeDom)$. Therefore, we expand the boundary $\nlBound_i$ to $\hat{\nlBound}_i \defas (\nlDom \setminus \nlDom_i) \cup \nlBound$ and restrict ourselves to boundary data in $\trialSpace(\nlDom_i \cup \hat{\nlBound}_i)$. Consequently, we will make use of the spaces $\trialSpace(\nlDom_i \cup \hat{\nlBound}_i) = \trialSpace(\completeDom)$ and $\testSpace(\nlDom_i \cup \hat{\nlBound}_i) = \{\weakSol \in \trialSpace(\completeDom): \weakSol = 0 \text{ on } \hat{\nlBound}_i =\left( \nlDom \setminus \nlDom_i \right) \cup \nlBound  \}$ in order to formulate a weak formulation for \eqref{schwarz_formulation}.
\end{remark}~\\
Thus, for $\advar_i \in \testSpace(\nlDom_i \cup \hat{\nlBound}_i)$ and $\weakSol \in \trialSpace(\completeDom)$ the bilinear form $\varOp$ reduces to the corresponding bilinear operator that only contains integrals over $\left( \nlDom_i \cup \nlBound_i \right) \times \left( \nlDom_i \cup \nlBound_i \right)$ as follows
\begin{align*}
    \varOp(\weakSol, \advar_i) = &\sum_{l,j= 1}^n \frac{1}{2} \int_{\nlDom_l} \int_{\nlDom_j} \left( \weakSol(\xb) - \weakSol(\yb) \right) \left( \advar_i(\xb) - \advar_i(\yb) \right) \kernel(\xb,\yb) ~d\yb d\xb \\
	&+ \sum_{j=1}^n \int_{\nlDom_j} \weakSol(\xb) \advar_i(\xb) \int_{\nlBound} \kernel(\xb,\yb) ~d\yb d\xb \\
	&= \frac{1}{2} \int_{\nlDom_i} \int_{\nlDom_i} \left( \weakSol(\xb) - \weakSol(\yb) \right) \left( \advar_i(\xb) - \advar_i(\yb) \right) \kernel(\xb,\yb) ~d\yb d\xb \\
	&+ \sum_{\substack{j=1 \\ j \neq i}}^n \int_{\nlDom_i} \int_{\nlDom_j} \advar_i(\xb) \left( \weakSol(\xb) - \weakSol(\yb) \right) \kernel(\xb, \yb) ~d\yb d\xb + \int_{\nlDom_i} \weakSol(\xb) \advar_i(\xb) \int_{\nlBound} \kernel(\xb,\yb) ~d\yb d\xb,
\end{align*}
where we used the symmetry of $\kernel$ in the last step. Analogously, $\varForce$ can in this case be written as an operator that only integrates over $\nlDom_i$, i.e.,
\begin{align*}
	\varForce(\advar_i) = &\int_{\nlDom} f\advar_i ~d\xb = \int_{\nlDom_i} f_i\advar_i ~d\xb. 
\end{align*}
Then, we define the variational formulation for the subproblem on $\nlDom_i$ as:
\begin{align}
	\label{weak_formulation_subdomain}
	\begin{split}
        \textit{Given } f \in L^2(\nlDom) \textit{ and } &\textit{boundary data } g \in \trialSpace(\nlDom_i \cup \extnlBound_i)=\trialSpace(\completeDom),\\
        \textit{ find } \weakSol_i \in \trialSpace(\nlDom_i \cup \extnlBound_i) &=\trialSpace(\completeDom) \textit{ such that }\\
		\varOp(\weakSol_i, \advar_i) &= \varForce(\advar_i) \textit{ for all } \advar_i \in \testSpace(\nlDom_i \cup \hat{\nlBound}_i), \\
		\weakSol_i - g &\in \testSpace(\nlDom_i \cup \hat{\nlBound}_i).
	\end{split}
\end{align}
In the following we show two popular Schwarz methods to solve the problem \eqref{schwarz_formulation} in an iterative manner. In both cases we start with initial guesses $\{u_i^0\}_{i=1}^n$.
In the multiplicative Schwarz method, as shown in Algorithm \ref{alg:multiplicative_schwarz}, we use the most recent solutions in every iteration as the boundary data, i.e., for the subproblem $i$ the solutions $u_j^{k+1}$ for $0 \leq j < i$ from the current iteration are employed. \\
\begin{algorithm}[H]
\label{alg:multiplicative_schwarz}
\SetAlgoLined
\DontPrintSemicolon
\SetKwInOut{Input}{input}
\Input{$u_i^0$ for $i=1,...,n$}
\For{$k=0,1,...$ until convergence}{
\For{$i=1,...,n$}{
Find $u_i^{k+1}$ s.t.
\begin{alignat*}{2}
-\nlOp u_i^{k+1} &= f_i \quad && \text{on } \nlDom_i, \\
u_i^{k+1} &= u_j^{k+1} \quad && \text{on } \nlBound_i \cap \nlDom_j \text{ for }j=1,...,i-1, \\
u_i^{k+1} &= u_j^{k} \quad && \text{on } \nlBound_i \cap \nlDom_j \text{ for }j=i+1,...,n, \\
u_i^{k+1} &= g \quad && \text{on } \nlBound_i \cap \nlBound.
\end{alignat*}
}
}
\caption{Multiplicative Schwarz method}
\end{algorithm}~\\
Another approach is to only use the solutions of the former outer iteration $\{u_j^{k}\}_{j=1}^n$ instead of utilizing $u_j^{k+1}$ for the subproblem $i$, if $0 \leq j < i$. This leads to the so-called additive Schwarz algorithm that is presented in Algorithm \ref{alg:additive_schwarz}. This version of Schwarz iterative methods is easily parallelizable. In some cases there is also a way to compute the multiplicative Schwarz algorithm in parallel to some extend, which leads to the multicolor Schwarz algorithm that is illustrated in \cite[Algorithm 2.2.2]{mathewDD} for a coercive elliptic partial differential equation.\\
\begin{algorithm}[H]
\label{alg:additive_schwarz}
\SetAlgoLined
\DontPrintSemicolon
\SetKwInOut{Input}{input}
\Input{$u_i^0$ for $i=1,...,n$}
\For{$k=0,1,...$ until convergence}{
\For{$i=1,...,n$}{
Find $u_i^{k+1}$ s.t.
\begin{alignat*}{2}
-\nlOp u_i^{k+1} &= f_i \quad && \text{on } \nlDom_i, \\
u_i^{k+1} &= u_j^{k} \quad &&\text{on } \nlBound_i \cap \nlDom_j \text{ for }j=1,...,n, \\ 
u_i^{k+1} &= g \quad && \text{on } \nlBound_i \cap \nlBound.
\end{alignat*}
}
}
\caption{Additive Schwarz Method}
\end{algorithm}
\begin{remark}
\label{remark:add_schwarz_two_domains}
    If we decompose $\nlDom$ in only two domains $\nlDom_1$ and $\nlDom_2$ the additive Schwarz method basically consists of two multiplicative Schwarz methods, where one starts by solving the subproblem on $\nlDom_1$ and the second one begins by computing the solution on  $\nlDom_2$. 
    Therefore, the application of the additive Schwarz methods can only be faster, if the domain $\nlDom$ is decomposed in at least three subdomains.
\end{remark}~\\
In Chapter \ref{chap:schwarz_finite_elements} we will present the finite element versions of the multiplicative and additive Schwarz method. Then, the discretized multiplicative Schwarz method coincides with the block-Gauß-Seidel algorithm and the discretized additive Schwarz approach is equivalent to the block-Jacobi method.
\subsection{Schwarz Methods for Nonlocal Problems with Neumann Boundary Conditions}
\label{sec:Neumann}
In this section we apply the Schwarz Method on a nonlocal problem with Neumann boundary conditions as introduced in Chapter \ref{chap:nl_prob_robin} For ease of presentation, we assume $\nlBound^N = \nlBound$ (and consequently $\nlBound^D = \emptyset$). The case, where $\nlBound = \nlBound^N \dot{\cup} \nlBound^D$ with $\nlBound^D \neq \emptyset$, can be handled analogously. Thus, we consider the following problem.
\begin{align}
	\begin{split}
	\label{nlProb_NeumannBoundary}
	\textit{Find a weak solution } u &\in \trialSpace(\nlDom, \nlBound, \emptyset) \textit{ to}\\
	-\nlOp u + \kappa u &= f \textit{ on } \nlDom, \\
	\neumannOp u &= g^N \textit{ on } \nlBound,
	\end{split}
\end{align}
where $\kernel(\xb,\yb)=0$ for $\xb,\yb \in \nlBound$, i.e., we use the nonlocal Neumann operator \eqref{vu_neumann_op}.\\
Moreover, let $\{\nlBound_i^N\}_{i=1}^n$ be a partition of the nonlocal boundary $\nlBound(=\nlBound^N)$ such that
\begin{align*}
	&\overline{\nlBound} = \left( \bigcup_{i=1}^n \overline{\nlBound}_i^N \right),~ \nlBound_i^N \cap \nlBound_j^N = \emptyset \text{ for } i,j = 1,...,n, ~i \neq j\text{ and} \\
	&\nlBound_i^N \subset \nlBound_i \cap \nlBound = \{ \yb \in \nlBound: \int_{\nlDom_i}  \kernel(\xb, \yb) ~d\xb > 0 \}.
\end{align*}
Here, it is possible that there exist a subset of indices $\mathbb{S} \subset \{1,...,n\}$ with $|\mathbb{S}| \leq n-1$ such that 
\begin{align*}
\nlBound^N_{j} = \emptyset \textit{ for } j \in \mathbb{S}.
\end{align*}
Thus, the partition $\{\nlBound_i^N\}_{i=1}^n$ can contain the empty set multiple times and includes at least one non-empty set. Moreover, the boundary data $g^N$ may vary across the different parts of the decomposition $\{\nlBound_i^N\}_{i=1}^n$, i.e., there exist mutually different $g_i^N:\nlBound_i^N \rightarrow \mbR$, for $i=1,...,n,$ such that $g^N=\sum_{i=1}^n g_i^N \ind_{\nlBound_i^N}$.\\
Given the weak solution $u$ of the problem \eqref{nlProb_NeumannBoundary}, then the functions $u_i \defas u \ind_{\nlDom_i \cup \nlBound_i }$ for $i=1,...,n$ are solutions of
	\begin{align}
		\label{schwarz_formulation_neumann}
		\begin{split}
		\textit{Find a weak solution } u_i &\in \trialSpace(\nlDom_i, \nlBound_i^N, \nlBound_i \setminus \nlBound_i^N) \textit{~subject to}\\
		-\nlOp u_i + \kappa u_i &= f_i \textit{ on } \nlDom_i, \\
		\neumannOp u_i &= g_i^N \textit{ on } \nlBound_i^N, \\
		u_i &= u_j \textit{ on } \nlBound_i \cap \left(\nlDom_j \cup \nlBound_j^N \right), \textit{ for } j=1,...,n \textit{ and } j \neq i.
\end{split}	
\end{align}
So, on every subproblem we are solving a nonlocal Robin problem. Additionally, since the nonlocal boundary regarding $\nlDom_i$ can be partially in another subdomain $\nlDom_j$, i.e., $\nlBound_i \cap \nlDom_j \neq \emptyset$, the integration domain of the Neumann operator corresponding to the subproblem on $\nlDom_i$ can contain parts of the nonlocal Dirichlet Boundary and the nonlocal Neumann operator is in this case of type \eqref{neumann_op} and not of type \eqref{vu_neumann_op}.
Again, if we have a weak solutions $(u_i)_{i=1}^n$ of the problems \eqref{schwarz_formulation_neumann}, we set
\begin{align*}
	u = \sum_{i=1}^n u_i \ind_{\nlDom_i \cup \nlBound_i^N}
\end{align*}
and then $u$ is weak solution to \eqref{schwarz_formulation_neumann}, which we show in Section \ref{sec:well-posedness_neumann}. 
Again, analogously to the nonlocal Dirichlet case as described in Remark \ref{remark:boundary_extension}, we extend the Dirichlet boundary $\nlBound_i \setminus \nlBound_i^N$ corresponding to the subproblem on $\nlDom_i \cup \nlBound_i^N$ to the domain ${\extnlBound_i \defas \bigcup\limits_{\substack{j=1 \\ j \neq i}}^n\left(\nlDom_j \cup \nlBound_j^N\right) =\left(\nlDom \cup \nlBound \right) \setminus \left( \nlDom_i \cup \nlBound_i^N \right)}$. Then, $\trialSpace(\nlDom,\nlBound,\emptyset) \subset \trialSpace(\nlDom_i, \nlBound_i^N, \extnlBound_i)$ and, due to the symmetry of $\kernel$, we obtain for $\weakSol_i \in \trialSpace(\nlDom_i, \nlBound_i^N, \extnlBound_i)$ and $\advar_i \in \testSpace(\nlDom_i, \nlBound_i^N, \extnlBound_i)$ that   
\begin{align*}
	\neumannVarOp(\weakSol_i,\advar_i) + \int_{\nlDom} \kappa(\xb) \weakSol_i(\xb) \advar_i(\xb) ~d\xb = &\frac{1}{2} \int_{\nlDom_i \cup \nlBound_i^N} \int_{\nlDom_i \cup \nlBound_i^N} \left( \weakSol_i(\xb) - \weakSol_i(\yb) \right) \left( \advar_i(\xb) - \advar_i(\yb) \right) \kernel(\xb,\yb) ~d\yb d\xb \\
	&+ \sum_{\substack{j=1 \\ j \neq i}}^n \int_{\nlDom_i \cup \nlBound_i^N} \int_{\nlDom_j \cup \nlBound_j^N} \advar_i(\xb) \left(\weakSol_i(\xb) - \weakSol_i(\yb) \right) \kernel(\xb,\yb) ~d\yb d\xb \\
	&+ \int_{\nlDom_i} \kappa(\xb)\weakSol_i(\xb)\advar_i(\xb) ~d\xb.
\end{align*}
Thus, we define the variational formulation of the Robin problem on $\nlDom_i \cup \nlBound_i^N$ as:
\begin{align}
	\textit{Given } f \in L^2(\nlDom), g^N \in L^2(\nlBound^N) \textit{ and } g^D \in \trialSpace(\nlDom_i, \nlBound_i^N&, \hat{\nlBound}_i), \textit{ find } \weakSol_i \in \trialSpace(\nlDom_i, \nlBound_i^N, \hat{\nlBound}_i) \textit{ with } \nonumber \\
	\label{neumann_weak_formulation_subdomain}
	\begin{split}
	\neumannVarOp(\weakSol_i, \advar_i) + \int_{\nlDom_i} \kappa(\xb)\advar_i(\xb)\weakSol_i(\xb) ~d\xb &= \varForce^{N}(\advar_i) \textit{ for all } \advar_i \in \testSpace(\nlDom_i, \nlBound_i^N, \hat{\nlBound}_i)\\
	\textit{and } \weakSol_i - g^D &\in \testSpace(\nlDom_i, \nlBound_i^N, \hat{\nlBound}_i).
\end{split}
\end{align}
The multiplicative Schwarz algorithm can then be formulated as follows:\\
\begin{algorithm}[H]
	\label{alg:multiplicative_schwarz_neumann}
	\SetAlgoLined
	\DontPrintSemicolon
	\SetKwInOut{Input}{input}
	\Input{$u_i^0$ for $i=1,...,n$}
	\For{$k=0,1,...$ until convergence}{
		\For{$i=1,...,n$}{
			Find weak solution $u_i^{k+1}$ to
			\begin{alignat*}{2}
				-\nlOp u_i^{k+1} + \kappa u_i^{k+1} &= f_i \quad && \text{on } \nlDom_i, \\
				\neumannOp u_i^{k+1} &= g_i^N && \text{on } \nlBound_i^N,\\
				u_i^{k+1} &= u_j^{k+1} \quad && \text{on } \nlBound_i \cap \nlDom_j \text{ for }j=1,...,i-1, \\
				u_i^{k+1} &= u_j^{k} \quad && \text{on } \nlBound_i \cap \nlDom_j \text{ for }j=i+1,...,n.
			\end{alignat*}
		}
	}
	\caption{Multiplicative Schwarz Method for Nonlocal Problems with Neumann Boundary Conditions}
\end{algorithm}~\\
The additive Schwarz method can be analogously derived as in Section \ref{chap:schwarz_dirichlet} by only using the solution of the former outer iteration $(u_i^k)_{i=1}^n$ as the boundary data in iteration $k$ to compute the new solutions $(u_i^{k+1})_{i=1}^n$.

\section{Well-posedness of the Multiplicative Schwarz Method}
\label{sec:well_posedness}
In this section we show that the multiplicative Schwarz methods as described in Chapters \ref{chap:schwarz_dirichlet} and \ref{sec:Neumann} converge. Therefore we recall the first part of \cite[Theorem I.2]{LionsSchwarz1}:
\begin{theorem}
\label{theo:LionsTheorem}
Given closed subspaces $\{\schwarzSubspace_i\}_{i=1}^n$ of a Hilbert space $\schwarzSubspace \defas \overline{\oplus_{i=1}^n \schwarzSubspace_i}$, orthogonal projections $\projection_i:\schwarzSubspace \rightarrow \schwarzSubspace$ onto the subspace $\schwarzSubspace_i$ and a sequence $\left(\weakSol^l\right)_{l=1}^{\infty}$, where $\projection_i(\weakSol^{l-1})=\weakSol^l$ for $l=km+i$ given some data $\weakSol^0$. Then $\weakSol^l$ converges to a unique $\weakSol \in \schwarzSubspace$.
\end{theorem}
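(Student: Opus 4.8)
I read the statement as the abstract convergence of the method of cyclic orthogonal projections, so the plan is to regard $(\weakSol^l)$ as the orbit of $\weakSol^0$ under the one–sweep operator $T := \projection_n \projection_{n-1}\cdots\projection_1$ (interpreting $l=kn+i$ as "apply $\projection_i$ in the $i$-th substep of sweep $k$") and to show that $T^k\weakSol^0$, together with the intermediate iterates, converges in norm. First I would record the elementary energy identity: since $\weakSol^l=\projection_i\weakSol^{l-1}$ with $\projection_i$ an orthogonal projection, Pythagoras gives $\|\weakSol^{l-1}\|^2=\|\weakSol^l\|^2+\|\weakSol^{l-1}-\weakSol^l\|^2$. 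Hence $(\|\weakSol^l\|)_l$ is non-increasing and bounded below, so it converges to some $c\ge 0$; summing the identity yields $\sum_l\|\weakSol^l-\weakSol^{l-1}\|^2<\infty$, i.e. the iteration is \emph{asymptotically regular}, $\|\weakSol^l-\weakSol^{l-1}\|\to 0$.

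Next I would identify the candidate limit through the fixed space $\mathcal{F}:=\mathrm{Fix}(T)=\bigcap_{i=1}^n\schwarzSubspace_i$. One checks that $T$ acts as the identity on $\mathcal{F}$ and that each $\projection_i$ (hence $T$) leaves $\mathcal{F}^\perp$ invariant, so by linearity it suffices to prove $\weakSol^l\to 0$ whenever $\weakSol^0\in\mathcal{F}^\perp$; the general limit is then $\projection_{\mathcal{F}}\weakSol^0$, which is uniquely determined by the data $\weakSol^0$ (this is the ``unique'' limit in the statement). The structural hypothesis $\schwarzSubspace=\overline{\oplus_{i=1}^n\schwarzSubspace_i}$ is what makes this useful in the Schwarz application: there the error of the multiplicative method is the orbit under the \emph{complementary} projections $I-\projection_i$ onto $\schwarzSubspace_i^\perp$, whose common fixed space is $\bigcap_i\schwarzSubspace_i^\perp=\big(\sum_i\schwarzSubspace_i\big)^\perp=\{0\}$ precisely because $\schwarzSubspace$ is the closed sum of the $\schwarzSubspace_i$; thus the error tends to zero and the iterates converge to the unique solution.

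For $\weakSol^0\in\mathcal{F}^\perp$ I would first establish \emph{weak} convergence to $0$. The bounded sequence $(\weakSol^l)$ has weak cluster points; suppose $\weakSol^{l_j}\rightharpoonup h$ along sweep ends $l_j=k_j n$. Asymptotic regularity gives $\|\weakSol^{l_j+r}-\weakSol^{l_j}\|\to 0$ for each fixed $r\in\{1,\dots,n\}$ (a finite telescoping sum), whence $\weakSol^{l_j+r}\rightharpoonup h$ as well; but $\weakSol^{l_j+r}=\projection_r\cdots\projection_1\weakSol^{l_j}\in\schwarzSubspace_r$ and each $\schwarzSubspace_r$, being a closed subspace, is weakly closed, so $h\in\schwarzSubspace_r$ for every $r$, i.e. $h\in\mathcal{F}$. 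Since the iterates remain in the weakly closed, $T$-invariant set $\mathcal{F}^\perp$, we also have $h\in\mathcal{F}^\perp$, so $h\in\mathcal{F}\cap\mathcal{F}^\perp=\{0\}$; as every weak cluster point is $0$, $\weakSol^l\rightharpoonup 0$.

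The main obstacle is upgrading weak convergence to norm convergence — equivalently, showing the monotone limit $c=\lim_l\|\weakSol^l\|$ equals $0$, which weak convergence to $0$ does \emph{not} by itself force. This is exactly the content of the von Neumann--Halperin theorem on products of orthogonal projections, and is where the real work lies. I would attack it using the extra information beyond bare asymptotic regularity, namely that every within-sweep increment $\|(I-\projection_i)\weakSol^{kn+i-1}\|$ is square-summable over $k$, together with the linearity of $T$, to rule out a nonzero norm-preserving weak limit: for $n=2$ this is von Neumann's original alternating-projections argument, and the general cyclic case follows either by Halperin's reduction to the two-subspace situation or by a direct Fej\'er-monotonicity argument against $\mathcal{F}$. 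Once $c=0$ is secured, $\|\weakSol^l\|\to 0$ combined with $\weakSol^l\rightharpoonup 0$ gives strong convergence on $\mathcal{F}^\perp$, and by linearity $\weakSol^l\to\projection_{\mathcal{F}}\weakSol^0$ in general, establishing the claimed convergence to a unique $\weakSol\in\schwarzSubspace$.
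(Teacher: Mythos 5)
First, for context: the paper does not actually prove this statement at all --- it is recalled verbatim from \cite[Theorem I.2]{LionsSchwarz1} and used as a black box --- so your attempt has to stand on its own as a proof of the cyclic-projection (von Neumann--Halperin) theorem. The soft parts of your argument are correct: the Pythagoras identity $\|\weakSol^{l-1}\|^2=\|\weakSol^l\|^2+\|\weakSol^{l-1}-\weakSol^l\|^2$ and the resulting asymptotic regularity, the identification $\mathrm{Fix}(\projection_n\cdots\projection_1)=\bigcap_i\schwarzSubspace_i=:\mathcal{F}$, the reduction via invariance of $\mathcal{F}$ and $\mathcal{F}^\perp$ to showing $\weakSol^l\to 0$ for $\weakSol^0\in\mathcal{F}^\perp$, and the argument that every weak cluster point lies in $\mathcal{F}\cap\mathcal{F}^\perp=\{0\}$, hence $\weakSol^l\rightharpoonup 0$. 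The genuine gap is exactly where you admit ``the real work lies'': the upgrade from weak to norm convergence is not an auxiliary step, it \emph{is} the theorem (Lions' Theorem I.2 is precisely this statement), so invoking ``the content of the von Neumann--Halperin theorem'' at that point makes the attempt circular. What you have actually proved is weak convergence only.

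Moreover, the routes you sketch for closing the gap do not, as stated, close it. Square-summability $\sum_l\|\weakSol^l-\weakSol^{l-1}\|^2<\infty$ is just asymptotic regularity again, and a bounded sequence with square-summable increments need not converge (rotate around a circle by the angles $1/l$). Fej\'er monotonicity with respect to $\mathcal{F}$ --- which does hold, each $\projection_i$ being nonexpansive and fixing $\mathcal{F}$ --- combined with weak convergence to a point of $\mathcal{F}$ does not imply norm convergence either; that implication is false for general firmly nonexpansive iterations (Genel--Lindenstrauss, Hundal), so linearity must enter in an essential, quantitative way. A complete proof needs one of the real arguments: for $n=2$, write $(\projection_2\projection_1)^{k+1}=\projection_2 A^k$ with $A\defas\projection_1\projection_2\projection_1$ self-adjoint and $0\le A\le I$, and use the spectral theorem to conclude that $A^k$ converges strongly to the orthogonal projection onto $\ker(I-A)=\schwarzSubspace_1\cap\schwarzSubspace_2$; for general $n$, Halperin's induction on the number of subspaces (or an equivalent quantitative estimate) is required, and it is not a routine reduction to the two-subspace case. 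Until one of these is supplied, the decisive step of the theorem is missing from your proof.
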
~\\
The version of Theorem \ref{theo:LionsTheorem} for two domains (see \cite[Theorem I.1]{LionsSchwarz1}) has also been used to show the convergence of the multiplicative Schwarz method for an energy-based Local-to-Nonlocal coupling (see \cite{AcostaDD}).
\begin{remark}
    The second part of \cite[Theorem I.2]{LionsSchwarz1} states that if $\schwarzSubspace = \oplus_{i=1}^n \schwarzSubspace_i$ holds, then there exists a constant $\epsilon \in (0,1)$ such that
    \begin{align*}
        ||\weakSol^l - \weakSol||_{\schwarzSubspace} \leq \epsilon^l ||\weakSol^0 - \weakSol||_{\schwarzSubspace}.
    \end{align*}
    Linear convergence of the Schwarz methods in the nonlocal framework of Chapter \ref{chap:schwarz_methods} can also be observed in the numerical experiments in Chapter \ref{chap:numerical_experiments} and is proven in the discrete case(see Remark \ref{remark:conv_discrete_schwarz}). However, the proof of the assumption $\schwarzSubspace = \oplus_{i=1}^n \schwarzSubspace_i$ is left to future work.
\end{remark}
\subsection{Well-posedness of the Multiplicative Schwarz Method for Nonlocal Dirichlet Problems}
\label{well-posedness_dirichlet}
Here in this subsection, we assume the kernel $\kernel$ to be as described in Remark \ref{remark:examples_well_posedness}, i.e., $\kernel$ fulfills conditions (K1) and (K2) and is either a singular symmetric or an integrable symmetric kernel.
As seen in \eqref{weak_formulation_homogeneous} we only need to consider the case $g = 0$ on $\nlBound$ since the case $g \neq 0$ on $\nlBound$ is a direct consequence.
Given Algorithm \ref{alg:multiplicative_schwarz} we can define for every subproblem in \eqref{schwarz_formulation} an operator
\begin{align*} 
    \schwarzOp^f_i: \left(\testSpace(\nlDom \cup \nlBound), |\cdot|_{\trialSpace(\nlDom \cup \nlBound)} \right) \rightarrow \left(\testSpace(\nlDom \cup \nlBound), |\cdot|_{\trialSpace(\nlDom \cup \nlBound)} \right),
\end{align*} such that $\schwarzOp^f_i(g) = \weakSol_i$ and $\weakSol_i:\nlDom_i \cup \extnlBound_i \rightarrow \mbR$ is the weak solution of the subproblem on $\nlDom_i$  as described in \eqref{weak_formulation_subdomain} given $g$ and $f$, i.e., $\schwarzOp_i$ maps the boundary data $g$ onto the solution $\weakSol_i$ on $\nlDom_i$.\\
We start by showing the convergence for $f \equiv 0$.
In this case, we set $\schwarzOp_i \defas \schwarzOp^f_i$.
\begin{lemma}
The operator $\schwarzOp_i$ is linear and bounded as follows
\begin{align*}
	||\weakSol_i||_{\trialSpace(\completeDom)} \leq  2| g|_{\trialSpace(\completeDom)} \text{ for } \weakSol_i = \schwarzOp_i(g).
\end{align*}	
\end{lemma}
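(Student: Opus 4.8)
The plan is to exploit that, for $f\equiv 0$, the subdomain problem \eqref{weak_formulation_subdomain} is a linear, homogeneous-source problem whose solution depends linearly on the boundary datum $g$, and then to read off the bound from a single energy test. First I would settle well-definedness and linearity of $\schwarzOp_i$. Well-posedness of \eqref{weak_formulation_subdomain} follows exactly as for the global problem: under (K1), (K2) and the hypotheses of Remark \ref{remark:examples_well_posedness}, the seminorm $|\cdot|_{\trialSpace(\completeDom)}$ is a genuine norm on $\testSpace(\nlDom_i\cup\extnlBound_i)$ (equivalent to the $L^2$-norm, resp. the $H^s$-seminorm), and $\varOp$ restricted there is a bounded, coercive, symmetric bilinear form, so Lax--Milgram/Riesz yields a unique solution. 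For linearity, given data $g^{(1)},g^{(2)}$ with solutions $\weakSol_i^{(1)},\weakSol_i^{(2)}$ and scalars $a,b$, the combination $a\weakSol_i^{(1)}+b\weakSol_i^{(2)}$ agrees with $ag^{(1)}+bg^{(2)}$ on $\extnlBound_i$, its difference from that datum lies in the linear space $\testSpace(\nlDom_i\cup\extnlBound_i)$, and $\varOp(a\weakSol_i^{(1)}+b\weakSol_i^{(2)},\advar_i)=0$ for every test function by bilinearity; uniqueness then forces $\schwarzOp_i(ag^{(1)}+bg^{(2)})=a\weakSol_i^{(1)}+b\weakSol_i^{(2)}$.

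For the bound I would split $\weakSol_i=\tilde{\weakSol}_i+g$ with $\tilde{\weakSol}_i\defas\weakSol_i-g\in\testSpace(\nlDom_i\cup\extnlBound_i)$, which is admissible by the constraint in \eqref{weak_formulation_subdomain}. Testing \eqref{weak_formulation_subdomain} with $\advar_i=\tilde{\weakSol}_i$ (legal, since $\tilde{\weakSol}_i$ is itself a test function) and using $\varForce\equiv 0$ gives $\varOp(\weakSol_i,\tilde{\weakSol}_i)=0$, hence $\varOp(\tilde{\weakSol}_i,\tilde{\weakSol}_i)=-\varOp(g,\tilde{\weakSol}_i)$. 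Since $\varOp$ is symmetric and positive semidefinite, the Cauchy--Schwarz inequality $|\varOp(g,\tilde{\weakSol}_i)|\le |g|_{\trialSpace(\completeDom)}\,|\tilde{\weakSol}_i|_{\trialSpace(\completeDom)}$ applies and yields $|\tilde{\weakSol}_i|_{\trialSpace(\completeDom)}\le |g|_{\trialSpace(\completeDom)}$. The triangle inequality then gives $|\weakSol_i|_{\trialSpace(\completeDom)}\le |\tilde{\weakSol}_i|_{\trialSpace(\completeDom)}+|g|_{\trialSpace(\completeDom)}\le 2|g|_{\trialSpace(\completeDom)}$, the asserted estimate in the energy seminorm, which is the natural norm of the Hilbert space on which $\schwarzOp_i$ acts. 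To carry the left-hand side over to the full norm $\|\cdot\|_{\trialSpace(\completeDom)}=|\cdot|_{\trialSpace(\completeDom)}+\|\cdot\|_{L^2(\completeDom)}$ I would invoke the equivalence of Remark \ref{remark:examples_well_posedness}: here $g=0$ on $\nlBound$, so $\weakSol_i=g$ on $\extnlBound_i\supseteq\nlBound$ also vanishes on $\nlBound$ and lies in $\testSpace(\completeDom)$, where the $L^2$-part is controlled by the seminorm and the two norms are equivalent, so the bound persists up to the Poincar\'e constant.

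I expect the only genuinely delicate point to be the well-definedness rather than the estimate: one must ensure the subdomain seminorm is non-degenerate, i.e. that $|\cdot|_{\trialSpace(\completeDom)}$ restricted to $\testSpace(\nlDom_i\cup\extnlBound_i)$ really is a norm. This is exactly what the Poincar\'e-type equivalences in Remark \ref{remark:examples_well_posedness} provide, and it is also what guarantees uniqueness (a homogeneous datum $g=0$ forces $|\weakSol_i|_{\trialSpace(\completeDom)}=0$ and hence $\weakSol_i=0$). Finally it is worth recording that the same computation identifies $\schwarzOp_i$ as the $\varOp$-orthogonal projection onto the complement of $\testSpace(\nlDom_i\cup\extnlBound_i)$: the conditions $\tilde{\weakSol}_i=\weakSol_i-g\in\testSpace(\nlDom_i\cup\extnlBound_i)$ and $\varOp(\weakSol_i,\advar_i)=0$ for all such $\advar_i$ say precisely that $\weakSol_i\perp\testSpace(\nlDom_i\cup\extnlBound_i)$ in the $\varOp$-inner product. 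This is the structural fact that later makes Theorem \ref{theo:LionsTheorem} applicable, and it even sharpens the seminorm bound to the projection estimate $|\weakSol_i|_{\trialSpace(\completeDom)}\le |g|_{\trialSpace(\completeDom)}$.
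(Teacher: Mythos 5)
Your proof is correct and takes essentially the same route as the paper's: test the homogeneous formulation with $\tilde{\weakSol}_i = \weakSol_i - g$, use Cauchy--Schwarz for the symmetric positive semidefinite form $\varOp$ to obtain $|\tilde{\weakSol}_i|_{\trialSpace(\completeDom)} \le |g|_{\trialSpace(\completeDom)}$, and finish with the triangle inequality. Your extra observations go beyond the paper's proof, which only records the seminorm bound: you correctly note that passing to the full norm $\|\cdot\|_{\trialSpace(\completeDom)}$ requires the equivalence of Remark \ref{remark:examples_well_posedness} (so the stated constant $2$ tacitly becomes a Poincar\'e-dependent constant), and your projection remark $|\weakSol_i|_{\trialSpace(\completeDom)} \le |g|_{\trialSpace(\completeDom)}$ anticipates the paper's subsequent lemma identifying $\schwarzOp_i$ as an orthogonal projection.
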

\begin{proof}
    Since the linearity of $\schwarzOp_i$ is trivially true, we only have to proof the boundedness of $\schwarzOp_i$. As a consequence of $f \equiv 0$ and \eqref{weak_formulation_homogeneous} in the second step of the following calculation and by applying Cauchy-Schwarz in the third step  we derive
	\begin{align*}
		&|\weakSol_i - g|_{\trialSpace(\completeDom)}^2 = \varOp(\weakSol_i - g,\weakSol_i - g) \leq |A(g, \weakSol_i-g)| \\
		&\leq \sqrt{A(g,g)}\sqrt{A(\weakSol_i-g,\weakSol_i-g)} = |g|_{\trialSpace(\completeDom)} |\weakSol_i-g|_{\trialSpace(\completeDom)} \text{ and therefore } |\weakSol_i-g|_{\trialSpace(\completeDom)} \leq |g|_{\trialSpace(\completeDom)}.
	\end{align*}
	Then, we conclude
	\begin{align*}
		|\weakSol_i|_{\trialSpace(\completeDom)} \leq |\weakSol_i - g|_{\trialSpace(\completeDom)} + |g|_{\trialSpace(\completeDom)} \leq 2 |g|_{\trialSpace(\completeDom)}. 
	\end{align*}
\end{proof}~\\
Further, we define spaces
\begin{align}
    \schwarzSubspace_i(\completeDom) \defas \{\weakSol_i \in \testSpace(\completeDom): \weakSol_i = \schwarzOp_i(\weakSol_i)  \} ~~\text{ and }~~ \schwarzSubspace(\completeDom) \defas \overline{\oplus_{i=1}^n \schwarzSubspace_i(\completeDom)}, \label{def:schwarzSubspace_dirichlet}
\end{align}
that is equipped with the the inner product
\begin{align*}
    &\langle \weakSol,\advar \rangle_{\schwarzSubspace(\completeDom)} \defas \varOp\left( \weakSol, v \right) \text{ and the norm } ||\weakSol||_{\schwarzSubspace(\completeDom)} \defas \sqrt{\langle \weakSol, \weakSol \rangle}_{\schwarzSubspace(\completeDom)}.
\end{align*}
Then $\langle \cdot, \cdot \rangle_{\schwarzSubspace(\completeDom)}$ is linear in the first argument, symmetric and positive-definite, see Remark \ref{remark:examples_well_posedness}.
Here, every $\weakSol_i \in \schwarzSubspace_i(\completeDom)$ is a solution to \eqref{weak_formulation_subdomain} on $\nlDom_i$ given $\weakSol_i \in \trialSpace(\nlDom_i \cup \extnlBound_i)$ as boundary data on $\extnlBound_i$ and $f \equiv 0$.
\begin{lemma}
	\label{lemma:subspaces_dirichlet}
 Let the spaces $\schwarzSubspace_i(\completeDom)$ and $\schwarzSubspace(\completeDom)$ be as defined in \eqref{def:schwarzSubspace_dirichlet} for $i=1,...,n$. Then, the space
 $\schwarzSubspace_i(\completeDom)$ is a closed subspace for every $i=1,...,n$. Moreover, $\schwarzSubspace(\completeDom)$ is a closed subspace of $\testSpace(\completeDom)$ and therefore a Hilbert space.   
\end{lemma}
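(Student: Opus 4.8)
The plan is to recognize each $\schwarzSubspace_i(\completeDom)$ as the kernel of a bounded linear operator, and then to invoke the standard functional-analytic fact that a closed subspace of a Hilbert space is again a Hilbert space. All the analytic substance is already packaged in the preceding boundedness lemma and in Remark \ref{remark:examples_well_posedness}; the argument is essentially a bookkeeping exercise in the right topology.

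First I would treat each index $i$ separately. By definition $\schwarzSubspace_i(\completeDom) = \{\weakSol_i \in \testSpace(\completeDom): (\mathrm{Id} - \schwarzOp_i)\weakSol_i = 0\} = \ker(\mathrm{Id} - \schwarzOp_i)$. Since the previous lemma records that $\schwarzOp_i$ is linear, $\mathrm{Id} - \schwarzOp_i$ is linear and its kernel is a linear subspace. For closedness I would use the same lemma's bound: from $||\schwarzOp_i(g)||_{\trialSpace(\completeDom)} \leq 2|g|_{\trialSpace(\completeDom)}$ and $|\cdot|_{\trialSpace(\completeDom)} \leq ||\cdot||_{\trialSpace(\completeDom)}$ one gets $|\schwarzOp_i(g)|_{\trialSpace(\completeDom)} \leq 2|g|_{\trialSpace(\completeDom)}$, so $\schwarzOp_i$ is bounded, hence continuous, as an operator on $(\testSpace(\completeDom), |\cdot|_{\trialSpace(\completeDom)})$. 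Then $\mathrm{Id} - \schwarzOp_i$ is continuous and its kernel is closed. I should note here that $|\cdot|_{\trialSpace(\completeDom)}$ is a genuine norm on $\testSpace(\completeDom)$ (the seminorm is positive definite there, by Remark \ref{remark:examples_well_posedness}), so the topology is well defined.

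Second, for $\schwarzSubspace(\completeDom)$ itself, I would note that the algebraic sum $\oplus_{i=1}^n \schwarzSubspace_i(\completeDom) = \schwarzSubspace_1(\completeDom) + \cdots + \schwarzSubspace_n(\completeDom)$ is a linear subspace of $\testSpace(\completeDom)$ as a finite sum of subspaces, so its closure $\schwarzSubspace(\completeDom)$ is a closed subspace of $\testSpace(\completeDom)$ by construction. To upgrade this to a Hilbert space, I would first argue that $(\testSpace(\completeDom), \langle\cdot,\cdot\rangle_{\schwarzSubspace(\completeDom)})$ is a Hilbert space: the form $\langle\cdot,\cdot\rangle_{\schwarzSubspace(\completeDom)} = \varOp(\cdot,\cdot)$ is symmetric, bilinear and positive definite on $\testSpace(\completeDom)$ (Remark \ref{remark:examples_well_posedness}), and the induced norm $|\cdot|_{\trialSpace(\completeDom)}$ is complete on $\testSpace(\completeDom)$ by the norm equivalences recorded there (equivalence to $(L^2_c(\completeDom),||\cdot||_{L^2(\completeDom)})$ in the integrable case and to $(H^s_c(\completeDom), |\cdot|_{H^s(\completeDom)})$ in the singular case, both of which are complete). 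Since a closed subspace of a Hilbert space inherits completeness, $\schwarzSubspace(\completeDom)$ is a Hilbert space.

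I expect the only genuine obstacle to be ensuring the energy seminorm is actually a complete norm on $\testSpace(\completeDom)$, which is exactly what the well-posedness and equivalence statements of Remark \ref{remark:examples_well_posedness} provide; the subspace, kernel and closure arguments are routine. The one place to stay careful is to use the energy topology $|\cdot|_{\trialSpace(\completeDom)}$ consistently, so that the boundedness of $\schwarzOp_i$ genuinely yields closedness of $\ker(\mathrm{Id} - \schwarzOp_i)$ in the same topology in which $\schwarzSubspace(\completeDom)$ is formed.
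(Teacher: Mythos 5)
Your proof is correct and takes essentially the same route as the paper: both rest on the linearity and boundedness of $\schwarzOp_i$ established in the preceding lemma and on the completeness of $\left(\testSpace(\completeDom), |\cdot|_{\trialSpace(\completeDom)}\right)$ supplied by Remark \ref{remark:examples_well_posedness}. Your phrasing of $\schwarzSubspace_i(\completeDom)$ as $\ker(\mathrm{Id} - \schwarzOp_i)$, closed as the preimage of a closed set under a continuous map, is just a compact repackaging of the paper's explicit Cauchy-sequence argument, and your closure step for $\schwarzSubspace(\completeDom)$ matches the paper's.
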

\begin{proof}
    \begin{enumerate}
        \item $\schwarzSubspace_i(\completeDom)$ is closed under linear combinations: Given $\weakSol_i, \psi_i \in \schwarzSubspace_i(\completeDom)$ and $\lambda \in \mbR$, then $\lambda \weakSol_i + \psi_i \in \testSpace(\completeDom)$ and we get 
        \begin{align*}
            \varOp (\lambda \weakSol_i + \psi_i, \advar_i ) &= \lambda \varOp( \weakSol_i, \advar_i) + \varOp(\psi_i, \advar_i) 
            = 0 \text{ for all } \advar_i \in \testSpace(\nlDom_i \cup \hat{\nlBound}_i)\\
            &\Rightarrow \lambda \weakSol_i + \psi_i \in \schwarzSubspace_i(\completeDom).
        \end{align*}
        \item $\schwarzSubspace_i(\completeDom)$ is a closed subspace regarding $||\cdot||_{\schwarzSubspace(\completeDom)}$: Given a sequence $\{\weakSol_i^m\}_{m=1}^\infty$ such that $\weakSol_i^m \in \schwarzSubspace_i(\completeDom)$ for all $m \in \Nbb$ and  $\{\weakSol_i^m\}_{m=1}^\infty$ is a Cauchy sequence regarding $||\cdot||_{\schwarzSubspace(\completeDom)}$.
        Then, there exists a unique function $\weakSol_i \in \schwarzSubspace(\completeDom)$ with $\lim\limits_{m \rightarrow \infty}|\weakSol_i^m - \weakSol_i|_{\trialSpace(\completeDom)}=0$, because $\left( \weakSol_i^m \right)_{m=1}^\infty$ is a Cauchy sequence in the Hilbert space $\left(\testSpace(\completeDom), |\cdot|_{\trialSpace(\completeDom)}\right)$. Consequently, the continuity of $\schwarzOp_i$ implies $\weakSol_i \in \schwarzSubspace_i(\completeDom)$ since
        \begin{align*}
            |\weakSol_i - \schwarzOp_i(\weakSol_i)|_{\testSpace(\completeDom)} &\leq |\weakSol_i - \weakSol_i^m|_{\trialSpace(\completeDom)} + |\schwarzOp_i(\weakSol_i^m) - \schwarzOp_i(\weakSol_i)|_{\trialSpace(\completeDom)}\\
            &\leq 3 |\weakSol_i - \weakSol_i^m|_{\trialSpace(\completeDom)} \rightarrow 0 \quad \text{for } m \rightarrow \infty. 
        \end{align*}
        \item $\schwarzSubspace(\completeDom)$ is closed regarding linear combinations, which follows directly from the definition of $\schwarzSubspace(\completeDom)$ and the fact that every $\schwarzSubspace_i(\completeDom)$ is closed under linear combinations. Then, $\schwarzSubspace(\completeDom)$ is a Hilbert space since it is closed regarding $||\cdot ||_{\schwarzSubspace(\completeDom)}$.
    \end{enumerate}
\end{proof}~\\
We can now conclude the following:
\begin{lemma}
For $i=1,...,n$ the function $\schwarzOp_i$ is an orthogonal projection onto $\schwarzSubspace_i(\completeDom)$ regarding the inner product $\langle \cdot,\cdot \rangle_{\schwarzSubspace(\completeDom)}$.
\end{lemma}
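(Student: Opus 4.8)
The plan is to verify the standard characterization of an orthogonal projection: a linear map $P$ on a Hilbert space is the orthogonal projection onto a closed subspace $M$ precisely when its range lies in $M$ and the residual $u - Pu$ is orthogonal to $M$ for every $u$; these two properties automatically force $P^2 = P$ and self-adjointness. Here the ambient Hilbert space is $\left(\schwarzSubspace(\completeDom), \langle \cdot, \cdot \rangle_{\schwarzSubspace(\completeDom)}\right)$ with $\langle \cdot, \cdot \rangle_{\schwarzSubspace(\completeDom)} = \varOp(\cdot, \cdot)$, the target is $M = \schwarzSubspace_i(\completeDom)$ (closed by Lemma \ref{lemma:subspaces_dirichlet}), and $P = \schwarzOp_i$, whose linearity is already established. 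Throughout I would use the explicit description $\schwarzSubspace_i(\completeDom) = \{w \in \testSpace(\completeDom): \varOp(w, \advar_i) = 0 \text{ for all } \advar_i \in \testSpace(\nlDom_i \cup \hat{\nlBound}_i)\}$, which follows by unraveling the fixed-point condition $w = \schwarzOp_i(w)$ in \eqref{weak_formulation_subdomain} with $f \equiv 0$ together with the definition \eqref{def:schwarzSubspace_dirichlet}.

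First I would check the range inclusion $\schwarzOp_i(u) \in \schwarzSubspace_i(\completeDom)$ for every $u \in \schwarzSubspace(\completeDom)$. Writing $w \defas \schwarzOp_i(u)$, the boundary condition $w - u \in \testSpace(\nlDom_i \cup \hat{\nlBound}_i)$ forces $w = u$ on $\hat{\nlBound}_i \supseteq \nlBound$; since $u = 0$ on $\nlBound$, also $w = 0$ on $\nlBound$, so $w \in \testSpace(\completeDom)$. Moreover $w$ satisfies $\varOp(w, \advar_i) = 0$ for all $\advar_i \in \testSpace(\nlDom_i \cup \hat{\nlBound}_i)$ by construction, and $w - w = 0$ trivially lies in $\testSpace(\nlDom_i \cup \hat{\nlBound}_i)$, so $w$ solves its own subproblem; by the uniqueness granted by the well-posedness in Remark \ref{remark:examples_well_posedness}, $\schwarzOp_i(w) = w$, i.e. $w \in \schwarzSubspace_i(\completeDom)$.

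The heart of the argument is the orthogonality of the residual, and this is where the symmetry of $\kernel$ (hence of $\varOp$) enters decisively. For $u \in \schwarzSubspace(\completeDom)$ and any $w_i \in \schwarzSubspace_i(\completeDom)$, the boundary condition of \eqref{weak_formulation_subdomain} gives $u - \schwarzOp_i(u) \in \testSpace(\nlDom_i \cup \hat{\nlBound}_i)$, so it is an admissible test function against $w_i$. Using symmetry I would then compute $\langle u - \schwarzOp_i(u), w_i \rangle_{\schwarzSubspace(\completeDom)} = \varOp(u - \schwarzOp_i(u), w_i) = \varOp(w_i, u - \schwarzOp_i(u)) = 0$, the last equality because $w_i$ satisfies the homogeneous subproblem equation against every element of $\testSpace(\nlDom_i \cup \hat{\nlBound}_i)$. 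Hence $u - \schwarzOp_i(u) \perp \schwarzSubspace_i(\completeDom)$, and combined with the range inclusion this identifies $\schwarzOp_i$ as the orthogonal projection onto $\schwarzSubspace_i(\completeDom)$.

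I expect the only genuine subtlety to be the bookkeeping about which space each function inhabits: confirming the inclusions $\testSpace(\nlDom_i \cup \hat{\nlBound}_i) \subseteq \testSpace(\completeDom)$ (since $\hat{\nlBound}_i \supseteq \nlBound$) and $\schwarzSubspace_i(\completeDom) \subseteq \schwarzSubspace(\completeDom)$, so that every pairing in $\langle \cdot, \cdot \rangle_{\schwarzSubspace(\completeDom)}$ is well defined and $\schwarzOp_i$ genuinely maps $\schwarzSubspace(\completeDom)$ into itself. The analytic content is otherwise light: once the characterization of $\schwarzSubspace_i(\completeDom)$ and the symmetry of $\varOp$ are in hand, both the range inclusion and the residual orthogonality reduce to one-line Galerkin-orthogonality arguments, exactly as in the classical local case.
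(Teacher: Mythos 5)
Your proposal is correct and follows essentially the same route as the paper's own proof: the key step in both is that the residual $u - \schwarzOp_i(u)$ lies in $\testSpace(\nlDom_i \cup \hat{\nlBound}_i)$ and is therefore an admissible test function, while every fixed point $w_i \in \schwarzSubspace_i(\completeDom)$ satisfies the homogeneous variational equation, so the pairing $\varOp$ of the two vanishes. Your more detailed verification of the range inclusion (via uniqueness of the subproblem solution) simply fills in what the paper dismisses as ``obvious,'' and your explicit use of symmetry to swap arguments is implicit in the paper's version, which writes the inner product with $\weakSol_i$ in the first slot from the start.
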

\begin{proof}
Obviously $\schwarzOp_i(g) \in \schwarzSubspace_i(\completeDom)$ for all $g \in \schwarzSubspace(\completeDom)$. 
In order to prove the orthogonality condition $g - \schwarzOp_i(g) = \left(g -  \schwarzOp_i(g)\right)\ind_{\Omega_i} \perp \schwarzSubspace_i(\completeDom)$ we set $v_i \defas g -  \schwarzOp_i(g) \in \testSpace(\nlDom_i \cup \hat{\nlBound}_i)$, then for all $\weakSol_i \in \schwarzSubspace_i(\completeDom)$ we get
\begin{align*}
    \langle \weakSol_i, g - \schwarzOp_i(g) \rangle_{\schwarzSubspace(\completeDom)} = \varOp(\weakSol_i, v_i) = \varForce(v_i) = 0,
\end{align*}
since $v_i \in \testSpace(\nlDom_i \cup \hat{\nlBound}_i)$ can be interpreted as a test function and $\weakSol_i$ with $\weakSol_i = \schwarzOp_i(\weakSol_i)$ solves the subproblem on $\nlDom_i$ as defined in \eqref{weak_formulation_subdomain} given $\weakSol_i$ as boundary data and $f_i=0$.
\end{proof}
\begin{corollary}
	In the case of integrable symmetric or singular symmetric kernels for the solution $\weakSol \in \testSpace(\completeDom)$ of \eqref{weak_formulation_homogeneous} trivially also holds $\weakSol \in \schwarzSubspace(\completeDom)$. Therefore, the multiplicative Schwarz method yields a sequence $\left(\weakSol^l\right)_{l=1}^\infty$, where $\weakSol^l \in \schwarzSubspace(\completeDom)$ for $l \in \Nbb$, that converges to $\weakSol$ regarding $||\cdot||_{\schwarzSubspace(\completeDom)}=|\cdot|_{\trialSpace(\completeDom)}$.
\end{corollary}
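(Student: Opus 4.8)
The plan is to realize the multiplicative Schwarz iteration as the cyclic sequence of orthogonal projections appearing in Theorem \ref{theo:LionsTheorem} and then to invoke that theorem. By \eqref{weak_formulation_homogeneous} I may assume $g = 0$, so that $\weakSol \in \testSpace(\completeDom)$ is the unique solution of $\varOp(\weakSol, \advar) = \varForce(\advar)$ for all $\advar \in \testSpace(\completeDom)$. The three ingredients I need have already been supplied: the $\schwarzSubspace_i(\completeDom)$ are closed and $\schwarzSubspace(\completeDom)$ is a Hilbert space by Lemma \ref{lemma:subspaces_dirichlet}, and each $\schwarzOp_i$ is the $\varOp$-orthogonal projection onto $\schwarzSubspace_i(\completeDom)$ by the projection lemma above. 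So I set $\projection_i \defas \schwarzOp_i$.

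First the membership $\weakSol \in \schwarzSubspace(\completeDom)$. Since $\schwarzOp_i$ is an orthogonal projection, $\schwarzSubspace_i(\completeDom)^\perp = \ker \schwarzOp_i$, and a function lies in the kernel exactly when it vanishes on $\extnlBound_i$ (for such boundary data the $f\equiv 0$ solve returns $0$); that is, $\schwarzSubspace_i(\completeDom)^\perp$ is the set of functions of $\testSpace(\completeDom)$ supported in $\nlDom_i$. As the subdomains are pairwise disjoint by \eqref{decomposition}, $\bigcap_{i=1}^n \schwarzSubspace_i(\completeDom)^\perp = \{0\}$, whence $\schwarzSubspace(\completeDom) = \overline{\oplus_i \schwarzSubspace_i(\completeDom)} = \testSpace(\completeDom)$. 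In particular $\weakSol \in \schwarzSubspace(\completeDom)$ — this is the ``trivial'' membership asserted — and the same observation shows every Schwarz iterate $\weakSol^l \in \testSpace(\completeDom) = \schwarzSubspace(\completeDom)$.

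Next I linearize, since for $f \neq 0$ the subdomain solves $\schwarzOp^f_i$ are only affine. The key is the fixed-point observation behind \eqref{schwarz_formulation}: the restriction of $\weakSol$ to $\nlDom_i \cup \extnlBound_i$ solves the $i$-th subproblem \eqref{weak_formulation_subdomain} with its own trace as boundary data, so $\schwarzOp^f_i(\weakSol) = \weakSol$ for every $i$. Writing $i(l)$ for the index active at step $l$, the iterate obeys $\weakSol^l = \schwarzOp^f_{i(l)}(\weakSol^{l-1})$, and because $\schwarzOp^f_i$ has linear part $\schwarzOp_i$, subtracting the identity $\weakSol = \schwarzOp^f_{i(l)}(\weakSol)$ gives for the error $e^l \defas \weakSol^l - \weakSol$ the recursion $e^l = \schwarzOp_{i(l)}(e^{l-1})$. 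This is precisely the cyclic projection sequence of Theorem \ref{theo:LionsTheorem} with data $e^0 = \weakSol^0 - \weakSol$, so the theorem yields convergence of $(e^l)$; since $\|\cdot\|_{\schwarzSubspace(\completeDom)} = |\cdot|_{\trialSpace(\completeDom)}$ by definition, it remains only to identify the limit as $0$ to conclude $\weakSol^l \to \weakSol$ in $|\cdot|_{\trialSpace(\completeDom)}$.

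I expect this last step — pinning the limit down to $0$ — to be the real obstacle. The cyclic projections drive $e^l$ to the $\varOp$-orthogonal projection of $e^0$ onto $\bigcap_{i=1}^n \schwarzSubspace_i(\completeDom)$, so convergence to $\weakSol$ is equivalent to $\bigcap_i \schwarzSubspace_i(\completeDom) = \{0\}$: a function that is harmonic for $\nlOp$ on every subdomain and vanishes on $\nlBound$ must vanish, or dually, the $\nlDom_i$-supported functions are dense in $\testSpace(\completeDom)$. For integrable kernels this is immediate, because $|\cdot|_{\trialSpace(\completeDom)}$ is equivalent to the $L^2$-norm by Remark \ref{remark:examples_well_posedness} and indicator cut-offs remain in $\testSpace(\completeDom)$. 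For singular kernels it is delicate, since restricting a finite-energy function to a single subdomain across an interface need not preserve the $H^s$-type energy; I would isolate this as a separate density lemma and prove it by approximating an arbitrary $\advar \in \testSpace(\completeDom)$ by sums of subdomain-supported pieces, using (K1)--(K2) to control the interface contributions.
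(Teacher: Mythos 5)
Your construction is in substance the paper's own: the paper gives this corollary no proof beyond asserting that membership in $\schwarzSubspace(\completeDom)$ is trivial and that Theorem \ref{theo:LionsTheorem} then applies, with nonzero forcing handled by exactly your error recursion (that is Remark \ref{remark:conv_general_f}). What you add is correct and sharpens the paper in two places. First, the identity $\schwarzSubspace_i(\completeDom)^{\perp}=\ker\schwarzOp_i=\testSpace(\nlDom_i\cup\extnlBound_i)$ together with the disjointness of the $\nlDom_i$ gives $\schwarzSubspace(\completeDom)^{\perp}=\bigcap_i\testSpace(\nlDom_i\cup\extnlBound_i)=\{0\}$, hence $\schwarzSubspace(\completeDom)=\testSpace(\completeDom)$; this is an actual proof of the ``trivial'' membership. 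Second, you correctly observe that Theorem \ref{theo:LionsTheorem}, as stated, only produces \emph{some} limit, namely the $\varOp$-orthogonal projection of the initial error onto $\bigcap_i\schwarzSubspace_i(\completeDom)$, so that identifying the limit with $\weakSol$ requires the additional fact $\bigcap_i\schwarzSubspace_i(\completeDom)=\{0\}$, equivalently density of $\sum_i\testSpace(\nlDom_i\cup\extnlBound_i)$ in $\testSpace(\completeDom)$. The paper nowhere addresses this point; it is the real content of the corollary.

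That deferred density lemma is the genuine gap in your proposal, and you should be aware that it cannot be closed in the generality in which the corollary is stated. For integrable kernels your cut-off argument works; for singular kernels with $s<1/2$ it also works, because $\ind_{\nlDom_i}$ is a bounded pointwise multiplier on $H^s$ precisely in that range (the endpoint $s=1/2$ can still be rescued, since an interface of codimension one has zero $H^{1/2}$-capacity). For $s>1/2$, however, the lemma is false: every $\advar_i\in\testSpace(\nlDom_i\cup\extnlBound_i)$ vanishes a.e.\ on $\nlDom\setminus\nlDom_i$ and hence, being an $H^s$-function with $s>1/2$, has zero trace on the interfaces between subdomains, so $\overline{\sum_i\testSpace(\nlDom_i\cup\extnlBound_i)}$ lies in the proper closed subspace of functions with vanishing interface trace, and $\bigcap_i\schwarzSubspace_i(\completeDom)\neq\{0\}$. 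Concretely, the iterates satisfy $\weakSol^l=\weakSol^{l-1}$ a.e.\ outside $\nlDom_{i(l)}$, so their interface traces never change; since the trace is continuous with respect to $|\cdot|_{\trialSpace(\completeDom)}$ by the norm equivalence of Remark \ref{remark:examples_well_posedness}, the sequence cannot converge to $\weakSol$ unless $\weakSol^0$ already carries the correct interface trace. So your proof, once the density lemma is inserted, is complete for integrable kernels and for singular kernels with $s\le 1/2$; for $s>1/2$ both your proposed repair and the corollary itself (for arbitrary initialization, at the continuous level) fail. This defect is inherited from the paper, whose appeal to Theorem \ref{theo:LionsTheorem} silently assumes the limit is the solution; it does not contradict the discrete convergence of Remark \ref{remark:conv_discrete_schwarz}, which holds for any symmetric positive definite stiffness matrix, but one should then expect $h$-dependent deterioration of the rate when $s>1/2$.
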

\begin{remark}
	\label{remark:conv_general_f}
	For $f \nequiv 0$ the multiplicative Schwarz method yields a sequence $\{\weakSol^l\}_{l=1}^\infty$ such that for $l=km + i$ the function $\weakSol^l$ fulfills $\weakSol^l = \schwarzOp_i^f(\weakSol^{l-1})$ and $\weakSol^l - \weakSol^{l-1} \in \testSpace(\nlDom_i \cup \extnlBound_i)$, i.e., $\weakSol^l$ solves \eqref{weak_formulation_subdomain} on $\nlDom_i$ given $f$ and boundary data $\weakSol^{l-1}$. Let $\weakSol$ be the solution of the problem on the whole domain. Then for $l=km+i$ the function $\tilde{\weakSol}^l \defas \weakSol^l - \weakSol$ fulfills
	\begin{align*}
		\varOp(\tilde{\weakSol}^l, \advar_i) = \varOp(\weakSol^l - \weakSol, \advar_i) = \varForce(\advar_i) - \varForce(\advar_i) = 0 \text{ for all } \advar_i \in \testSpace(\nlDom_i \cup \hat{\nlBound}_i).
	\end{align*}
Therefore $\{\tilde{\weakSol}^l\}_{l=1}^\infty$ is a sequence generated by the multiplicative Schwarz method which converges to zero, i.e., $\lim\limits_{l \rightarrow \infty} \weakSol^l - \weakSol = 0$.
\end{remark}
\subsection{Well-posedness of the Multiplicative Schwarz Method for Nonlocal Problems with Neumann Boundary Conditions}
\label{sec:well-posedness_neumann}
As mentioned in Chapters \ref{chap:nl_prob_robin} and \ref{sec:Neumann} we only consider the kernel $\kernel$ to be symmetric in nonlocal problems of type \eqref{nlProb_NeumannBoundary}.
Next, we define 
\begin{align*}
 \schwarzOp_i^{N,f,g^N} : \left(\trialSpace(\nlDom_i, \nlBound_i^N, \extnlBound_i), ||\cdot||_{\trialSpace(\nlDom_i, \nlBound_i^N, \extnlBound_i)} \right) \rightarrow \left(\trialSpace(\nlDom_i, \nlBound_i^N, \extnlBound_i), ||\cdot||_{\trialSpace(\nlDom_i, \nlBound_i^N, \extnlBound_i)} \right)
\end{align*}
such that $\schwarzOp_i^{N,f,g^N}\left( g \right) = \weakSol_i$ and $\weakSol_i : \completeDom \rightarrow \mbR$ is the weak solution of the subproblem regarding $\nlDom_i$ as described in \eqref{neumann_weak_formulation_subdomain}. 
We will only show the convergence for $f,g^N \equiv 0$, since the convergence for $f \in L^2(\nlDom)$ and $g^N \in L^2(\nlBound)$ with $f,g^N \notequiv 0$ is a direct conclusion, which can be shown analogously to Remark \ref{remark:conv_general_f}. In this case, we set $\schwarzOp_i^N \defas \schwarzOp_i^{N,f,g^N}$.
\begin{lemma}
	The operator $\schwarzOp_i^N$ is linear and bounded by the boundary data, i.e., there exists a constant $C > 0$ with 
	\begin{align}
		\label{neumann_boundedness}
		||\weakSol_i||_{\trialSpace(\nlDom_i, \nlBound_i^N, \extnlBound_i)} \leq C || g ||_{\trialSpace(\nlDom_i, \nlBound_i^N, \extnlBound_i)}, \text{ if } \weakSol_i = \schwarzOp_i^N(g) \text{ and } g \in \trialSpace(\nlDom_i,\nlBound_i^N,\extnlBound_i).
	\end{align}
\end{lemma}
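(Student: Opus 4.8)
The plan is to mirror the argument used for the Dirichlet operator $\schwarzOp_i$, but to replace the energy seminorm by the coercive energy form that now carries the reaction term $\kappa$. Linearity of $\schwarzOp_i^N$ is immediate: with $f,g^N \equiv 0$ the subproblem \eqref{neumann_weak_formulation_subdomain} reads, after substituting $\tilde{\weakSol}_i \defas \weakSol_i - g \in \testSpace(\nlDom_i,\nlBound_i^N,\extnlBound_i)$, as $\neumannVarOp(\tilde{\weakSol}_i,\advar_i) + \int_{\nlDom_i}\kappa\tilde{\weakSol}_i\advar_i = -\big(\neumannVarOp(g,\advar_i)+\int_{\nlDom_i}\kappa g\advar_i\big)$, whose right-hand side is linear in $g$; uniqueness of $\tilde{\weakSol}_i$ (the Riesz argument of the preceding corollary, applied on $\nlDom_i$ using Theorem \ref{theorem:neumann_hilbert} and \eqref{ineq:neumann_norms}) then forces $g \mapsto \weakSol_i$ to be linear. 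For the bound I would work with the symmetric bilinear form
\[
a_i(\weakSol,\advar) \defas \neumannVarOp(\weakSol,\advar) + \int_{\nlDom_i}\kappa(\xb)\weakSol(\xb)\advar(\xb)~d\xb,
\]
noting that functions in $\trialSpace(\nlDom_i,\nlBound_i^N,\extnlBound_i)$ are defined on all of $\completeDom$ (since $\nlDom_i \cup \nlBound_i^N \cup \extnlBound_i = \completeDom$), so $\neumannVarOp$ may be kept in its symmetric full double-integral form over $(\completeDom)^2$. Because $\kernel \geq 0$ and $\kappa \geq \alpha > 0$, the form $a_i$ is symmetric and positive definite; I write $|||\cdot|||_i \defas \sqrt{a_i(\cdot,\cdot)}$ for the induced energy norm.

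First I would use that $\weakSol_i = \schwarzOp_i^N(g)$ satisfies $a_i(\weakSol_i,\advar_i) = \varForce^N(\advar_i) = 0$ for every $\advar_i \in \testSpace(\nlDom_i,\nlBound_i^N,\extnlBound_i)$, since $f,g^N \equiv 0$. As $\weakSol_i - g \in \testSpace(\nlDom_i,\nlBound_i^N,\extnlBound_i)$ is an admissible test function, the choice $\advar_i = \weakSol_i - g$ gives $a_i(\weakSol_i,\weakSol_i-g)=0$, whence
\[
|||\weakSol_i - g|||_i^2 = a_i(\weakSol_i-g,\weakSol_i-g) = -a_i(g,\weakSol_i-g) \leq |a_i(g,\weakSol_i-g)|.
\]
Applying Cauchy--Schwarz for the inner product $a_i$ yields $|||\weakSol_i - g|||_i^2 \leq |||g|||_i\,|||\weakSol_i - g|||_i$, hence $|||\weakSol_i - g|||_i \leq |||g|||_i$, and the triangle inequality gives $|||\weakSol_i|||_i \leq 2|||g|||_i$. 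This is the exact analogue of the Dirichlet estimate, with the coercive form in place of the seminorm.

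Finally I would convert this energy-norm bound into the $\trialSpace(\nlDom_i,\nlBound_i^N,\extnlBound_i)$-norm appearing in \eqref{neumann_boundedness}. The estimate leading to \eqref{ineq:neumann_norms}, applied verbatim on the subdomain by splitting the reaction integral through $\alpha \leq \kappa \leq \beta$ on $\nlDom_i$, shows
\[
\min\{\tfrac12,\alpha\}\,||\advar||_{\trialSpace(\nlDom_i,\nlBound_i^N,\extnlBound_i)}^2 \leq |||\advar|||_i^2 \leq \max\{\tfrac12,\beta\}\,||\advar||_{\trialSpace(\nlDom_i,\nlBound_i^N,\extnlBound_i)}^2 ,
\]
and chaining this with $|||\weakSol_i|||_i \leq 2|||g|||_i$ produces \eqref{neumann_boundedness} with the explicit constant $C = 2\sqrt{\max\{1/2,\beta\}/\min\{1/2,\alpha\}}$.

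The step I expect to require the most care is ensuring that $a_i$ is a genuine symmetric inner product on the subdomain space: one must invoke the symmetric full form of $\neumannVarOp$ over $(\completeDom)^2$ rather than the asymmetric reduced expression with the cross terms over $\nlDom_j \cup \nlBound_j^N$, so that the Cauchy--Schwarz step is legitimate, and one must confirm positive definiteness on the quotient space $\trialSpace(\nlDom_i,\nlBound_i^N,\extnlBound_i)$ — which is exactly what the subdomain version of \eqref{ineq:neumann_norms} provides. The remaining computations are a routine transcription of the Dirichlet proof.
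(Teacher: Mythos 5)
Your proof is correct and follows essentially the same route as the paper's: test the homogeneous subdomain formulation with $\weakSol_i - g$, apply Cauchy--Schwarz, and pass to the $\trialSpace(\nlDom_i,\nlBound_i^N,\extnlBound_i)$-norm via the equivalence \eqref{ineq:neumann_norms}. The only cosmetic difference is that you apply Cauchy--Schwarz once to the combined energy form $|||\cdot|||_i$, whereas the paper applies it separately to $\neumannVarOp$ and the reaction integral; your explicit tracking of the $\max\{1/2,\beta\}$ and $\min\{1/2,\alpha\}$ factors in the final constant is, if anything, slightly more careful than the paper's.
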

\begin{proof}
	Set $\widehat{C} \defas \max\{2, \frac{1}{\alpha}\}$ and notice that $\weakSol_i - g \in \testSpace(\nlDom_i, \nlBound_i^N, \hat{\nlBound}_i)$. As a consequence, we get by applying the norm equivalence \eqref{ineq:neumann_norms} and formulation \eqref{neumannVarFormulation_homogeneous} on the space $\trialSpace(\nlDom_i,\nlBound_i^N,\extnlBound_i)$ that
	\begin{align*}
		|| \weakSol_i - g||^2_{\trialSpace(\nlDom_i,\nlBound_i^N, \extnlBound_i)} &\leq \widehat{C} \left( \neumannVarOp(\weakSol_i - g, \weakSol_i - g) + \int_{\nlDom_i} \kappa \left(\weakSol_i - g \right)^2 ~d\xb \right) \\
		 &\leq \widehat{C} \left| \neumannVarOp(g, \weakSol_i - g) \right| + \widehat{C} \left| \int_{\nlDom_i} \kappa g \left( \weakSol_i - g \right) ~d\xb \right| \\
		&\leq \widehat{C}\sqrt{\neumannVarOp(g,g)} \sqrt{\neumannVarOp(\weakSol_i - g,\weakSol_i - g)} + \widehat{C}\sqrt{\int_{\nlDom_i} \kappa g^2 ~d\xb}\sqrt{\int_{\nlDom_i} \kappa \left( \weakSol_i - g \right)^2 ~d\xb} \\
		&\leq \widehat{C}||g||_{\trialSpace(\nlDom_i, \nlBound_i^N, \extnlBound_i)} ||\weakSol_i - g||_{\trialSpace(\nlDom_i,\nlBound_i^N,\extnlBound_i)},
	\end{align*}
where we used Cauchy-Schwarz in the third step. Then, the assertion follows from
\begin{align*}
	||\weakSol_i||_{\trialSpace(\nlDom_i, \nlBound_i^N, \extnlBound_i)} \leq || \weakSol_i - g||_{\trialSpace(\nlDom_i, \nlBound_i^N, \extnlBound_i)} + ||g||_{\trialSpace(\nlDom_i, \nlBound_i^N, \extnlBound_i)}.
\end{align*}
\end{proof}~\\
Additionally define
\begin{align}
	\schwarzSubspace_i^N(\nlDom,\nlBound,\emptyset) \defas \{ \weakSol_i \in \trialSpace(\nlDom,\nlBound,\emptyset) : \weakSol_i = \schwarzOp_i^N\left( \weakSol_i \right) \} \text{ and } \schwarzSubspace^N(\nlDom,\nlBound,\emptyset) \defas \overline{\oplus_{i=1}^n \schwarzSubspace_i^N(\nlDom,\nlBound,\emptyset)}, \label{def:schwarzSubspace_neumann}
\end{align}
which are equipped with the inner product $\langle \weakSol, \advar \rangle_{\schwarzSubspace^N(\nlDom,\nlBound,\emptyset)} \defas \neumannVarOp(\weakSol, \advar) + \int_{\nlDom} \kappa \weakSol \advar ~d\xb$ and the norm  $||\weakSol||_{\schwarzSubspace^N(\nlDom,\nlBound,\emptyset)} \defas \sqrt{\langle \weakSol, \weakSol \rangle}_{\schwarzSubspace^N(\nlDom,\nlBound,\emptyset)} \left(= |||\weakSol|||_{\trialSpace(\nlDom, \nlBound, \emptyset)} \right)$. Here, every function $\weakSol_i \in \schwarzSubspace_i^N(\nlDom,\nlBound,\emptyset)$ is a solution to \eqref{neumann_weak_formulation_subdomain} on $\nlDom_i \cup \nlBound_i^N$ given $\weakSol_i \in \trialSpace(\nlDom_i, \nlBound_i^N, \hat{\nlBound}_i)$ as boundary data on $\extnlBound_i$ and $f \equiv 0$.
\begin{lemma}
Let the spaces $\schwarzSubspace_i^N(\nlDom,\nlBound,\emptyset)$ and $\schwarzSubspace^N(\nlDom,\nlBound,\emptyset)$ be as defined in \eqref{def:schwarzSubspace_neumann} for every ${i=1,...,n}$.
Then, the space $\schwarzSubspace_i^N(\nlDom,\nlBound,\emptyset)$ is a closed subspace for $i=1,...,n$ and $\schwarzSubspace^N(\nlDom,\nlBound,\emptyset)$ is a closed subspace of $\trialSpace(\nlDom,\nlBound,\emptyset)$ and therefore a Hilbert space.
\begin{proof}
	\begin{enumerate}
		\item $\schwarzSubspace_i^N(\nlDom,\nlBound,\emptyset)$ is closed under linear combinations. Given $\weakSol, \psi \in \schwarzSubspace_i^N(\nlDom,\nlBound,\emptyset)$ and $\lambda \in \mbR$, then $\lambda \weakSol + \psi \in \trialSpace(\nlDom,\nlBound,\emptyset)$ and we get 
		\begin{align*}
			&\neumannVarOp(\lambda \weakSol + \psi, \advar_i )+ \int_{\nlDom}\kappa \advar_i \left( \lambda \weakSol + \psi \right) ~d\xb \\
			&= \lambda \left( \neumannVarOp( \weakSol, \advar_i)+  \int_{\nlDom}\kappa \advar_i \weakSol ~d\xb \right)\\
			&+ \neumannVarOp(\psi, \advar_i) + \int_{\nlDom} \kappa \advar_i \psi ~d\xb
			= 0 \text{ for all } \advar_i \in \testSpace(\nlDom_i, \nlBound_i^N,\extnlBound_i)\\
			&\Rightarrow \lambda \weakSol + \psi \in \schwarzSubspace_i^N.
		\end{align*}
		\item $\schwarzSubspace_i^N(\nlDom,\nlBound,\emptyset)$ is closed with regards to $||\cdot||_{\schwarzSubspace^N(\nlDom,\nlBound,\emptyset)}$. Given a Cauchy sequence $\{\weakSol_i^m\}_{m=1}^\infty$ regarding $||\cdot ||_{\schwarzSubspace^N(\nlDom,\nlBound,\emptyset)}$. Due to the equivalence of $||\cdot||_{\schwarzSubspace^N(\nlDom,\nlBound,\emptyset)}=|||\cdot|||_{\trialSpace(\nlDom,\nlBound,\emptyset)}$ and $||\cdot||_{\trialSpace(\nlDom,\nlBound,\emptyset)}$, there exists a function $\weakSol_i \in \trialSpace(\nlDom, \nlBound, \emptyset)$ such that
		\begin{align*}
			\lim\limits_{m \rightarrow \infty}|| \weakSol_i^m - \weakSol_i||_{\trialSpace(\nlDom, \nlBound, \emptyset)} = 0.
		\end{align*}
	Then, by using the continuity of $\schwarzOp_i^N$ we get $\weakSol_i \in \schwarzSubspace_i^N(\nlDom,\nlBound,\emptyset)$ since
 \begin{align*}
     ||\weakSol_i - \schwarzOp^N_i(\weakSol_i)||_{\trialSpace(\nlDom, \nlBound, \emptyset)} &= ||\weakSol_i - \schwarzOp_i^N(\weakSol_i)||_{\trialSpace(\nlDom_i, \nlBound^N_i, \extnlBound_i)} \\
     &\leq ||\weakSol_i - \weakSol_i^m||_{\trialSpace(\nlDom_i, \nlBound^N_i, \extnlBound_i)} + ||\schwarzOp^N(\weakSol_i^m) - \schwarzOp_i^N(\weakSol_i)||_{\trialSpace(\nlDom_i, \nlBound^N_i, \extnlBound_i)} \\ 
     &\leq \left( 1 + C \right) ||\weakSol_i - \weakSol_i^m||_{\trialSpace(\nlDom_i, \nlBound^N_i, \extnlBound_i)} \rightarrow 0 \quad m \rightarrow \infty,
 \end{align*}
 where we used that $\weakSol_i = \schwarzOp_i^N(\weakSol_i)$ on $(\completeDom) \setminus \nlDom_i$ in the first step as well as \eqref{neumann_boundedness} and the fact that ${||\cdot||_{\trialSpace(\nlDom_i,\nlBound_i^N, \extnlBound_i)} \leq ||\cdot||_{\trialSpace(\nlDom,\nlBound,\emptyset)}}$ in the last two step.
	\item Analogously to Lemma \ref{lemma:subspaces_dirichlet}.
	\end{enumerate}
\end{proof}
\end{lemma}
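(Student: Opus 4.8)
The plan is to follow the three-step structure of Lemma~\ref{lemma:subspaces_dirichlet}, substituting the Dirichlet data $(\varOp, \schwarzOp_i)$ by their Neumann--Robin counterparts: the bilinear form $\neumannVarOp(\cdot,\cdot) + \int_{\nlDom}\kappa\,\cdot\,\cdot\,d\xb$, the operator $\schwarzOp_i^N$, and the norm $||\cdot||_{\schwarzSubspace^N(\nlDom,\nlBound,\emptyset)} = |||\cdot|||_{\trialSpace(\nlDom,\nlBound,\emptyset)}$. The three ingredients I would rely on are the completeness of $\trialSpace(\nlDom,\nlBound,\emptyset)$ from Theorem~\ref{theorem:neumann_hilbert} (here $\nlBound^D=\emptyset$, so $\testSpace(\nlDom,\nlBound,\emptyset)=\trialSpace(\nlDom,\nlBound,\emptyset)$), the norm equivalence \eqref{ineq:neumann_norms}, and the boundedness estimate \eqref{neumann_boundedness} for $\schwarzOp_i^N$.

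First I would check that $\schwarzSubspace_i^N(\nlDom,\nlBound,\emptyset)$ is closed under linear combinations. By the linearity of $\schwarzOp_i^N$ established in the preceding lemma, the fixed-point condition $\weakSol_i = \schwarzOp_i^N(\weakSol_i)$ is equivalent to the homogeneous identity $\neumannVarOp(\weakSol_i,\advar_i) + \int_{\nlDom}\kappa\weakSol_i\advar_i\,d\xb = 0$ holding for all $\advar_i \in \testSpace(\nlDom_i,\nlBound_i^N,\extnlBound_i)$. Since both $\neumannVarOp$ and the $\kappa$-term are linear in their first argument, for $\weakSol_i,\psi_i \in \schwarzSubspace_i^N(\nlDom,\nlBound,\emptyset)$ and $\lambda \in \mbR$ the combination $\lambda\weakSol_i + \psi_i$ again satisfies this homogeneous identity and hence lies in the space.

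Next I would establish norm-closedness of each $\schwarzSubspace_i^N(\nlDom,\nlBound,\emptyset)$. Given a Cauchy sequence $\{\weakSol_i^m\}_m$ in this space, the equivalence \eqref{ineq:neumann_norms} lets me view it as Cauchy in the complete space $(\trialSpace(\nlDom,\nlBound,\emptyset), ||\cdot||_{\trialSpace(\nlDom,\nlBound,\emptyset)})$, giving a limit $\weakSol_i$. To show $\weakSol_i = \schwarzOp_i^N(\weakSol_i)$, I would bound $||\weakSol_i - \schwarzOp_i^N(\weakSol_i)||$ by inserting $\weakSol_i^m = \schwarzOp_i^N(\weakSol_i^m)$, applying the triangle inequality, the boundedness \eqref{neumann_boundedness}, and the estimate $||\cdot||_{\trialSpace(\nlDom_i,\nlBound_i^N,\extnlBound_i)} \leq ||\cdot||_{\trialSpace(\nlDom,\nlBound,\emptyset)}$, to arrive at a bound of the form $(1+C)||\weakSol_i - \weakSol_i^m||_{\trialSpace(\nlDom_i,\nlBound_i^N,\extnlBound_i)} \to 0$.

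Finally, the claim for $\schwarzSubspace^N(\nlDom,\nlBound,\emptyset)$ is almost immediate: by definition it is the closure of $\oplus_{i=1}^n\schwarzSubspace_i^N(\nlDom,\nlBound,\emptyset)$, hence closed in $\trialSpace(\nlDom,\nlBound,\emptyset)$; as the closure of a sum of subspaces it is a linear subspace, and a closed subspace of a Hilbert space is itself a Hilbert space. I expect the only real obstacle to be the norm-closedness step: one must be careful to interpret $\schwarzOp_i^N$ as sending a function's own trace on $\extnlBound_i$ to the subdomain solution, and to pass correctly between the global norm and the local norm on $\trialSpace(\nlDom_i,\nlBound_i^N,\extnlBound_i)$ so that \eqref{neumann_boundedness} may be invoked.
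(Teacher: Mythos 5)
Your proposal is correct and follows essentially the same route as the paper's own proof: closure under linear combinations via linearity of $\neumannVarOp(\cdot,\advar_i)+\int_{\nlDom}\kappa\,\cdot\,\advar_i~d\xb$ in the first argument, norm-closedness via the equivalence \eqref{ineq:neumann_norms}, the completeness from Theorem~\ref{theorem:neumann_hilbert}, the triangle-inequality argument with the bound \eqref{neumann_boundedness} yielding the factor $(1+C)$, and the final step exactly as in Lemma~\ref{lemma:subspaces_dirichlet}. No substantive differences or gaps.
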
~\\
Lastly, we can deduce the following:
\begin{lemma}
	The function $\schwarzOp_i^N$ is an orthogonal projection onto $\schwarzSubspace_i^N(\nlDom,\nlBound,\emptyset)$ regarding the inner product $\langle \cdot, \cdot \rangle_{\schwarzSubspace^N(\nlDom,\nlBound,\emptyset)}$ for $i=1,...,n$.
\end{lemma}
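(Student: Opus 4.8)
The plan is to mirror, almost verbatim, the two-step argument used for the Dirichlet operator $\schwarzOp_i$ in the preceding lemma, only replacing the form $\varOp$ by $\neumannVarOp(\cdot,\cdot) + \int_\nlDom \kappa\, \cdot\, \cdot\, d\xb$ and the functional $\varForce$ by $\varForce^N$. Recall that we work in the case $f, g^N \equiv 0$, so that $\varForce^N \equiv 0$; this vanishing is exactly what will drive the orthogonality. Since $\langle \cdot,\cdot\rangle_{\schwarzSubspace^N(\nlDom,\nlBound,\emptyset)}$ is the inner product inducing $|||\cdot|||_{\trialSpace(\nlDom,\nlBound,\emptyset)}^2$ (equivalent to $||\cdot||_{\trialSpace(\nlDom,\nlBound,\emptyset)}^2$ by \eqref{ineq:neumann_norms}) and $\schwarzSubspace_i^N(\nlDom,\nlBound,\emptyset)$ is a closed subspace by the previous lemma, the notion of orthogonal projection onto it is well defined, and it suffices to check the range/idempotence condition and the orthogonality of the residual.

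First I would verify that $\schwarzOp_i^N$ maps $\schwarzSubspace^N(\nlDom,\nlBound,\emptyset)$ into $\schwarzSubspace_i^N(\nlDom,\nlBound,\emptyset)$ and is idempotent there. For $g \in \schwarzSubspace^N(\nlDom,\nlBound,\emptyset)$ put $\weakSol_i \defas \schwarzOp_i^N(g)$. By the definition of the subproblem \eqref{neumann_weak_formulation_subdomain} we have $\weakSol_i - g \in \testSpace(\nlDom_i, \nlBound_i^N, \extnlBound_i)$, so $\weakSol_i = g$ on $\extnlBound_i$. Because the solution of \eqref{neumann_weak_formulation_subdomain} is unique (by the same Riesz argument as in the Corollary following Theorem \ref{theorem:neumann_hilbert}, now applied on $\trialSpace(\nlDom_i, \nlBound_i^N, \extnlBound_i)$) and enters only through the trace of the boundary datum on $\extnlBound_i$, feeding $\weakSol_i$ back in as boundary data reproduces the same function, i.e.\ $\schwarzOp_i^N(\weakSol_i) = \weakSol_i$. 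Hence $\weakSol_i \in \schwarzSubspace_i^N(\nlDom,\nlBound,\emptyset)$, which gives both $\operatorname{range}\schwarzOp_i^N \subseteq \schwarzSubspace_i^N(\nlDom,\nlBound,\emptyset)$ and idempotence.

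Next I would establish orthogonality of the residual. Set $v_i \defas g - \schwarzOp_i^N(g) = g - \weakSol_i$; by the previous step $v_i \in \testSpace(\nlDom_i, \nlBound_i^N, \extnlBound_i)$, so it is an admissible test function for \eqref{neumann_weak_formulation_subdomain}. Since $v_i$ vanishes on $\extnlBound_i \supseteq \nlDom \setminus \nlDom_i$, the reaction term of the inner product collapses onto the subdomain, $\int_\nlDom \kappa \weakSol_i v_i \, d\xb = \int_{\nlDom_i} \kappa \weakSol_i v_i \, d\xb$, and therefore, using the symmetry of $\neumannVarOp$, for every $\psi_i \in \schwarzSubspace_i^N(\nlDom,\nlBound,\emptyset)$
\begin{align*}
    \langle \psi_i,\, g - \schwarzOp_i^N(g) \rangle_{\schwarzSubspace^N(\nlDom,\nlBound,\emptyset)} = \neumannVarOp(\psi_i, v_i) + \int_{\nlDom_i} \kappa \psi_i v_i \, d\xb = \varForce^N(v_i) = 0,
\end{align*}
where the middle equality is the weak equation \eqref{neumann_weak_formulation_subdomain} solved by $\psi_i$ with test function $v_i$, and the last uses $f, g^N \equiv 0$. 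This yields $g - \schwarzOp_i^N(g) \perp \schwarzSubspace_i^N(\nlDom,\nlBound,\emptyset)$, and together with the range/idempotence from the previous step exhibits $\schwarzOp_i^N$ as the orthogonal projection.

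I expect the main obstacle to be bookkeeping rather than any hard estimate: one must confirm that $v_i$ genuinely lies in the homogeneous test space $\testSpace(\nlDom_i, \nlBound_i^N, \extnlBound_i)$ and that the $\kappa$-integral over $\nlDom$ appearing in the inner product coincides with the $\nlDom_i$-integral appearing in the subproblem form, both of which hinge on $v_i$ vanishing outside $\nlDom_i \cup \nlBound_i^N$. The symmetry of $\kernel$, hence of $\neumannVarOp$, is precisely what allows the inner product $\langle\cdot,\cdot\rangle_{\schwarzSubspace^N(\nlDom,\nlBound,\emptyset)}$ and the subproblem bilinear form to be identified; without it the displayed identification would break down.
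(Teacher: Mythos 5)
Your proposal is correct and follows essentially the same argument as the paper: the paper likewise sets $\advar_i \defas g - \schwarzOp_i^N(g) \in \testSpace(\nlDom_i,\nlBound_i^N,\extnlBound_i)$, uses it as a test function in the subproblem solved by each fixed point $\weakSol_i = \schwarzOp_i^N(\weakSol_i)$, and concludes orthogonality from $\varForce^N(\advar_i)=0$. The only difference is that the paper dismisses the range/idempotence step as obvious, whereas you spell out the uniqueness argument behind it — a harmless (and arguably helpful) elaboration.
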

\begin{proof}
	Obviously, $\schwarzOp_i^N(g) \in \schwarzSubspace_i^N(\nlDom,\nlBound,\emptyset)$ for all $g \in \schwarzSubspace^N(\nlDom,\nlBound,\emptyset)$.\\ 
 Set $\advar_i \defas g - \schwarzOp_i^N(g) = \left(g - \schwarzOp_i^N(g) \right)\ind_{\nlDom_i \cup \nlBound_i^N} \in \testSpace(\nlDom_i, \nlBound_i^N, \hat{\nlBound}_i)$. We now have to proof 
	\begin{align*}
		g - \schwarzOp_i^N(g)= \advar_i \perp \schwarzSubspace_i^N(\nlDom,\nlBound,\emptyset).
	\end{align*}
For all $\weakSol_i \in \schwarzSubspace_i^N(\nlDom,\nlBound,\emptyset)$ we get
\begin{align*}
	\langle \weakSol_i, g - \schwarzOp_i^N(g) \rangle_{\schwarzSubspace^N(\nlDom,\nlBound,\emptyset)}
	=\neumannVarOp(\weakSol_i,\advar_i) + \int_{\nlDom_i} \kappa\weakSol_i \advar_i = \varForce^N(\advar_i) = 0,
\end{align*}
since $\weakSol_i$ with $\weakSol_i = \schwarzOp_i^N(\weakSol_i)$ solves the variational formulation \eqref{neumann_weak_formulation_subdomain} on $\nlDom_i \cup \nlBound_i^N$ given the Dirichlet boundary data $\weakSol_i$ and $\advar_i \in \testSpace(\nlDom_i,\nlBound_i^N,\hat{\nlBound}_i)$ serves as a test function.
\end{proof}
\begin{corollary}
	The weak solution $\weakSol \in \trialSpace(\nlDom, \nlBound, \emptyset)$ of \eqref{nlProb_NeumannBoundary} is also an element of the space $\schwarzSubspace^N(\nlDom,\nlBound,\emptyset)$. As a consequence, the multiplicative Schwarz method yields a sequence $\left( \weakSol^l \right)_{l=1}^\infty$, where $\weakSol^l \in \schwarzSubspace^N(\nlDom,\nlBound,\emptyset)$ for $l \in \Nbb$, which converges to $\weakSol$ regarding
 ${||\cdot||_{\schwarzSubspace^N(\nlDom,\nlBound,\emptyset)}=|||\cdot|||_{\trialSpace(\nlDom, \nlBound, \emptyset)}}$ (or equivalently, $||\cdot||_{\trialSpace(\nlDom, \nlBound, \emptyset)}$).
\end{corollary}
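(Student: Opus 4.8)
The plan is to read this corollary as the step where the three preceding lemmas are assembled into the hypotheses of Theorem~\ref{theo:LionsTheorem} and the abstract Lions limit is then identified with the weak solution $\weakSol$. The lemmas already provide everything Lions requires: the spaces $\schwarzSubspace_i^N(\nlDom,\nlBound,\emptyset)$ are closed subspaces of the Hilbert space $\schwarzSubspace^N(\nlDom,\nlBound,\emptyset)$, and each $\schwarzOp_i^N$ is the orthogonal projection onto $\schwarzSubspace_i^N(\nlDom,\nlBound,\emptyset)$ with respect to $\langle\cdot,\cdot\rangle_{\schwarzSubspace^N(\nlDom,\nlBound,\emptyset)}$. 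Hence the only genuine work is (i) to place $\weakSol$ inside $\schwarzSubspace^N(\nlDom,\nlBound,\emptyset)$ and (ii) to show that the limit delivered by Theorem~\ref{theo:LionsTheorem} equals $\weakSol$.

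For (i) I would prove the slightly stronger fact that $\schwarzSubspace^N(\nlDom,\nlBound,\emptyset)$ is all of $\trialSpace(\nlDom,\nlBound,\emptyset)$, which makes both $\weakSol\in\schwarzSubspace^N(\nlDom,\nlBound,\emptyset)$ and $\weakSol^l\in\schwarzSubspace^N(\nlDom,\nlBound,\emptyset)$ automatic. Since $\schwarzOp_i^N$ is an orthogonal projection, the orthogonal complement of $\schwarzSubspace_i^N(\nlDom,\nlBound,\emptyset)$ equals $\ker\schwarzOp_i^N$, and a direct check shows $\schwarzOp_i^N(g)=0$ precisely when $g$ vanishes on $\extnlBound_i$; thus $\ker\schwarzOp_i^N$ is the set of functions supported on $\nlDom_i\cup\nlBound_i^N$. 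As $\{\nlDom_i\}$ partitions $\nlDom$ and $\{\nlBound_i^N\}$ partitions $\nlBound$, the intersection $\bigcap_{i=1}^n(\nlDom_i\cup\nlBound_i^N)$ is a null set, so $\bigcap_{i=1}^n\ker\schwarzOp_i^N=\{0\}$ and therefore $\schwarzSubspace^N(\nlDom,\nlBound,\emptyset)=\trialSpace(\nlDom,\nlBound,\emptyset)$.

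For (ii) I would first settle $f\equiv g^N\equiv 0$, where uniqueness forces $\weakSol=0$. The iterates obey $\weakSol^l=\schwarzOp_i^N(\weakSol^{l-1})$ for $l=km+i$, so Theorem~\ref{theo:LionsTheorem} yields a limit $\weakSol^{\infty}\in\schwarzSubspace^N(\nlDom,\nlBound,\emptyset)$; by continuity of the projections this limit satisfies $\weakSol^{\infty}=\schwarzOp_i^N(\weakSol^{\infty})$ for every $i$, i.e. $\weakSol^{\infty}\in\bigcap_{i=1}^n\schwarzSubspace_i^N(\nlDom,\nlBound,\emptyset)$. Testing the homogeneous subdomain equation for $\weakSol^{\infty}$ against $\weakSol^{\infty}\ind_{\nlDom_i\cup\nlBound_i^N}$ and summing over $i$ collapses to $\neumannVarOp(\weakSol^{\infty},\weakSol^{\infty})+\int_{\nlDom}\kappa(\weakSol^{\infty})^2\,d\xb=0$, whence $\weakSol^{\infty}=0=\weakSol$. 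The general right-hand side is reduced to this case exactly as in Remark~\ref{remark:conv_general_f}: the error $\tilde{\weakSol}^l\defas\weakSol^l-\weakSol$ coincides with $\tilde{\weakSol}^{l-1}$ outside $\nlDom_i\cup\nlBound_i^N$ and, because $\weakSol^l$ and $\weakSol$ satisfy \eqref{neumann_weak_formulation_subdomain} with the same functional $\varForce^N$, obeys $\tilde{\weakSol}^l=\schwarzOp_i^N(\tilde{\weakSol}^{l-1})$, so it is a homogeneous Schwarz sequence and tends to $0$.

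The main obstacle is step (ii): identifying the abstract limit with $\weakSol$ hinges on the collapse $\bigcap_{i=1}^n\schwarzSubspace_i^N(\nlDom,\nlBound,\emptyset)=\{0\}$ obtained from the summation argument, together with the exact cancellation of $\varForce^N$ in the error equation. The point that needs care in the Robin setting is that the subdomain Neumann operator is of type \eqref{neumann_op} rather than \eqref{vu_neumann_op}, because $\nlBound_i$ may reach into a neighbouring $\nlDom_j$; one must ensure this contribution is already carried by the subdomain bilinear form in \eqref{neumann_weak_formulation_subdomain}, so that $\weakSol^l$ and $\weakSol$ test against it identically and their difference is genuinely homogeneous. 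The remaining ingredients—closedness, the projection property and the Hilbert-space structure—are already discharged by the preceding lemmas and need only be cited.
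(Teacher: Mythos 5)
Your proposal follows the same skeleton as the paper's own (largely implicit) argument: the paper attaches no proof to this corollary, treating it as the assembly of the three preceding lemmas into the hypotheses of Theorem \ref{theo:LionsTheorem}, with the inhomogeneous case $f,g^N \not\equiv 0$ reduced to the homogeneous one exactly as in Remark \ref{remark:conv_general_f}; your reduction step is precisely that. Your step (i) is a correct strengthening that the paper never carries out: since $\schwarzOp_i^N$ is an orthogonal projection, the orthogonal complement of $\schwarzSubspace_i^N(\nlDom,\nlBound,\emptyset)$ in $\left(\trialSpace(\nlDom,\nlBound,\emptyset), \langle\cdot,\cdot\rangle_{\schwarzSubspace^N(\nlDom,\nlBound,\emptyset)}\right)$ is $\ker \schwarzOp_i^N$, which consists of the functions vanishing on $\extnlBound_i$; as $\bigcup_{i}\extnlBound_i = \completeDom$ for $n \geq 2$, the intersection of the kernels is trivial and hence $\schwarzSubspace^N(\nlDom,\nlBound,\emptyset) = \trialSpace(\nlDom,\nlBound,\emptyset)$. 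This argument uses no truncation of functions and is sound.

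The genuine gap is in step (ii). To get $\bigcap_{i}\schwarzSubspace_i^N(\nlDom,\nlBound,\emptyset) = \{0\}$ you test the homogeneous subdomain equation with $\advar_i = \weakSol^{\infty}\ind_{\nlDom_i \cup \nlBound_i^N}$, which presupposes $\weakSol^{\infty}\ind_{\nlDom_i \cup \nlBound_i^N} \in \testSpace(\nlDom_i,\nlBound_i^N,\extnlBound_i)$. Writing $S_i \defas \nlDom_i \cup \nlBound_i^N$, the $\neumannSpace$-norm of this truncation contains the cross term $2\int_{S_i} \weakSol^{\infty}(\xb)^2 \int_{(\completeDom)\setminus S_i} \kernel(\xb,\yb)~d\yb d\xb$, which is not controlled by $||\weakSol^{\infty}||_{\trialSpace(\nlDom,\nlBound,\emptyset)}$ for the kernel class of this section (measurable and symmetric only). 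Concretely, for $\kernel(\xb,\yb) = ||\xb-\yb||_2^{-d-2s}\ind_{B_\delta(\xb)}(\yb)$ with $s \geq 1/2$ the constant function $1$ lies in $\neumannSpace(\nlDom,\nlBound,\emptyset)$ while $\ind_{S_i}$ has infinite Gagliardo energy, so your test function does not exist. Moreover, this is not a removable technicality: for $s > 1/2$ every finite-energy function supported in $S_i$ has vanishing trace on the interface, so the constant $1$ stays at positive $|||\cdot|||_{\trialSpace(\nlDom,\nlBound,\emptyset)}$-distance from $\sum_i \testSpace(\nlDom_i,\nlBound_i^N,\extnlBound_i)$; by the duality you yourself invoke, $\bigcap_i \schwarzSubspace_i^N(\nlDom,\nlBound,\emptyset) \neq \{0\}$, and the limit of the successive projections then depends on $\weakSol^0$. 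Your argument does close the identification step whenever $\sup_{\xb}\int_{\completeDom}\kernel(\xb,\yb)~d\yb < \infty$ (integrable kernels), where the cross term is bounded by the $L^2$-norm. In fairness, the paper is silent on exactly this point — the identification of the Lions limit with $\weakSol$ is hidden in the word ``unique'' in Theorem \ref{theo:LionsTheorem} and in the never-verified directness of the sum $\oplus_i \schwarzSubspace_i^N(\nlDom,\nlBound,\emptyset)$ — so you have correctly located where the real work lies, but the truncation argument cannot do that work for the full kernel class the corollary claims to cover.
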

\section{Schwarz Methods for the Finite Element Formulation} \label{chap:finite_element_method}
\subsection{Finite Element Approximation}
\label{subsec:FE_Approx}
In order to solve problem \eqref{weak_formulation} we use the finite element method. Therefore we need a triangulation of finite (polyhedral) elements $\Tc^h = \{\Ec_j \}_{j=1}^J$ with vertices $\{\xb_k\}_{k=1}^K$. We assume that $\nlDom = \bigcup_{j=1}^J \Ec_j$ or $\nlDom \approx \bigcup_{j=1}^J \Ec_j$ and, for ease of exposition, that there exists a $K_\nlDom \in \Nbb$ such that $\xb_k \in \nlDom$ for $k=1,...,K_\nlDom$ and $\xb_k \in \nlBound$ for $k=K_\nlDom+1,...,K$.
Furthermore, we utilize continuous piecewise linear basis functions $\{\basisfun_k\}_{k=1}^K$ that satisfy $\basisfun_k(\xb_l)=\delta_{kl}$. In our tests we use a slightly different set of basis functions, which we describe in Section \ref{chap:splitting}. Moreover, we define finite dimensional function spaces as
\begin{align*}
    \trialSpace^h = span\{\basisfun_k: k=1,...,K\} \text{ and } \testSpace^h = span\{\basisfun_k: k=1,...,K_\nlDom\}.
\end{align*}
Thus, we can project $u \in \trialSpace(\completeDom)$ to a function $\FEu^h \in \trialSpace^h$ as follows
\begin{alignat*}{2}
   \FEu_i^h \defas u(\xb_i),\ \text{for all } i=1,...,K, &\quad \text{and}\quad & \FEu^h(\xb) = \sum_{i=1}^K \FEu_i^h \basisfun_i(\xb).
\end{alignat*}
With the projection $\FEDirData^h= \sum_{j=K_\nlDom+1}^K \FEDirData_i \basisfun_i$ of the boundary data $g$, we formulate a discretized version of the weak formulation \eqref{weak_formulation_homogeneous}:\\
Find $\FEu_\nlDom^h \in \testSpace^h$ such that
\begin{align}
    \varOp(\FEu_\nlDom^h, \basisfun_i) &= \varForce(\basisfun_i) & \text{for all } i=1,...,K_\nlDom, \nonumber\\
    \Leftrightarrow \sum_{k=1}^{K_\nlDom} \varOp(\basisfun_k,\basisfun_i)\FEu_k^h &= \FEForceEntry_i^h - \sum_{j=K_{\nlDom} + 1}^K \varOp (\basisfun_j, \basisfun_i)\FEDirData_j^h, & \text{for all } i=1,...,K_\nlDom, \label{FE_weak_formulation}
\end{align}
where $\FEForceEntry_i^h \defas \int_\nlDom f(\xb) \basisfun_i(\xb) ~d\xb$. Then, $\FEu^h = \sum_{i=1}^{K_\nlDom} \FEu_i^h\basisfun_i + \sum_{j= K_\nlDom + 1}^K \FEDirData_j^h\basisfun_j$ is called finite element approximation of \eqref{weak_formulation_homogeneous}.\\
By defining a matrices $\FEMatrix,\FEMatrix_{\nlBound}$ and a forcing vector $\FEForce$ as
\begin{align*}
    \FEForce = \left( \FEForceEntry_i^h \right)_{1 \leq i \leq K_{\nlDom}}, \FEMatrix = \left( \FEMatrixEntry_{ij} \right)_{1 \leq i,j \leq K_{\nlDom}}, \text{ and }
    \FEMatrix_{\nlBound} = \left( \FEMatrixEntry_{ij} \right)_{1 \leq i \leq K_{\nlDom}; K_{\nlDom} + 1 \leq j \leq K},
\end{align*}
where $\FEMatrixEntry_{ij} \defas \varOp(\basisfun_j, \basisfun_i)$, we can write \eqref{FE_weak_formulation} as a system of linear equations
\begin{align}\label{linear_system}
    \FEMatrix \FEu^h = \FEForce - \FEMatrix_I \FEDirData^h.
\end{align}
A detailed description on how to assemble the stiffness matrices $\FEMatrix$ and $\FEMatrix_{\nlBound}$ can be found in \cite{vollmann_cookbook}.
\begin{remark}
	The finite element formulation for the nonlocal problem with Neumann boundary conditions \eqref{neumannVarFormulation_homogeneous} can be derived in a similar fashion by replacing the bilinear operator $\varOp(\weakSol,\advar)$ with the function $\tilde{\neumannVarOp}(\weakSol, \advar) \defas \neumannVarOp(\weakSol,\advar) + \int_{\nlDom} \kappa\weakSol\advar~d\xb$. Therefore, the Schwarz algorithms for the finite element method, as described in Section \ref{chap:schwarz_finite_elements}, can be employed analogously in this case.
\end{remark}
\begin{remark}
    \label{remark:pos_def_stiff_matrix}
    In the case of a singular symmetric or an integrable symmetric kernel as introduced in Remark \ref{remark:examples_well_posedness} the corresponding variational operator $\varOp$ is symmetric and positive definite and, as a result, $\FEMatrix$ is also symmetric and positive definite. If we instead consider the nonlocal problem with Neumann boundary condition, where the kernel $\kernel$ is only assumed to be symmetric, we also derive that the corresponding bilinear operator $\tilde{\neumannVarOp}(\weakSol,\advar) = \neumannVarOp(\weakSol,\advar) + \int_{\nlDom} \kappa\weakSol\advar~d\xb$ and therefore the resulting finite element matrix is positive definite and symmetric (see Chapter \ref{chap:nl_prob_robin}). 
\end{remark}
\subsection{Splitting of Inner Boundary Basis Functions}\label{chap:splitting}
For ease of exposition, we now assume $\bar{\nlDom} = \bar{\nlDom}_1 \cup \bar{\nlDom}_2$ in this section.
Since we use the continuous Galerkin method in order to solve the variational formulation, the support of a piecewise linear nodal function $\basisfun_k$ corresponding to a vertex $\xb_k$, that lies on the boundary between $\nlDom_1$ and $\nlDom_2$, intersects with both domains, i.e., $supp(\basisfun_k) \cap \nlDom_i \neq \emptyset$, for $i=1,2$.
Therefore, we replace every one of these basis function $\basisfun_k$, whose support intersects with both domains, by two functions $\basisfun_k^1$ and $\basisfun_k^2$, that fulfill
\begin{align*}
	\basisfun_k(\xb) = \basisfun_k^1(\xb) + \basisfun^2_k(\xb) \text{ and } supp(\basisfun_k^i) \subset \bar{\nlDom}_i, \text{ for } i=1,2.
\end{align*}
Additionally, we replace $\FEu_k^h$ by two new degrees of freedom $\FEu_k^{h,i}$, for $i=1,2$. If we now follow the derivation of the finite element formulation analogously to Section \ref{subsec:FE_Approx} we get again a system of linear equations, which contains a linear equation for each $\FEu_k^{h,i}$.

\subsection{Formulation of the Schwarz Methods}
\label{chap:schwarz_finite_elements}
In this section we assume that the basis functions and the corresponding degrees of freedom are constructed as described in the previous Section \ref{chap:splitting}.
Then, in order to formulate the additive and multiplicative Schwarz methods for the linear system \eqref{linear_system} we now assign every node $\xb_k$ to a domain $\Omega_i$ according to the following index sets
\begin{align*}
    \indexSet_i &= \{j_1,...,j_{K_i}\} \subset \{1,...,K_\Omega\}, \text{ for } i=1,...,n, \text{ with}\\
    \bigcup_{i=1}^n \indexSet_i &= \{1,...,K_{\Omega}\} \text{ and } \indexSet_i \cap \indexSet_j = \emptyset, \text{ for } i,j=1,...,n \text{ and } i \neq j,
\end{align*}
where additionally the implication
    $supp (\basisfun_k) \subset \bar{\nlDom}_i \Rightarrow k \in \indexSet_i$
is fulfilled. So the index $k$ of every node $\xb_k$ is in exactly one $\indexSet_i$.
With these index sets we now define several submatrices and subvectors as follows
\begin{align*}
    \FEMatrix_{ij} &\defas \left( \FEMatrixEntry_{kl} \right)_{k \in \indexSet_i, l \in \indexSet_j}, \FEMatrix_{i\Ic} \defas \left( \FEMatrixEntry_{kl} \right)_{k \in \indexSet_i, K_{\nlDom} + 1 \leq l \leq K},\\ 
    \FEForce_i &\defas \left( \FEForceEntry_k^h \right)_{k \in \indexSet_i} \text{ and } \FEu_i \defas \left( \FEu_k^h \right)_{k \in \indexSet_i}.
\end{align*}
Then, we can rewrite \eqref{FE_weak_formulation} as
\begin{align*}
    \sum_{j=1}^n \FEMatrix_{ij} \FEu_j = \FEForce_i - \FEMatrix_{i\Ic}\FEDirData \quad \text{for all } i=1,...,K_\nlDom,.
\end{align*}
Furthermore, in the Schwarz method below we denote by $\FEu_i^k$ the solution for the subproblem on the domain $\nlDom_i$ in iteration $k$. The formulation of the additive Schwarz method is shown in Algorithm \ref{alg:FE_additive_schwarz}. In order to compute solution $\FEu_i^{k+1}$ the solutions $\FEu_j^k$ regarding the other domains are known and serve as Dirichlet boundary conditions. In contrast to \eqref{FE_weak_formulation} they are put on the right-hand side of the equation. In this case the algorithm is equivalent to a block-Jacobi method to solve the linear system \eqref{FE_weak_formulation}. \\
\begin{algorithm}[H]
\label{alg:FE_additive_schwarz}
\SetAlgoLined
\DontPrintSemicolon
\SetKwInOut{Input}{input}
\Input{$\FEu_i^0$ for $i=1,...,n$}
\For{$k=0,1,...$ until convergence}{
\For{$i=1,...,n$}{
Find $\FEu_i^{k+1}$ s.t.
\begin{align} \label{FE_additive_step}
\FEMatrix_{ii} \FEu_i^{k+1} = \FEForce_i - \FEMatrix_{i\Ic}\FEDirData - \sum_{j \neq i} \FEMatrix_{ij}\FEu_j^k
\end{align}
}
}
\caption{Additive Schwarz Method (Block-Jacobi Method)}
\end{algorithm}~\\
The description of the multiplicative Schwarz method is depicted in Algorithm \ref{alg:FE_multiplicative_schwarz}. As mentioned above, in the case of the multiplicative Schwarz formulation the most recent solution $\FEu_j^{k+1}$ for $j=1,...,i-1$ is used to solve the problem regarding $\nlDom_i$. This alternating Schwarz method is equivalent to the block-Gauß-Seidel method applied on the linear system \eqref{FE_weak_formulation}.\\
\begin{algorithm}[H]
\label{alg:FE_multiplicative_schwarz}
\SetAlgoLined
\DontPrintSemicolon
\SetKwInOut{Input}{input}
\Input{$\FEu_i^0$ for $i=1,...,n$}
\For{$k=0,1,...$ until convergence}{
\For{$i=1,...,n$}{
Find $\FEu_i^{k+1}$ s.t.
\begin{align*}
\FEMatrix_{ii} \FEu_i^{k+1} = \FEForce_i - \FEMatrix_{i\Ic}\FEDirData - \sum_{j = 1}^{i-1} \FEMatrix_{ij}\FEu_j^{k+1} - \sum_{j = i+1}^{n} \FEMatrix_{ij}\FEu_j^k
\end{align*}
}
}
\caption{Multiplicative Schwarz Method \\ (Block-Gauss-Seidel Method)}
\end{algorithm}~\\
The description and analysis of the block-Jacobi and block-Gauß-Seidel method for linear systems can be found in, e.g., \cite[Chapter 3]{hackbusch} or \cite[Chapter 4]{saad}.
\begin{remark}
    \label{remark:conv_discrete_schwarz}
    We recall from Remark \ref{remark:pos_def_stiff_matrix} that we only consider nonlocal problems, where the matrix $\FEMatrix$ is symmetric and positive definite. Then, the multiplicative Schwarz method converges linearly (see \cite[Theorem 3.53 and 3.39]{hackbusch}) and, if $n=2$, i.e., the domain $\nlDom$ is decomposed in two nonoverlapping subdomains $\nlDom_1$ and $\nlDom_2$, the additive Schwarz method is also converging in a linear fashion to the unique solution, which follows from \cite[Corollary 3.52 and Theorem 3.36]{hackbusch}. However, if $\nlDom$ is decomposed in more than two subdomains, the linear convergence of a relaxed block-Jacobi algorithm as defined in \cite[Chapter 12.5.3]{hackbusch} can be shown, if the dampening or Richardson parameter $\theta$ is small enough (see \cite[Theorem 14.5 and 14.6]{saad}).
\end{remark}
\section{Patch Tests}
\label{chap:patch_tests}
In this section we show that Schwarz methods for nonlocal Dirichlet problems, where we employ nonlocal diffusion operators, can be interpreted as a Nonlocal-to-Nonlocal coupling method, that satisfies linear, quadratic and cubic patch tests.
Analogously to \cite[Definition 1 and 2]{coupling_review}, we define the linear and higher-order patch test for the coupling of two nonlocal operators as follows:
\begin{definition}
Given two nonlocal operators $-\nlOp_1$ and $-\nlOp_2$ and a linear function \\
$u^*(\xb) \defas c_0 + c_1 \xb$, where $c_0,c_1 \in \mbRd$ and $u^*$ is a solution of 
\begin{align}
 -\nlOp_i u = 0 \text{ on } \nlDom,\quad u(\xb) = c_0 + c_1 \xb \text{ on } \nlBound \text{ for } i=1,2.  \label{patch_test_linear}
\end{align}
Then, a coupling method passes the linear patch test, if $u^*$ is also a solution of the coupled problem with the same boundary condition.
\end{definition}
\begin{definition}
Given two nonlocal operators $-\nlOp_1$ and $-\nlOp_2$, and a polynomial $u^*(\xb) \defas \sum_{|\multiIndex|=0}^{p} c_{\multiIndex} \xb^{\multiIndex}$ with degree $p \in \{2,3\}$, multi-indices $\multiIndex \in \Nbb_0^{d}$ and $c_{\multiIndex} \in \mbR$. Additionally, suppose that $u^*$ is a solution of 
\begin{align}
 -\nlOp_i u = f^{poly} \text{ on } \nlDom,\quad u(\xb) = \sum_{|\multiIndex|=0}^{p} c_{\multiIndex} \xb^{\multiIndex} \text{ on } \nlBound \text{ for } i=1,2, \label{patch_test_quadratic_cubic} 
\end{align}
where $f^{poly}$ is a polynomial with degree $p-2$. Then, a coupling method passes the quadratic(cubic) patch test, if $p=2$ ($p=3$) and $u^*$ is also a solution of the coupled problem with the same boundary condition.
\end{definition}
Given a solution $u^*$ of a nonlocal Dirichlet problem \eqref{patch_test_linear} or \eqref{patch_test_quadratic_cubic}, and the right-hand side $f^{poly}$ of the corresponding linear, quadratic or cubic patch test, we can directly follow that $u^*$ is also the solution to the Schwarz formulation \eqref{schwarz_formulation} since the Dirichlet boundary conditions on every $\nlBound_i$ holds and
\begin{align*}
    - \nlOp_i u^* = f^{poly} \text{ on } \nlDom_i \text{ for } i=1,...,n.
\end{align*}
Therefore the patch test is trivially fulfilled. Thus, we can also interpret the Schwarz method as a coupling method. As we will demonstrate in Section \ref{chap:numerical_experiments}, we can, e.g., couple a singular and a constant kernel.\\
Next, we present a class of kernels where the corresponding nonlocal operator is equivalent to the Laplace operator $- \Delta u(\xb) \defas \sum_{i=1}^d \partial_i^2 u(\xb)$ in case of polynomials up to degree three $\polynom(\xb) = \sum_{|\multiIndex|=0}^3 c_{\multiIndex}\xb^{\multiIndex}$.
Hence, given a radially symmetric kernel $\kernel_{\delta}(\xb,\yb) \defas \kernel_{\delta}(|\yb - \xb|)\ind_{B_{\delta}(\xb)}(\yb)$ that satisfies 
\begin{align}\label{requirements_kernel_patch_test}
\int_{B_{\delta}(0)} \zb_i^2 \kernel_{\delta}(|\zb|) ~d\zb = 2,
\end{align}
we derive for such polynomials $\polynom$ that
\begin{align*}
    -\nlOp \polynom(\xb) &= \int_{B_{\delta}(\xb)} \left(\polynom(\xb) - \polynom(\yb) \right)\kernel_{\delta}(\xb,\yb) ~d\yb  = \int_{B_{\delta}(0)} \left(\polynom(\xb) - \polynom(\xb + \zb) \right)\kernel_{\delta}(|\zb|) ~d\yb \\
    &= - \int_{B_{\delta}(0)} \sum_{\multiIndex=1}^3 \frac{D^{\multiIndex}\polynom(\xb)}{|\multiIndex|!}\zb^{\multiIndex} \kernel_{\delta}(|\zb|) ~d\zb
    = - \int_{B_{\delta}(0)} \frac{1}{2} \sum_{i=1}^d \partial_i^2 \polynom(\xb)\zb_i^{2} \kernel_{\delta}(|\zb|) ~d\zb = - \Delta \polynom(\xb),
\end{align*}
where we used in the last step that the function $\zb^{\multiIndex}$ is an odd function regarding one $x_i$-axis, if $|\multiIndex|$ is an odd number. Furthermore, if $\multiIndex = (\multiIndex_1,...,\multiIndex_d)$ with $|\multiIndex|=2$, $\multiIndex_i = \multiIndex_j =1$ and $i \neq j$, the function $\zb^{\multiIndex}= \zb_i \zb_j$ is an odd function regarding the $x_j$-axis. Since $B_{\delta}(0)$ is a symmetric domain regarding every $x_i$-axis, the integrals, in which these functions $\zb^{\multiIndex}$ appear, vanish. In Section \ref{chap:patch_test_experiments} we use different kernels that satisfy the conditions \eqref{requirements_kernel_patch_test} in a numerical example, where the nonlocal Schwarz formulation passes the linear patch test.
\section{Numerical Experiments} \label{chap:numerical_experiments}
In the following we numerically examine the Schwarz method for several nonlocal problems. We start with an example for the nonlocal Dirchlet problem and continue with a Neumann problem. After that we investigate the patch test for one example. In the last section we study two preconditioned GMRES versions that we compare with GMRES without preconditioner. We apply the finite element method in all experiments as mentioned in Chapter \ref{chap:finite_element_method}. Moreover, we consider the residual error regarding the Euclidean norm $||\FEMatrix  \FEu^k - \FEForceAlt||_2$, where $\FEu^k$ is the solution after the $k$-th iteration and ${\FEForceAlt \defas \FEForce - \FEMatrix_{\nlBound}\FEDirData}$. In our experiments the energy error ${||\FEu^{k}-\FEu^{k-1}||_{\FEMatrix} \defas \left(\FEu^{k} - \FEu^{k-1} \right)^\top \FEMatrix \left(\FEu^{k} - \FEu^{k-1} \right)}$ had the same behaviour as the residual error. For a more concise presentation, we therefore only discuss results involving the residual error.
\subsection{Nonlocal Dirichlet Problem}
\label{chap:dirichlet_experiments}
In our first experiment, we compute a nonlocal Dirichlet problem as described in Section \ref{sec:dirichlet_problem}. Here, we choose $\nlDom$ as depicted in Figure \ref{fig:dirichlet_problem} and $\delta = 0.1$. Thus, we have $\bar{\nlDom} = \bar{\nlDom}_1 \cup \bar{\nlDom}_2 \cup \bar{\nlDom}_3$. 
\begin{figure}[h!]
	\centering
	\def\svgwidth{0.4\textwidth}
	{
\begingroup%
  \makeatletter%
  \providecommand\color[2][]{%
    \errmessage{(Inkscape) Color is used for the text in Inkscape, but the package 'color.sty' is not loaded}%
    \renewcommand\color[2][]{}%
  }%
  \providecommand\transparent[1]{%
    \errmessage{(Inkscape) Transparency is used (non-zero) for the text in Inkscape, but the package 'transparent.sty' is not loaded}%
    \renewcommand\transparent[1]{}%
  }%
  \providecommand\rotatebox[2]{#2}%
  \newcommand*\fsize{\dimexpr\f@size pt\relax}%
  \newcommand*\lineheight[1]{\fontsize{\fsize}{#1\fsize}\selectfont}%
  \ifx\svgwidth\undefined%
    \setlength{\unitlength}{540bp}%
    \ifx\svgscale\undefined%
      \relax%
    \else%
      \setlength{\unitlength}{\unitlength * \real{\svgscale}}%
    \fi%
  \else%
    \setlength{\unitlength}{\svgwidth}%
  \fi%
  \global\let\svgwidth\undefined%
  \global\let\svgscale\undefined%
  \makeatother%
  \begin{picture}(1,1.00185182)%
    \lineheight{1}%
    \setlength\tabcolsep{0pt}%
    \put(0,0){\includegraphics[width=\unitlength,page=1]{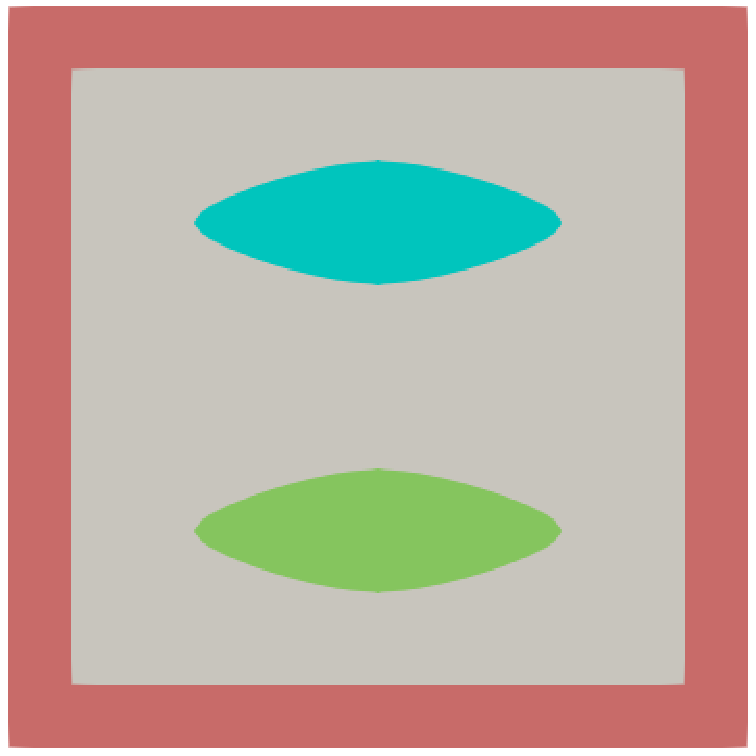}}%
    \put(0.45,0.69578661){\color[rgb]{0,0,0}\makebox(0,0)[lt]{\lineheight{1.25}\smash{\begin{tabular}[t]{l}$\nlDom_1$\end{tabular}}}}%
    \put(0.45,0.28139052){\color[rgb]{0,0,0}\makebox(0,0)[lt]{\lineheight{1.25}\smash{\begin{tabular}[t]{l}$\nlDom_2$\end{tabular}}}}%
    \put(0.8,0.475){\color[rgb]{0,0,0}\makebox(0,0)[lt]{\lineheight{1.25}\smash{\begin{tabular}[t]{l}$\nlDom_3$\end{tabular}}}}%
    \put(0.39606574,0.93){\color[rgb]{0,0,0}\makebox(0,0)[lt]{\lineheight{1.25}\smash{\begin{tabular}[t]{l}$\nlBound$\end{tabular}}}}%
  \end{picture}%
\endgroup%
}
	\caption{The domain $\nlDom = (0,1)^2$ is divided into $\nlDom_1$, $\nlDom_2$ and $\nlDom_3$. The nonlocal boundary $\nlBound$ is depicted in red.}
	\label{fig:dirichlet_problem}
\end{figure}
Further, we use the following kernel, which satisfies the requirements of a singular symmetric kernel of Remark \ref{remark:examples_well_posedness}, and we employ the subsequent piecewise constant forcing term and boundary data
\begin{align*}
	\kernel(\xb,\yb) &= \begin{cases}
		\frac{4 c_\delta}{||\xb - \yb||_2^{2 + 2s}} \ind_{B_\delta(\xb)}(\yb) & \xb,\yb \in \nlDom_1 \text{ or }\xb,\yb \in \nlDom_2, \\
		\frac{7 c_\delta}{||\xb - \yb||_2^{2 + 2s}} \ind_{B_\delta(\xb)}(\yb) & \xb,\yb \in \nlDom_3, \\
		\frac{5 c_\delta}{||\xb - \yb||_2^{2 + 2s}} \ind_{B_\delta(\xb)}(\yb) & \xb \in \nlBound \text{ or } \yb \in \nlBound, \\
		\frac{10 c_\delta}{||\xb - \yb||_2^{2 + 2s}} \ind_{B_\delta(\xb)}(\yb) & \text{else},
	\end{cases} \\
	c_\delta &= \frac{2-2s}{\pi \delta^{2-2s}},\ s=0.5,\ 
	f(\xb) = 5\ind_{\nlDom_1 \cup \nlDom_2}(\xb) + 1\ind_{\nlDom_3}(\xb) \text{ and } g = 0 \text{ on } \nlBound.
\end{align*}
Here, the kernel $\kernel$ can, e.g., be considered as a special case of the kernels investigated in \cite{delia_glusa_interface} or as a Case 1 kernel of \cite{DuAnalysis}.
The stiffness matrices for every subproblem are computed by using the python package nlfem\cite{nlfem}, which has to be done only once at the beginning of the algorithm.
Then, every subproblem is solved with LGMRES with tolerance $10^{-12}$ and the Schwarz method stops, when the residual error is below $10^{-9}$. The results for the multiplicative and additive Schwarz method are shown in Figure \ref{fig:dirichlet_results} for different choices of the mesh parameter $h$. Additionally, we can observe quadratic $h$-convergence(see Figure \ref{fig:h_convergence}).
\begin{figure}[h!]
    \centering
	\includegraphics[width=0.7\textwidth]{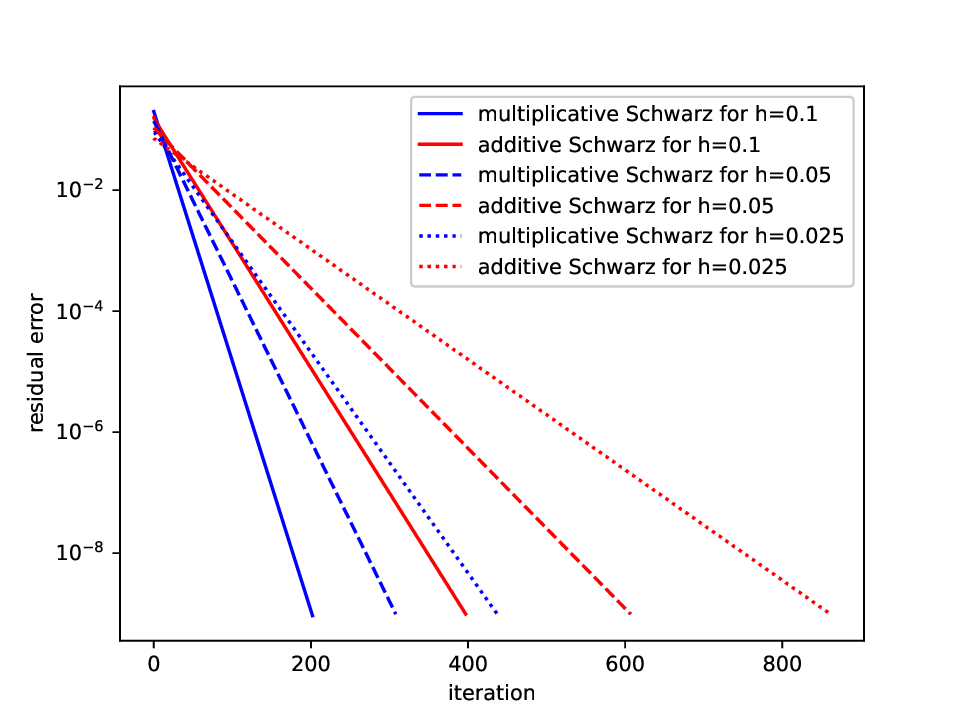}
	\caption{Here, we can see the residual error of the multiplicative and additive Schwarz method for the nonlocal Dirichlet problem with a singular symmetric kernel regarding different choices for the mesh resolution $h$. In all cases the error decreases in a linear fashion, which we expected at least for the multiplicative Schwarz version due to Remark \ref{remark:conv_discrete_schwarz}. Moreover, the additive version needs roughly twice as many iterations as the multiplicative Schwarz algorithm.}
\label{fig:dirichlet_results}
\end{figure}
\newpage
\subsection{Nonlocal Problem with Neumann Boundary}
\label{chap:neumann_experiments}
In this section we define a nonlocal problem with Neumann boundary conditions as described in \ref{sec:Neumann}, where we can have both Neumann and Dirichlet boundary conditions for a subproblem, see \eqref{schwarz_formulation_neumann}. The setting is shown in Figure \ref{fig:neumann}. 
By applying the finite element method, we also have degrees of freedom on the nonlocal boundary $\nlBound$, which we therefore divide into $\nlBound_1^{N}$ and $\nlBound_2^{N}$.
Moreover, we choose an integrable kernel $\kernel$ and $f$ as well as $\kappa$ to be piecewise constant and dependent on $\nlDom_1$ and $\nlDom_2$ as follows
\begin{align*}
	\nlDom &= \left(0, 1\right)^2,\ \delta = 0.1,\\
	\kernel(\xb, \yb) &= 
	\frac{4}{\pi \delta^4}\ind_{B_\delta(\xb)}(\yb),\\
	\kappa(\xb) &= 1 \ind_{\nlDom_1}(\xb) + 10\ind_{\nlDom_2}(\xb) \text{ and }\\
	f(\xb) &= 10 \ind_{\nlDom_1}(\xb) + 1\ind_{\Omega_2}(\xb).
\end{align*}
For these tests we only consider the multiplicative Schwarz approach(see Remark \ref{remark:add_schwarz_two_domains}), and we take the same tolerances as in the previous Chapter, i.e., tolerance $10^{-12}$ for LGMRES and $10^{-9}$ regarding the residual error as the termination criterion for the multiplicative Schwarz algorithm.
As we can observe in Figure \ref{fig:neumann_errors} and as expected in Remark \ref{remark:conv_discrete_schwarz}, the behavior of the errors corresponding to the multiplicative Schwarz method stay the same compared to Schwarz methods for nonlocal Dirichlet problems, i.e., the residual error decreases in a linear way. However, the number of needed iterations to fulfill the termination criterion stays roughly the same, which can be explained by the fact, that the condition number of the stiffness matrix in the continuous Galerkin approach(without splitting of inner boundary basis functions) is constant for integrable kernels(see \cite[Theorem 6.3]{DuAnalysis}). Lastly, we also notice quadratic $h$-convergence for this example, which is illustrated in Figure \ref{fig:h_convergence}.
\begin{figure}[h!]
	\centering
	\def\svgwidth{0.375\textwidth}
	{ 
\begingroup%
  \makeatletter%
  \providecommand\color[2][]{%
    \errmessage{(Inkscape) Color is used for the text in Inkscape, but the package 'color.sty' is not loaded}%
    \renewcommand\color[2][]{}%
  }%
  \providecommand\transparent[1]{%
    \errmessage{(Inkscape) Transparency is used (non-zero) for the text in Inkscape, but the package 'transparent.sty' is not loaded}%
    \renewcommand\transparent[1]{}%
  }%
  \providecommand\rotatebox[2]{#2}%
  \newcommand*\fsize{\dimexpr\f@size pt\relax}%
  \newcommand*\lineheight[1]{\fontsize{\fsize}{#1\fsize}\selectfont}%
  \ifx\svgwidth\undefined%
    \setlength{\unitlength}{595.27559055bp}%
    \ifx\svgscale\undefined%
      \relax%
    \else%
      \setlength{\unitlength}{\unitlength * \real{\svgscale}}%
    \fi%
  \else%
    \setlength{\unitlength}{\svgwidth}%
  \fi%
  \global\let\svgwidth\undefined%
  \global\let\svgscale\undefined%
  \makeatother%
  \begin{picture}(1,1.41428571)%
    \lineheight{1}%
    \setlength\tabcolsep{0pt}%
    \put(0,0){\includegraphics[width=\unitlength]{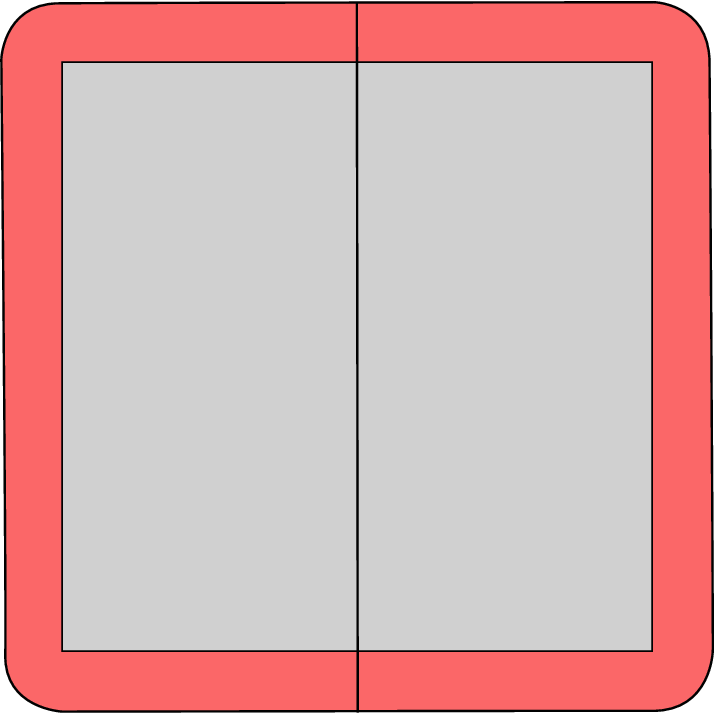}}%
    \put(0.25,0.45){\color[rgb]{0.02352941,0.01960784,0.01960784}\makebox(0,0)[lt]{\lineheight{1.25}\smash{\begin{tabular}[t]{l}$\nlDom_1$\end{tabular}}}}%
    \put(0.7,0.45){\color[rgb]{0.02352941,0.01960784,0.01960784}\makebox(0,0)[lt]{\lineheight{1.25}\smash{\begin{tabular}[t]{l}$\nlDom_2$\end{tabular}}}}%
    \put(0.7,0.025){\color[rgb]{0.02352941,0.01960784,0.01960784}\makebox(0,0)[lt]{\lineheight{1.25}\smash{\begin{tabular}[t]{l}$\nlBound_2^N$\end{tabular}}}}%
    \put(0.25,0.935){\color[rgb]{0.02352941,0.01960784,0.01960784}\makebox(0,0)[lt]{\lineheight{1.25}\smash{\begin{tabular}[t]{l}$\nlBound_1^N$\end{tabular}}}}%
  \end{picture}%
\endgroup%
}
	\caption{In this example $\nlDom$ is divided in two subdomains $\nlDom_1$ and $\nlDom_2$ and the nonlocal boundary $\nlBound$ is decomposed in $\nlBound_1^N$ and $\nlBound_2^N$.}
	\label{fig:neumann}
\end{figure}
\begin{figure}[h!]
    \centering
	\includegraphics[width=0.7\textwidth]{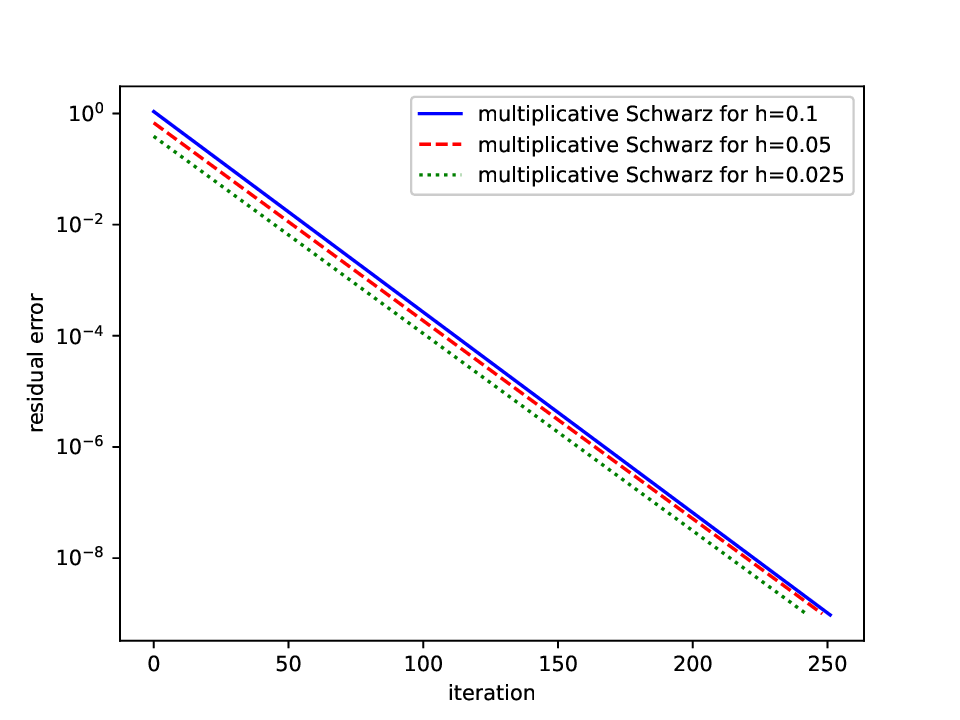}
	\caption{In this picture the residual error regarding the multiplicative Schwarz method for the nonlocal Problem with Neumann boundary condition w.r.t. a selection of mesh parameter $h$ is depicted. Again, we can observe a linear decrease in the residual error.}
	\label{fig:neumann_errors}
\end{figure}
\newpage
\subsection{Patch Test}
\label{chap:patch_test_experiments}
\begin{figure}
	\centering
	\def\svgwidth{0.4\textwidth}
	{\small 
\begingroup%
  \makeatletter%
  \providecommand\color[2][]{%
    \errmessage{(Inkscape) Color is used for the text in Inkscape, but the package 'color.sty' is not loaded}%
    \renewcommand\color[2][]{}%
  }%
  \providecommand\transparent[1]{%
    \errmessage{(Inkscape) Transparency is used (non-zero) for the text in Inkscape, but the package 'transparent.sty' is not loaded}%
    \renewcommand\transparent[1]{}%
  }%
  \providecommand\rotatebox[2]{#2}%
  \newcommand*\fsize{\dimexpr\f@size pt\relax}%
  \newcommand*\lineheight[1]{\fontsize{\fsize}{#1\fsize}\selectfont}%
  \ifx\svgwidth\undefined%
    \setlength{\unitlength}{560.00001526bp}%
    \ifx\svgscale\undefined%
      \relax%
    \else%
      \setlength{\unitlength}{\unitlength * \real{\svgscale}}%
    \fi%
  \else%
    \setlength{\unitlength}{\svgwidth}%
  \fi%
  \global\let\svgwidth\undefined%
  \global\let\svgscale\undefined%
  \makeatother%
  \begin{picture}(1,1)%
    \lineheight{1}%
    \setlength\tabcolsep{0pt}%
    \put(0,0){\includegraphics[width=\unitlength]{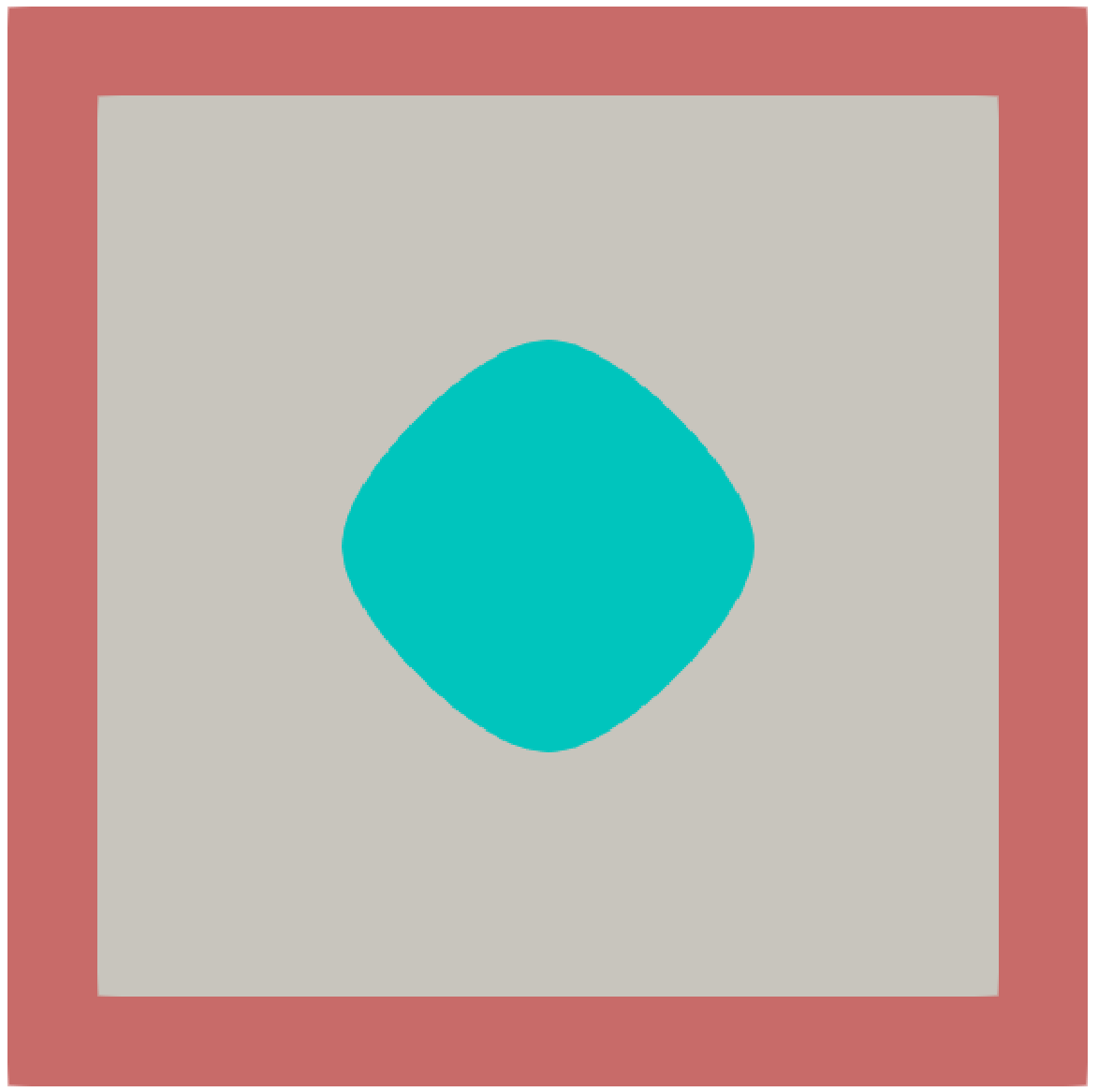}}%
    \put(0.475,0.475){\color[rgb]{0.0,0.0,0.0}\makebox(0,0)[lt]{\lineheight{1.25}\smash{\begin{tabular}[t]{l}$\nlDom_1$\end{tabular}}}}%
    \put(0.68670533,0.19740927){\color[rgb]{0,0,0}\makebox(0,0)[lt]{\lineheight{1.25}\smash{\begin{tabular}[t]{l}$\nlDom_2$\end{tabular}}}}%
    \put(0.24810692,0.93){\color[rgb]{0.0,0.0,0.0}\makebox(0,0)[lt]{\lineheight{1.25}\smash{\begin{tabular}[t]{l}$\nlBound$\end{tabular}}}}%
  \end{picture}%
\endgroup%
}
	\caption{Here, we see the decomposition of $\nlDom$ in a turquoise area $\nlDom_1$ and in a gray domain $\nlDom_2$ that we use for the patch test and for testing the (preconditioned) GMRES in Chapter \ref{chap:preconditioned_gmres}. The nonlocal boundary $\nlBound$ is again depicted in red.}
	\label{fig:patch_test_domains}
\end{figure}
Now, we conduct a linear patch regarding two nonlocal diffusion operators. We solve in our case
\begin{alignat*}{4}
	-\nlOp_1 \weakSol_1 &= 0\quad\quad &&\text{on } \nlDom_1, & -\nlOp_2 \weakSol_2 &= 0 \quad\quad&&\text{on } \nlDom_2, \\
	\weakSol_1 &= \weakSol_2 &&\text{on } \nlDom_2, \hspace{3em}\text{and}\hspace{3em}& \weakSol_2 &= \weakSol_1 &&\text{on } \nlDom_1, \\
	\weakSol_1 &= g &&\text{on } \nlBound & \weakSol_2 &= g &&\text{on } \nlBound,
\end{alignat*} where
\begin{align*}
	g(\xb) &\defas \xb_1 + \xb_2 \text{ for } \xb \in \completeDom,\ -\nlOp_i \weakSol(\xb) \defas \int_{\completeDom} \left(\weakSol(\xb) - \weakSol(\yb)\right) \kernel_i(\xb,\yb) ~d\yb \text{ with}\\
\kernel_1 (\xb, \yb) &\defas \frac{4}{\pi \delta^4} \ind_{B_{\delta}(\xb)}(\yb),\ \kernel_2 (\xb, \yb) \defas \frac{c_{\delta}}{||\xb - \yb||^{2 + 2s}} \ind_{B_{\delta}(\xb)}(\yb), ~s=0.6 \text{ and } c_{\delta} \defas \frac{2 - 2s}{\pi \delta^{2-2s}}. 
\end{align*}
Here, both kernels $\kernel_1$ and $\kernel_2$ are symmetric, but the resulting kernel $\kernel$ on the complete domain $\left(\completeDom\right) \times \left(\completeDom\right)$ with
\begin{align*}
    \kernel(\xb, \yb) = \kernel_1(\xb, \yb) \ind_{\nlDom_1}(\xb) + \kernel_2(\xb, \yb) \ind_{\nlDom_2}(\xb)
\end{align*}
is nonsymmetric.
In Figure \ref{fig:patch_test_domains} the setup is the illustrated and the starting and final solution are depicted in Figure \ref{fig:patch_test_solutions}. We again used LGMRES with tolerance $10^{-12}$ and stopped after the residual error dropped under $10^{-9}$.
\begin{figure}
	\centering
	\includegraphics[width=0.4\textwidth]{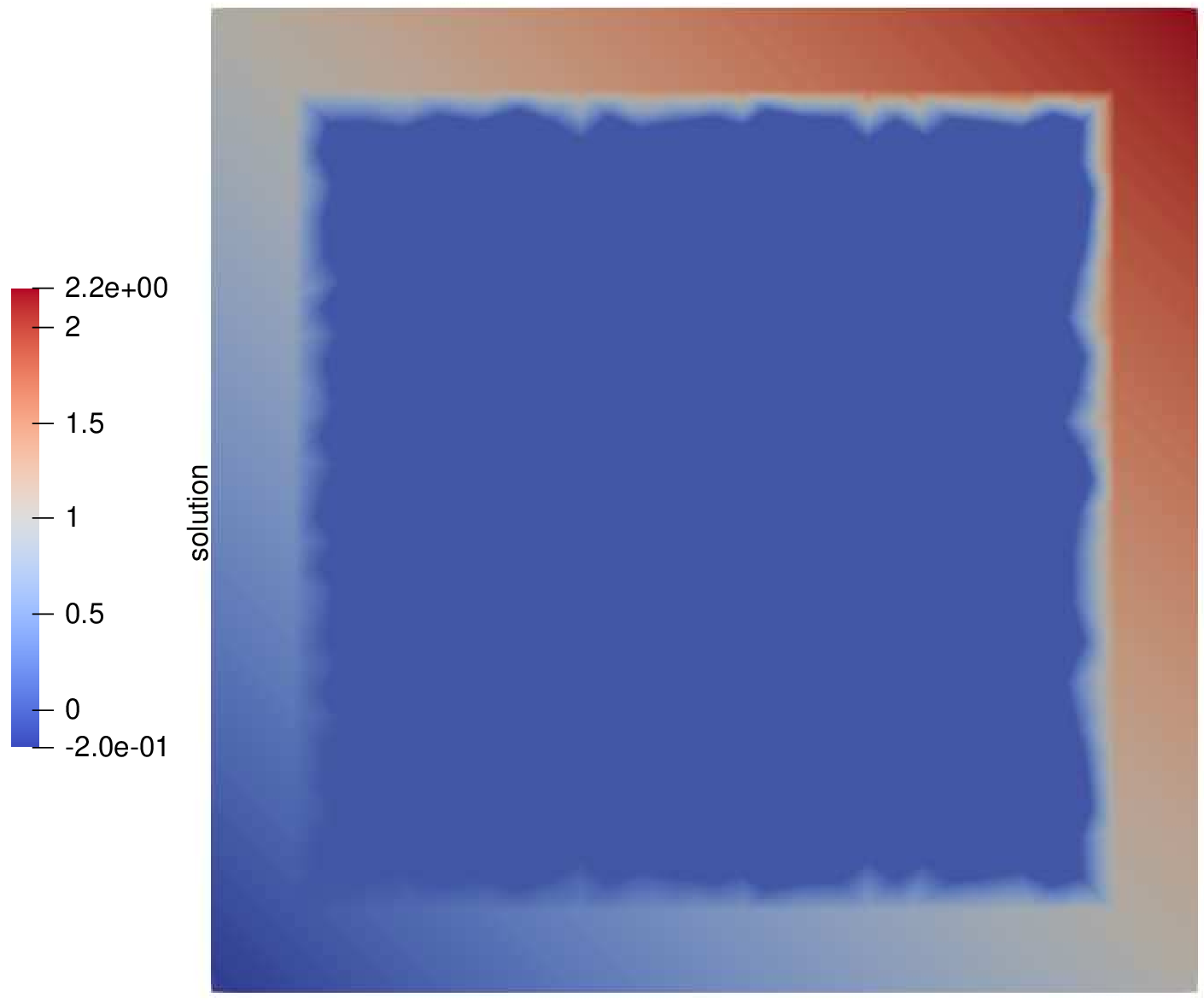}\hspace{5em}
	\includegraphics[width=0.4\textwidth]{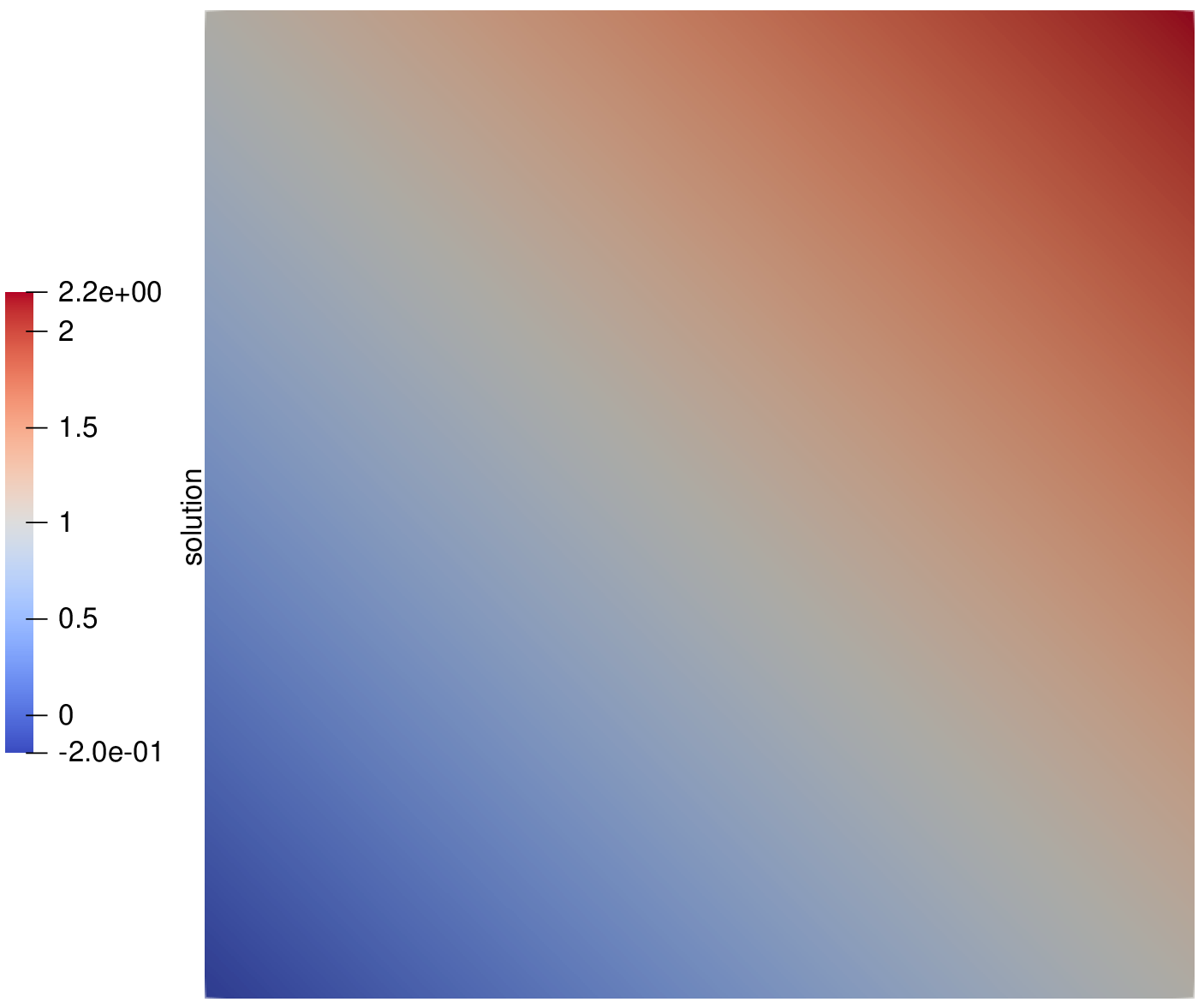}
	\caption{On the left-hand side we see the starting solution, where we have $\weakSol^0 = g$ on the nonlocal boundary $\nlBound$ and $\weakSol^0 = 0$ on the domain $\nlDom$. Additionally, the final solution is shown on the right-hand side.}
	\label{fig:patch_test_solutions}
\end{figure}
The convergence results w.r.t. the residual error can be seen in Figure \ref{fig:patch_test_convergence}, where we also observe linear convergence.
Since we know the exact solution $g$, we can examine the $L^2(\completeDom)$-norm distance of several solutions $\weakSol_h^p$ of the patch test, that depend on the choice of the mesh parameter $h$, to the the function $g_{0.01}$, which is the projection of $g$ onto the mesh with size $h=0.01$. The results are presented in Figure \ref{fig:h_convergence}. In this case, we again observe quadratic $h$-convergence.
\begin{figure}
	\centering
	\includegraphics[width=0.7\textwidth]{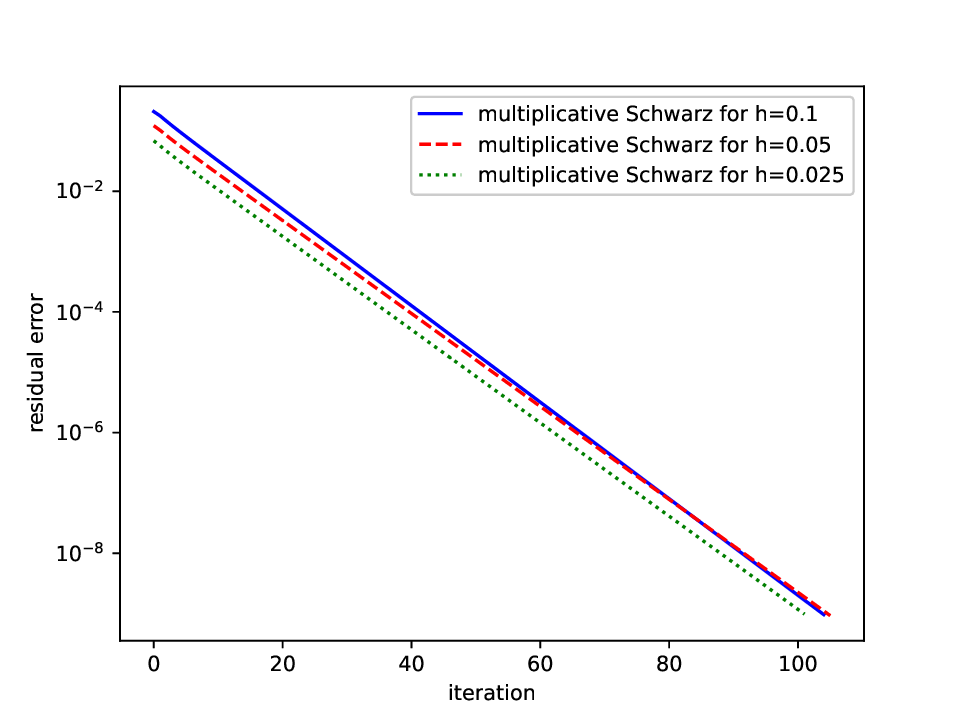}
	\caption{In case of this linear patch test, we can observe linear convergence regarding the residual error for the multiplicative Schwarz approach.}
	\label{fig:patch_test_convergence}
\end{figure}
\begin{figure}
    \centering
	\includegraphics[width=0.7\textwidth]{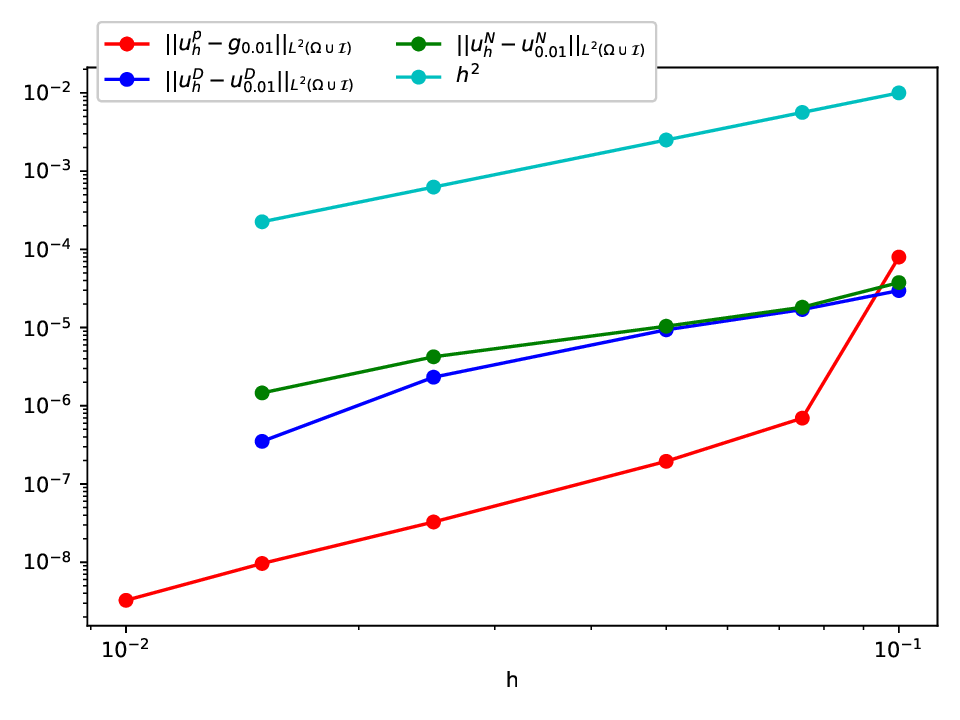}
	\caption{Here, we denote by $\weakSol_h^D$, $\weakSol_h^N$ and $\weakSol_h^p$ the final solution of the Dirichlet problem, the Neumann problem and the patch test in dependence of the mesh parameter $h$ as described in Chapters \ref{chap:dirichlet_experiments}, \ref{chap:neumann_experiments} and \ref{chap:patch_test_experiments}. In this Figure, we compare $\weakSol_h^D$ and $\weakSol_h^N$ in the $L^2(\completeDom)$-norm to the solution $\weakSol^D_{0.01}$ and $\weakSol_{0.01}^N$, respectively, where $h=0.01$. Moreover, in case of the patch test, we compute the $L^2(\completeDom)$-norm distance of $\weakSol_k^p$ to the projection of the exact solution $g$ onto the space of continuous piecewise linear basis functions regarding the mesh with size $h=0.01$, which is indicated by $g_{0.01}$. In all three cases, we can observe a quadratic convergence.}
	\label{fig:h_convergence}
\end{figure}
\subsection{Preconditioned GMRES}
\label{chap:preconditioned_gmres}
In the finite element setting the multiplicative Schwarz method is equivalent to the block-Gauß-Seidel algorithm and the additive Schwarz method corresponds to the block-Jacobi algorithm as seen in Section \ref{chap:schwarz_finite_elements}. Therefore, we make use of the block-Gauß-Seidel preconditioner $\precBGS^{-1}$ and the block-Jacobi preconditioner $\precBJ^{-1}$, that are defined as follows
\begin{align*}
	\precBGS = \begin{pmatrix}
		\FEMatrix_{11} & \cdots & 0      \\
		\vdots         & \ddots	& \vdots \\
		\FEMatrix_{1n} & \cdots & \FEMatrix_{nn}
	\end{pmatrix} \text{ and }
	\precBJ = diag(\FEMatrix_{11},...,\FEMatrix_{nn}),
\end{align*}
to precondition GMRES. For more information on these two preconditioner we refer to \cite[Chapter 4.1.2]{saad}. In the tests we compare GMRES equipped with a left preconditioner $\precBGS^{-1}$ or $\precBJ^{-1}$ to GMRES without any preconditioning.\\
In all cases we used the constant kernel $\kernel_1$ on $\nlDom_1$ and the fractional kernel $\kernel_2$ on $\nlDom_2$ of Section \ref{chap:patch_test_experiments}. Moreover, we set the boundary data $g = 0$ on $\nlBound$ and the forcing term $f = 10$ on $\nlDom$. In every test the convergence tolerance of GMRES is chosen to be $10^{-10}$. Additionally, we denote by $\kappa_{GMRES}$ the condition number of the finite element matrix $\FEMatrix$, by $\kappa_{GMRES+BJ}$ the condition number of $\precBJ^{-1}\FEMatrix$ and by $\kappa_{GMRES+BGS}$ the condition number of $\precBGS^{-1}\FEMatrix$, respectively.
\begin{table}[h!]
	\begin{center}
		\begin{tabular}{|c c c c c c c|} 
			\hline
			$h$ & GMRES & GMRES+BJ & GMRES+BGS  & $\kappa_{\text{GMRES}}$ & $\kappa_{\text{GMRES+BJ}}$ & $\kappa_{\text{GMRES+BGS}}$ \\ [0.5ex] 
			\hline\hline
			0.1 & 177 & 39 & 15 & 2715.41 & 39.79 & 18.85  \\ 
			\hline
			0.05 & 365 & 30 & 15 & 4649.59 & 42.46 & 20.15 \\
			\hline
			0.025 & 580 & 32 & 13 & 8110.49 & 45.72 & 20.53 \\
			\hline
		\end{tabular}
		\caption{For $\delta=0.1$ and $s=0.5$ we tested different mesh sizes $h$ and documented the number of required iterations as well as the condition number of the (preconditioned) matrices.}
		\label{table1}
	\end{center}
\end{table}
\begin{table}[h!]
	\begin{center}
		\begin{tabular}{|c c c c c c c|} 
			\hline
			$s$ & GMRES & GMRES+BJ & GMRES+BGS & $\kappa_{\text{GMRES}}$ & $\kappa_{\text{GMRES+BJ}}$ & $\kappa_{\text{GMRES+BGS}}$ \\ [0.5ex] 
			\hline\hline
			0.2 & 118 & 25 & 12 & 483.18 & 35.22 & 18.68 \\ 
			\hline
			0.5 & 580 & 32 & 13 & 8110.49 & 45.72 & 20.53 \\
			\hline
			0.8 & 2509 & 37 & 14 & 11687.19 & 59.13 & 22.18 \\
			\hline
		\end{tabular}
		\caption{In these experiments we set $\delta=0.1$ and the mesh size $h=0.025$ was chosen. Then, we tested different values for the parameter $s$ of the singular kernel and recorded the condition number of the (preconditoned) system matrix and the number of iterations that each version of GMRES needed.}
		\label{table2}
	\end{center}
\end{table}
\begin{table}[h!]
	\begin{center}
		\begin{tabular}{|c c c c c c c|} 
			\hline
			$\delta_2$ & GMRES & GMRES+BJ & GMRES+BGS & $\kappa_{\text{GMRES}}$ & $\kappa_{\text{GMRES+BJ}}$ & $\kappa_{\text{GMRES+BGS}}$ \\ [0.5ex] 
			\hline\hline
			0.1 & 580 & 32 & 13 & 8110.49 & 45.72 & 20.53 \\ 
			\hline
			0.05 & 1089 & 35 & 14 & 18862.67 & 57.83 & 22.24\\
			\hline
			0.025 & 1675 & 36 & 14 & 39848.76 & 63.35 & 21.58 \\
			\hline
		\end{tabular}
		\caption{In these tests we set the horizon $\delta_1=0.1$ for the constant kernel and varied the parameter $\delta_2$ of the singular kernel. Moreover, we chose $h=0.025$ and $s=0.5$ and noted the number of iterations that each GMRES variation required as well as the condition number of the accompanying (preconditioned) matrices.}
		\label{table3}
	\end{center}
\end{table}
\newpage~\\
As we can see in all tests of Tables \ref{table1} - \ref{table3} the number of required iterations and the condition number reduce significantly, if we use the block-Jacobi or block-Gauß-Seidel preconditioner instead of GMRES with no preconditioning. For the block-Gauß-Seidel preconditioner the number of iterations as well as the condition number even stay roughly the same.

 \section{Conclusion}
 We have shown in this paper how the multiplicative and additive Schwarz method can be applied to nonlocal Dirichlet problems and how they can be utilized to solve nonlocal problems with Neumann boundary condition. In the first case we showed the convergence of the multiplicative version of the Schwarz algorithm for two widely used classes of symmetric kernels and in the second case we only needed symmetric kernels to prove that the multiplicative Schwarz approach converges. 
 Additionally, coupling nonlocal operators with the Schwarz method fulfills trivially nonlocal patch tests. In the last section we provided examples for all discussed problems in this work. Additionally, we observed in the patch test experiments in Chapter \ref{chap:patch_test_experiments}, that the Schwarz approach can also work for nonsymmetric kernels in practice. Further, we investigated preconditioned GMRES variants that resulted from the multiplicative or additive Schwarz algorithm in the finite element setting, where we noticed, that especially the block-Gauß-Seidel preconditioned GMRES version only needs a small amount of iterations.

\section*{Acknowledgements}
This work has been supported by the German Research Foundation (DFG) within the Research Training Group 2126: 'Algorithmic Optimization'.

\bibliographystyle{plain}
\bibliography{literature.bib}

\begin{thebibliography}{10}

\bibitem{AcostaDD}
G.~Acosta, F.~M. Bersetche, and J.~D. Rossi.
\newblock A domain decomposition scheme for couplings between local and
  nonlocal equations.
\newblock {\em Computational Methods in Applied Mathematics}, (0), 2023.

\bibitem{brockmann2008}
D.~Brockmann.
\newblock Anomalous diffusion and the structure of human transportation
  networks.
\newblock {\em The European Physical Journal Special Topics}, 157:173--189,
  2008.

\bibitem{buades2010image}
A.~Buades, B.~Coll, and J.-M. Morel.
\newblock Image denoising methods. {A} new nonlocal principle.
\newblock {\em SIAM review}, 52(1):113--147, 2010.

\bibitem{capodaglio_energy_coupling}
G.~Capodaglio, M.~D’Elia, P.~Bochev, and M.~Gunzburger.
\newblock An energy-based coupling approach to nonlocal interface problems.
\newblock {\em Computers \& Fluids}, 207:104593, 2020.

\bibitem{delia_glusa_interface}
M.~D'Elia and C.~Glusa.
\newblock A fractional model for anomalous diffusion with increased
  variability: {A}nalysis, algorithms and applications to interface problems.
\newblock {\em Numerical Methods for Partial Differential Equations},
  38(6):2084--2103, 2022.

\bibitem{deliaAnomalousTransport}
M.~D'Elia and M.~Gulian.
\newblock Analysis of anisotropic nonlocal diffusion models: {W}ell-posedness
  of fractional problems for anomalous transport.
\newblock {\em arXiv preprint arXiv:2101.04289}, 2021.

\bibitem{DuAnalysis}
Q.~Du, M.~Gunzburger, R.~B. Lehoucq, and K.~Zhou.
\newblock Analysis and approximation of nonlocal diffusion problems with volume
  constraints.
\newblock {\em SIAM review}, 54(4):667--696, 2012.

\bibitem{delia2017nonlocal}
M.~D’Elia, Q.~Du, M.~Gunzburger, and R.~Lehoucq.
\newblock Nonlocal convection-diffusion problems on bounded domains and
  finite-range jump processes.
\newblock {\em Computational Methods in Applied Mathematics}, 17(4):707--722,
  2017.

\bibitem{vollmann_cookbook}
M.~D’Elia, M.~Gunzburger, and C.~Vollmann.
\newblock A cookbook for approximating {E}uclidean balls and for quadrature
  rules in finite element methods for nonlocal problems.
\newblock {\em Mathematical Models and Methods in Applied Sciences},
  31(08):1505--1567, 2021.

\bibitem{coupling_review}
M.~D’Elia, X.~Li, P.~Seleson, X.~Tian, and Y.~Yu.
\newblock A review of local-to-nonlocal coupling methods in nonlocal diffusion
  and nonlocal mechanics.
\newblock {\em Journal of Peridynamics and Nonlocal Modeling}, pages 1--50,
  2021.

\bibitem{delia2021bilevel}
M.~D’Elia, J.~C. De~Los Reyes, and A.~Miniguano-Trujillo.
\newblock Bilevel parameter learning for nonlocal image denoising models.
\newblock {\em Journal of Mathematical Imaging and Vision}, 63(6):753--775,
  2021.

\bibitem{glusa2023asymptotically}
C.~Glusa, M.~D’Elia, G.~Capodaglio, M.~Gunzburger, and P.~Bochev.
\newblock An asymptotically compatible coupling formulation for nonlocal
  interface problems with jumps.
\newblock {\em SIAM Journal on Scientific Computing}, 45(3):A1359--A1384, 2023.

\bibitem{hackbusch}
W.~Hackbusch.
\newblock {\em Iterative {S}olution of {L}arge {S}parse {S}ystems of
  {E}quations}, volume~95.
\newblock Springer, 2016.

\bibitem{javili2019peridynamics}
A.~Javili, R.~Morasata, E.~Oterkus, and S.~Oterkus.
\newblock Peridynamics review.
\newblock {\em Mathematics and Mechanics of Solids}, 24(11):3714--3739, 2019.

\bibitem{klar2023scalable}
M.~Klar, G.~Capodaglio, M.~D'Elia, C.~Glusa, M.~Gunzburger, and C.~Vollmann.
\newblock A scalable domain decomposition method for {FEM} discretizations of
  nonlocal equations of integrable and fractional type.
\newblock {\em arXiv preprint arXiv:2306.00094}, 2023.

\bibitem{nlfem}
M.~Klar, C.~Vollmann, and V.~Schulz.
\newblock nlfem: {A} flexible 2d {FEM} {C}ode for {N}onlocal
  {C}onvection-{D}iffusion and {M}echanics, 2022.

\bibitem{LionsSchwarz2}
P.-L. Lions.
\newblock On the {S}chwarz alternating method. {II}.
\newblock {\em Domain decomposition methods}, 628:47--70, 1989.

\bibitem{LionsSchwarz3}
P.-L. Lions.
\newblock On the {S}chwarz alternating method. {III}: a variant for
  nonoverlapping subdomains.
\newblock In {\em Third international symposium on domain decomposition methods
  for partial differential equations}, volume~6, pages 202--223. SIAM
  Philadelphia, 1990.

\bibitem{LionsSchwarz1}
P.-L. Lions et~al.
\newblock On the {S}chwarz alternating method. {I}.
\newblock In {\em First international symposium on domain decomposition methods
  for partial differential equations}, volume~1, page~42. Paris, France, 1988.

\bibitem{mathewDD}
T.~Mathew.
\newblock {\em Domain decomposition methods for the numerical solution of
  partial differential equations}, volume~61.
\newblock Springer Science \& Business Media, 2008.

\bibitem{metzler2000random}
R.~Metzler and J.~Klafter.
\newblock The random walk's guide to anomalous diffusion: a fractional dynamics
  approach.
\newblock {\em Physics reports}, 339(1):1--77, 2000.

\bibitem{saad}
Y.~Saad.
\newblock {\em Iterative methods for sparse linear systems}.
\newblock SIAM, 2003.

\bibitem{shape_paper}
M.~Schuster, C.~Vollmann, and V.~Schulz.
\newblock Shape optimization for interface identification in nonlocal models,
  2022.

\bibitem{schwarz1869}
H.~A. Schwarz.
\newblock Ueber einige {A}bbildungsaufgaben.
\newblock 1869.

\bibitem{schwarz1870}
H.~A. Schwarz.
\newblock {\em Ueber einen {G}renz{\"u}bergang durch alternirendes Verfahren}.
\newblock Z{\"u}rcher u. Furrer, 1870.

\bibitem{silling2000reformulation}
S.~A. Silling.
\newblock Reformulation of elasticity theory for discontinuities and long-range
  forces.
\newblock {\em Journal of the Mechanics and Physics of Solids}, 48(1):175--209,
  2000.

\bibitem{suzuki2022}
J.~L. Suzuki, M.~Gulian, M.~Zayernouri, and M.~D’Elia.
\newblock Fractional modeling in action: {A} survey of nonlocal models for
  subsurface transport, turbulent flows, and anomalous materials.
\newblock {\em Journal of Peridynamics and Nonlocal Modeling}, pages 1--68,
  2022.

\bibitem{toselli2004domain}
A.~Toselli and O.~Widlund.
\newblock {\em Domain decomposition methods-algorithms and theory}, volume~34.
\newblock Springer Science \& Business Media, 2004.

\bibitem{vollmann_diss}
C.~Vollmann.
\newblock {\em Nonlocal models with truncated interaction kernels - analysis,
  finite element methods and shape optimization}.
\newblock PhD thesis, Universit{\"a}t Trier, 2019.

\bibitem{vu_diss}
M.~Vu.
\newblock {\em The Nonlocal Neumann Problem}.
\newblock PhD thesis, Universit{\"a}t Trier, 2023.

\bibitem{xu2021feti}
X.~Xu, C.~Glusa, M.~D’Elia, and J.~T. Foster.
\newblock A {FETI} approach to domain decomposition for meshfree
  discretizations of nonlocal problems.
\newblock {\em Computer Methods in Applied Mechanics and Engineering},
  387:114148, 2021.

\end{thebibliography}
\end{document}